\newtheorem{thm}{Theorem}[section]
\newtheorem{lemma}[thm]{Lemma}
\theoremstyle{remark}
\theoremstyle{definition}
\newtheorem{definition}[thm]{Definition}
\newtheorem{assumption}[thm]{Assumption}
\numberwithin{equation}{section}
\newcommand{\dsp}{\displaystyle}
\newcommand\ackname{Acknowledgements}
\newenvironment{acknowledgements}{%
	\titlepage
	\null\vfil
	\@beginparpenalty\@lowpenalty
	\begin{center}%
		\bfseries \ackname
		\@endparpenalty\@M
\end{center}}%
{\par\vfil\null\endtitlepage}
\subjclass[2010]{35Q35,37K45,76B25,35B35}
\keywords{finite diole, point vortices, existence, instability, solitary water waves, spectrum}
\begin{document}
\title[Instability of finite dipole]{On the existence and instability of solitary water waves with a finite dipole}

\author{Hung Le}
\address{Department of Mathematics, University of Missouri, Columbia, MO 65211}
\email{hdlgw3@mail.missouri.edu}

\date{\today}

\begin{abstract}	
This paper considers the existence and stability properties of two-dimensional solitary waves traversing an  infinitely deep body of water.  We assume that above the water is vacuum, and that the waves are acted upon by gravity with surface tension effects on the air--water interface.  In particular, we study the case where there is a finite dipole in the bulk of the fluid, that is, the vorticity is a sum of two weighted $\delta$-functions.  Using an implicit function theorem argument, we construct a family of solitary waves solutions for this system that is exhaustive in a neighborhood of $0$.  Our main result is that this family is conditionally orbitally unstable.  This is proved using a modification of the Grillakis--Shatah--Strauss method recently introduced by Varholm, Wahl\'en, and Walsh.  
\end{abstract}

\maketitle

\section{Introduction}

This paper is motivated by the following simple experiment.  Imagine that a surface water wave passes over a thin submerged body.  Boundary layer effects may then produce so-called \emph{shed vortices} --- highly localized vortical regions in the object's wake.  A natural idealization for this phenomenon is a \emph{finite dipole}, which is a weak solution of the Euler equations whose vorticity $\omega$ consists of a pair of Dirac $\delta$-measures (called point vortices) of nearly opposite strength that are separated by a fixed distance.

Dipoles are used commonly in fluid dynamical models; see further discussion in subsection \ref{subsection History of the problem}.  It is well-known that, if the problem is posed in the plane, then there are exact (stable) solutions for which the pair of vortices translate in parallel at a fixed velocity.  Here, we wish to study the far more complicated situation where the dipole lies inside a water wave.  We prove that there exists traveling wave solutions to this system.  However, our main result shows that they are conditionally orbitally unstable.  Physically, this indicates that a pair of counter-rotating shed vortices moving with a wave will not persist over long periods of time.  For instance, they may approach and then breach the surface.

\subsection{Main equations}

For each time $t \geq 0$, let $\Omega_t \subset \mathbb{R}^2$ be the fluid domain: 
\[
	\Omega_t := \left\{ (x_1,x_2) \in \mathbb{R}^2: x_2 < \eta(t,x_1) \right\},
\]
where the a priori unknown function $\eta=\eta(t,x_1)$ describes the free surface between air and water.  The water wave with a finite dipole problem is as follows.

Let $v=v(t,\cdot):\Omega_t \to \mathbb{R}^2$ be the fluid velocity.  The vorticity $\omega = \omega(t,\cdot) :\Omega_t \to \mathbb{R}$ is the (scalar) curl of $v$.  For a finite dipole, we must have 
\begin{equation} \label{eqn:vorticity w}
	\omega := \partial_{x_2} v_1 - \partial_{x_1}v_2 =  -\epsilon\gamma_1 \delta_{\bar x} + \epsilon\gamma_2 \delta_{\bar y},
\end{equation}
in the sense of distributions.  Here $\bar x = \bar x(t)$ and $\bar y = \bar y(t)$ are the vortex centers, and $\epsilon\gamma_1$ and $-\epsilon\gamma_2$ are the strengths, respectively.  We require that $v$ is a weak solution of the incompressible Euler equations away from the two point vortices:
\begin{align} \label{incompressible Euler eqn}
	\begin{dcases}
	\partial_t v + (v \cdot \nabla)v + \nabla p + ge_2 = 0 \quad \text{in } \Omega_t\backslash\{ \bar x,\bar y \}, \\
	\nabla \cdot v = 0 \quad \text{in } \Omega_t.
	\end{dcases} 
\end{align}
We require that there is finite excess kinetic energy, which corresponds to $v(t,\cdot) \in L^1_{\mathrm{loc}}(\Omega_t) \cap L^2(\Omega_t \backslash N_t)$ for any open set $N_t \subset \Omega_t \setminus
\{ \bar x, \bar y \}$.  

On the free surface $S_t:=\partial\Omega_t$, we have the kinematic and dynamic boundary condition:
\begin{align} \label{kinematic eqn}
	\partial_t \eta = -\eta' v_1 + v_2, \qquad p = b \kappa \quad \text{on } S_t,
\end{align}
where primes indicate derivatives with respect to $x_1$, and $\kappa = \kappa(t,x_1)$ is the mean curvature of the surface
\[
	\kappa(t,x_1) = -\frac{\eta''(t,x_1)}{\langle \eta'(t,x_1) \rangle^3}.
\]
Here we are using the Japanese bracket notation: $\langle \cdot \rangle := \left( 1 + (\cdot)^2 \right)^\frac{1}{2}$.  Moreover,  the constant $b > 0$ is the coefficient of surface tension.  

Finally, the motion of the vortices is governed by the Kirchhoff--Helmholtz model \cite{Helmholtz1858,Kirchhoff1850}:
\begin{align} \label{eqn vortices motion}
	\begin{dcases}
	\partial_t\bar{x} = \left.\left( v - \frac{\gamma_1}{2\pi} \epsilon \nabla^\perp \log |x-\bar{x}| \right)\right\vert_{\bar{x}}, \\
	\partial_t\bar{y} = \left.\left( v + \frac{\gamma_2}{2\pi} \epsilon \nabla^\perp \log |x-\bar{y}| \right)\right\vert_{\bar{y}},
	\end{dcases}
\end{align}
with $\nabla^\perp := (-\partial_{x_2},\partial_{x_1})$.  This system mandates that the point vortices are transported by the irrotational part of the fluid velocity field, and also attract each other due to the opposite vortex strengths.

\subsection{Statement of main results}

We are interested in both showing the existence of solitary waves solutions to \eqref{eqn:vorticity w}--\eqref{eqn vortices motion} and determining their stability.  As long as the two point vortices are separated from the surface, the fluid velocity $v$ can be decomposed as
\begin{align} \label{fluid velocity decompose 2 point vortices}
	v = \nabla\Phi + \epsilon \nabla\Theta
\end{align}
in a neighborhood of $S_t$, where $\Phi$ is a harmonic function and $\Theta$ represents the influence of the dipole.  Note that $\Theta$ can be written explicitly in terms of $\bar x$ and $\bar y$. To determine $v$, it is enough to know $\eta$ and the restriction of $\Phi$ to the surface $S_t$:
\begin{equation} \label{defn varphi}
	\varphi = \varphi(t,x_1) := \Phi\left( t, x_1, \eta(t,x_1) \right).
\end{equation}

For the steady problem, we look for solutions of the form
\[
	\eta = \eta^c(x_1-ct), \quad
	\varphi = \varphi^c(x_1-ct), \quad
	\bar{x} = cte_1 + (-a+\rho)e_2, \quad
	\bar{y} = cte_1 + (-a-\rho)e_2,
\]
where $(\eta^c, \varphi^c)$ are time-independent and spatially localized.  Specifically, we work in the space
\begin{equation} \label{space X for existence}
	(\eta, \varphi, a, \rho) \in X = X_1 \times X_2 \times X_3 \times X_4 := H^k_e(\mathbb{R}) \times \left( \dot{H}^{k-1}_o(\mathbb{R}) \cap \dot{H}^{1/2}_o(\mathbb{R}) \right) \times \mathbb{R} \times \mathbb{R}
\end{equation}
with
\[
	H^k_e(\mathbb{R}) := \{ f \in H^k(\mathbb{R}): f \text{ is even in } x_1 \},
	\quad
	H^k_o(\mathbb{R}) := \{ f \in H^k(\mathbb{R}): f \text{ is odd in } x_1 \},
\]
and let $\dot{H}^k_o(\mathbb{R})$ be the corresponding homogeneous space.   Then our first result is the existence of traveling capillary-gravity water waves with a finite dipole.  This theorem is an analogue of the work of Varholm on the water wave problem with point vortices in finite depth \cite{Varholm2016}.

\begin{thm}[Existence] 
\label{Theorem existence waves 2 point votices phantom diff vor str}
	
Let 
\[
	\bar x(t) = cte_1 + (-a_0+\rho_0)e_2, \qquad
	\bar y(t) = cte_1 + (-a_0-\rho_0)e_2.
\]
Then for every $k > \frac{3}{2}$, $a_0 \in (0,\infty)$, $\rho_0 \in (0,a_0)$, $\gamma_1^0 > 0$, and $\gamma_2^0 > 0$ subject to the compatibility condition
\begin{align} \label{eqn:compatibility condition}
	\gamma_2^0 = \frac{a_0^3+\rho_0^3}{a_0^3-\rho_0^3}\gamma_1^0,
\end{align}
there exists $\epsilon_1 > 0$, $c_1 > 0$, $\gamma_1^1>0$, $\gamma_2^1>0$, and $C^1$ family of traveling water wave with a finite dipole:
\begin{align*}
	\mathscr{C}_{\mathrm{loc}} &= \{ (\epsilon,c,\gamma_1,\gamma_2,\eta(\epsilon,c,\gamma_1,\gamma_2),\varphi(\epsilon,c,\gamma_1,\gamma_2),a(\epsilon,c,\gamma_1,\gamma_2),\rho(\epsilon,c,\gamma_1,\gamma_2)): \\
	&\hspace{2in} |\epsilon|<\epsilon_1, |c-c_0|<c_1, |\gamma_1-\gamma_1^0|<\gamma_1^1, |\gamma_2-\gamma_2^0|<\gamma_2^1 \} \\
	& \subset \mathbb{R} \times \mathbb{R} \times \mathbb{R} \times \mathbb{R} \times X.
\end{align*}
\end{thm}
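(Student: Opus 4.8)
The plan is to recast the steady problem as the vanishing of a nonlinear operator $\mathcal{F}(\epsilon,c,\gamma_1,\gamma_2,\eta,\varphi,a,\rho) = 0$ acting between appropriate Banach spaces, and then apply the implicit function theorem at the trivial solution $\epsilon = 0$. First I would write out the traveling-wave equations: substituting the ansatz into the kinematic condition, the Bernoulli (dynamic) condition, and the Kirchhoff--Helmholtz equations \eqref{eqn vortices motion}, and using the decomposition \eqref{fluid velocity decompose 2 point vortices}, one obtains a system in which the unknowns are $(\eta,\varphi) \in X_1 \times X_2$ together with the two scalars $(a,\rho) \in X_3 \times X_4$, parametrized by $(\epsilon,c,\gamma_1,\gamma_2)$. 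The Dirichlet--Neumann operator associated to $\Omega$ should be used to express the normal velocity on $S$ in terms of $\varphi$; I would record its mapping properties (analyticity, loss of one derivative) as a black box since $k > 3/2$ keeps everything in the algebra range. The term $\epsilon \nabla \Theta$ is explicit in $(a,\rho)$ and smooth as long as the vortices stay strictly below the surface, which holds in a neighborhood of the flat state because $a_0 > 0$.

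Next I would identify the solution at $\epsilon = 0$: with no dipole the fluid is irrotational, and the spatially localized even/odd solitary wave of the pure capillary-gravity problem in infinite depth is simply the flat state $\eta \equiv 0$, $\varphi \equiv 0$, while $a$ and $\rho$ remain free. The role of the compatibility condition \eqref{eqn:compatibility condition} is exactly to make the $\epsilon = 0$ configuration $(0,0,a_0,\rho_0)$ a genuine zero of the two Kirchhoff--Helmholtz component equations: at leading order the self-induced velocity of each point vortex (the $\log$ subtraction) together with the mutual interaction must balance the translation speed $c_0$, and \eqref{eqn:compatibility condition} is the resulting algebraic relation between $\gamma_1^0$, $\gamma_2^0$, $a_0$, $\rho_0$. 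I would check this by a direct computation with the explicit dipole stream function, which also pins down $c_0 = c_0(a_0,\rho_0,\gamma_1^0)$.

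The heart of the argument is the invertibility of the linearized operator $D_{(\eta,\varphi,a,\rho)}\mathcal{F}$ at this trivial point. Here the system decouples favorably: linearizing the water-wave part at the flat irrotational state gives the standard capillary-gravity dispersive operator, which on the even/odd subspaces $H^k_e \times (\dot H^{k-1}_o \cap \dot H^{1/2}_o)$ is an isomorphism onto its target precisely because surface tension $b > 0$ removes the zero mode (the symbol $b|\xi|^3 + g|\xi| - c_0^2 \xi^2$ type expression is bounded below away from zero on the relevant space — this is where the choice of homogeneous Sobolev spaces in \eqref{space X for existence} is essential). Linearizing the two vortex equations in $(a,\rho)$ produces a $2 \times 2$ matrix whose determinant is nonzero again by virtue of \eqref{eqn:compatibility condition} being a transversal (non-degenerate) crossing; the off-diagonal coupling to $(\eta,\varphi)$ vanishes at $\epsilon = 0$ since those terms carry a prefactor $\epsilon$. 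Thus the full linearization is block triangular with invertible diagonal blocks, hence invertible.

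With these ingredients the implicit function theorem yields a $C^1$ (indeed real-analytic, if one tracks the regularity of the Dirichlet--Neumann operator) map $(\epsilon,c,\gamma_1,\gamma_2) \mapsto (\eta,\varphi,a,\rho)$ on a neighborhood of $(0,c_0,\gamma_1^0,\gamma_2^0)$, and restricting to a small enough box gives the family $\mathscr{C}_{\mathrm{loc}}$ with the stated parameter ranges; exhaustiveness near $0$ is the local uniqueness part of the implicit function theorem. I expect the main obstacle to be bookkeeping rather than conceptual: carefully setting up the functional-analytic framework so that the dipole potential $\Theta$ and the Dirichlet--Neumann operator act continuously on the chosen scale of spaces, and verifying the non-degeneracy of the $2\times 2$ vortex block — this last computation is where \eqref{eqn:compatibility condition} does real work and must be done explicitly. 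The infinite-depth setting (as opposed to Varholm's finite-depth case \cite{Varholm2016}) means the homogeneous space $\dot H^{1/2}_o$ must be threaded through all the estimates, but it does not change the structure of the argument.
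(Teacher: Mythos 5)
Your overall strategy---recast the steady problem as $\mathcal{F}=0$, identify the trivial solution at $\epsilon=0$, exploit the block-triangular structure of the linearization, and invoke the implicit function theorem---is exactly the paper's. But there is a concrete gap in the setup: as literally written, with unknowns $(\eta,\varphi,a,\rho)$ and the equations taken at face value, the linearization of the two Kirchhoff--Helmholtz components with respect to $(a,\rho)$ at $\epsilon=0$ is the \emph{zero} matrix, because every term in those equations that depends on the vortex positions (the image and mutual-interaction velocities) carries an explicit factor of $\epsilon$, and the irrotational contribution $\nabla\Phi(\bar x)$ vanishes at the flat state. This is precisely why you observe that ``$a$ and $\rho$ remain free'' at $\epsilon=0$; that freedom is the degeneracy that would kill a naive application of the IFT. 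The paper resolves this by rescaling $\eta=\epsilon\tilde\eta$, $\psi=\epsilon\tilde\psi$, and crucially $c=\epsilon\tilde c$, which amounts to dividing the vortex equations by $\epsilon$; only after this normalization does the $2\times2$ block $\mathscr{T}$ become a genuine (and explicitly computable) nonzero matrix at $\epsilon=0$, with $\det\mathscr{T}<0$ following from the compatibility condition. Your phrase ``at leading order'' gestures at this, but your proposal never performs the division, and your description of the surface block is inconsistent with it: you write the symbol as $b|\xi|^3+g|\xi|-c_0^2\xi^2$ with $c_0$ treated as an $O(1)$ speed, whereas the balance you yourself describe forces $c_0=O(\epsilon)$, so that in the rescaled linearization the advection terms drop out entirely and the diagonal blocks are simply $g-b\partial_{x_1}^2$ on $H^k_e$ and $\partial_{x_1}$ on $\dot H^{k-1}_o\cap\dot H^{1/2}_o$ --- no dispersion-relation analysis is needed. (As a side point, the symbol you wrote is not ``bounded below away from zero'': it vanishes as $\xi\to 0$; the invertibility on the homogeneous odd/even spaces has to be argued through the choice of target space, not through a uniform lower bound.)

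Once the rescaling is in place, the rest of your outline matches the paper: the trivial solution exists precisely when \eqref{eqn:compatibility condition} holds (it is the condition that a single $\tilde c_0$ satisfies both rescaled vortex equations), the linearization is lower block triangular with the surface blocks diagonal and invertible, and the only remaining work is the explicit computation of $\det\mathscr{T}$, which you correctly flag as the place where the compatibility condition ``does real work.'' The paper carries out that computation and finds $\det\mathscr{T}=-\tfrac{\gamma_1^2}{16\pi^2}\cdot\tfrac{6(a_0^4-a_0^2\rho_0^2+\rho_0^4)}{(a_0+\rho_0)(a_0-\rho_0)^3(a_0^2+a_0\rho_0+\rho_0^2)^2}<0$; deferring it is acceptable in a proposal, but it cannot be replaced by an appeal to ``transversality'' since there is no a priori reason the determinant could not vanish for special $(a_0,\rho_0)$.
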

Due to the variational structure of the problem, it is most natural to fix $\epsilon$, $\gamma_1$, $\gamma_2$, and consider the curve in $\mathscr{C}_{\mathrm{loc}}$ that results from varying the wave speed $c$.  This gives rise to a one-parameter family of solitary waves indexed by the wave speed:
\[
	U_c := (\eta(c),\varphi(c),\bar x(c),\bar y(c)).
\]

The compatibility condition \eqref{eqn:compatibility condition} implies that the lower vortex at $\bar{y}$ must have a greater strength than the upper vortex at $\bar{x}$, that is, $\gamma_2 > \gamma_1$ for $0<\rho_0<a_0$.  This is a consequence of the fact that $\bar{x}$ is closer to the free surface $S_t$ and is therefore influenced by it more strongly.  In contrast to finite dipoles in $\mathbb{R}^2$, which must have a total vorticity of $0$, the interaction with the wave in fact forces the point vortices to have slightly unmatched strengths.  It is worth mentioning that the compatibility condition \eqref{eqn:compatibility condition} is not artificial.  Indeed, as the family $\mathscr{C}_{\mathrm{loc}}$ is exhaustive in a neighborhood of $0$ in the space $X$, it must hold for any sufficiently small-amplitude, slow moving waves with even symmetry.  

Returning to the time-dependent problem, we introduce two important spaces.  Let
\begin{equation} \label{X space}
	\mathbb{X} := \mathbb{X}_1 \times \mathbb{X}_2 \times \mathbb{X}_3 \times \mathbb{X}_4 := H^1(\mathbb{R}) \times \dot{H}^{1/2}(\mathbb{R}) \times \mathbb{R}^2 \times \mathbb{R}^2,
\end{equation}
and set
\begin{equation} \label{W space}
	\mathbb{W} := \mathbb{W}_1 \times \mathbb{W}_2 \times \mathbb{W}_3 \times \mathbb{W}_4 := H^{3+}(\mathbb{R}) \times \left( \dot{H}^{5/2+}(\mathbb{R}) \cap \dot{H}^{1/2}(\mathbb{R}) \right) \times \mathbb{R}^2 \times \mathbb{R}^2,
\end{equation}
where $H^{k+}$ means $H^{k+s}$ for some fixed $0 < s \ll 1$.  We think of $\mathbb{W}$ as the \emph{well-posedness} space for \eqref{eqn:vorticity w}--\eqref{eqn vortices motion}.  A local well-posedness result for irrotational capillary-gravity water waves with this degree of regularity was proved by Alazard, Burq, and Zuily \cite{Alazard_Burq_Zuily2011}.  Very recently, Su \cite{Su2018} obtained local well-posedness for \eqref{eqn:vorticity w}--\eqref{eqn vortices motion} in a somewhat smoother space than $\mathbb{W}$; our results will also hold in that setting with only minor modifications.  On the other hand, $\mathbb{X}$ is the natural \emph{energy space}.  This is discussed in more detail in subsection \ref{subsection Hamiltonian formulation}.  Finally, for the problem to be well-defined, the finite dipole must be away from the free surface, so we take
\begin{align} \label{O space}
	\mathcal{O} := \left\{ u \in \mathbb{X}: \bar x_2 < \eta(\bar x_1) < -\bar x_2, \quad \bar y_2 < \eta(\bar y_1) < -\bar y_2, \quad \bar x \ne \bar y \right\}.
\end{align}

To state the main result, we introduce some terminology.  First, observe that the entire system is invariant under the one-parameter affine symmetry group $T(s):\mathbb{X} \to \mathbb{X}$ defined by
\begin{equation} \label{defn_symmetry_group_T}
	T(s)u := T(s)(\eta, \varphi, \bar{x}, \bar{y})^T = (\eta(\cdot - s), \varphi(\cdot - s), \bar{x} + se_1, \bar{y} + se_1)^T.
\end{equation}
This suggests that stability or instability should be understood modulo $T(s)$.   With that in mind, for each $\rho>0$, we define the tubular neighborhood
\[
	\mathcal{U}_\rho := \left\{ u \in \mathcal{O}: \inf_{s\in\mathbb{R}} \|T(s)U_c-u\|_{\mathbb{W}} < \rho \right\}.
\]
\begin{definition}
We say $U_c$ is \emph{orbitally unstable} provided that there is a $\nu_0 > 0$ such that for every $0 < \nu < \nu_0$ there exists initial data in $\mathcal{U}_{\nu}$ whose corresponding solution exits $\mathcal{U}_{\nu_0}$ in finite time.
\end{definition}
Our main theorem is as follows.

\begin{thm}[Instability] \label{Theorem Stability fixed dipole}
Assume that \eqref{eqn:vorticity w}--\eqref{eqn vortices motion} is locally well-posed in $\mathbb{W}$ in the sense of Assumption~\ref{Assumption local existence}.  For any $\epsilon \ne 0$ sufficiently small, the corresponding family of solitary capillary-gravity water waves with a finite dipole $U_c$ furnished by Theorem~\ref{Theorem existence waves 2 point votices phantom diff vor str} is conditionally orbitally unstable.  
\end{thm}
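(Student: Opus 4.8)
The plan is to realize the time-dependent problem \eqref{eqn:vorticity w}--\eqref{eqn vortices motion} as an infinite-dimensional Hamiltonian system on the energy space $\mathbb{X}$, and then to apply the abstract conditional instability theorem of Varholm, Wahl\'en, and Walsh. One writes the dynamics in the form $\partial_t u = J(u)\,\mathrm{d}E(u)$, where $E$ is the conserved excess energy and $J(u)$ is a state-dependent, weakly nondegenerate Poisson map, degenerate along the finite-dimensional vortex variables $(\bar x,\bar y)$; this degeneracy is precisely the feature for which the VWW modification was devised. The translation group $T(s)$ from \eqref{defn_symmetry_group_T} is generated by a conserved momentum $P$, which combines the usual surface contribution with the linear impulse of the two point vortices, so that each traveling wave $U_c$ from Theorem~\ref{Theorem existence waves 2 point votices phantom diff vor str} is a critical point of the augmented Hamiltonian $E_c := E - cP$. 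The VWW machinery then reduces orbital (in)stability modulo $T$ to two items: a spectral condition on the Hessian $\mathcal{L}_c := \mathrm{d}^2 E_c(U_c)$, and the sign of the scalar moment of instability $d(c) := E_c(U_c)$. When $\mathcal{L}_c$ has exactly one negative eigenvalue, a simple zero eigenvalue with eigenvector the generator $T'(0)U_c$, and the remainder of its spectrum bounded away from $0$, then $U_c$ is conditionally orbitally unstable exactly when $d''(c) < 0$.

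The first task is to verify the structural hypotheses of the abstract theorem: that $E$ and $P$ are $C^2$ on a neighbourhood of $U_c$ in the intermediate-regularity scale between $\mathbb{X}$ and $\mathbb{W}$ (after renormalizing the logarithmically divergent point-vortex self-energy), that the flow is continuous on $\mathbb{W}$ in the sense of Assumption~\ref{Assumption local existence}, that the symplectic and momentum relations hold, and that $c \mapsto U_c$ is $C^1$ into $\mathbb{X}$, which follows by restricting the curve $\mathscr{C}_{\mathrm{loc}}$ to fixed $(\epsilon,\gamma_1,\gamma_2)$. Because the vortices enter the state through only four real parameters, these checks largely amount to transcribing the explicit formulas for $E$, $P$, $J$, and $\Theta$ into this setting and differentiating.

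The heart of the argument is the analysis of $\mathcal{L}_c$, carried out perturbatively about the bifurcation value $\epsilon = 0$. At $\epsilon = 0$ the vortex strengths vanish and $\mathcal{L}_c$ block-diagonalizes into the flat-surface capillary-gravity Hessian on $(\eta,\varphi)$ and a finite-dimensional block on the vortex variables $(a,\rho)$; a Fourier computation shows that, for the slow wave speeds in play, the surface block is positive definite with a spectral gap, so the negative and zero modes of $\mathcal{L}_c$ are governed — after a Schur-complement reduction that absorbs the $O(\epsilon)$ surface--vortex coupling — by the finite-dimensional vortex block. One then shows that the reduced quadratic form has exactly one negative direction, namely the mode along which the dipole migrates toward the free surface, reflecting that the submerged equilibrium depth $a_0$ is an energy maximum in that variable once the compatibility condition \eqref{eqn:compatibility condition} is imposed, while the $\rho$-direction and cross terms stay net positive. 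Simultaneously one computes $d(c)$: from $\mathrm{d}E_c(U_c) = 0$ one gets $d'(c) = -P(U_c)$, hence $d''(c) = -\tfrac{d}{dc}P(U_c)$, which is expanded using the $\epsilon$-expansions of $(\eta,\varphi,a,\rho)$ from Theorem~\ref{Theorem existence waves 2 point votices phantom diff vor str}; the leading term is $O(\epsilon^2)$, in particular even in $\epsilon$ so that its sign is the same for both signs of $\epsilon$, and one shows it is negative. With $n(\mathcal{L}_c) = 1$, simple translational kernel, a uniform spectral gap, and $d''(c) < 0$, the abstract theorem yields the conclusion.

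The main obstacle is the spectral step. The operator $\mathcal{L}_c$ is not self-adjoint relative to any single Hilbert structure — it couples the nonlocal Dirichlet--Neumann operator on the surface to the degenerately symplectic finite-dimensional vortex dynamics — so care is needed both in defining the negative index $n(\mathcal{L}_c)$ within the VWW formalism and in propagating the $\epsilon = 0$ picture to $\epsilon \ne 0$: one must control the $O(\epsilon)$ coupling uniformly in $\epsilon$ and exclude any eigenvalue crossing zero beyond the protected translational mode. Pinning down the sign of the $O(\epsilon^2)$ coefficient in $d''(c)$ is delicate but, with the perturbative expansions already furnished by the existence theorem, ultimately a finite computation.
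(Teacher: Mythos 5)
Your overall route is the one the paper takes: recast \eqref{eqn:vorticity w}--\eqref{eqn vortices motion} as $\partial_t u = J(u)DE(u)$ with a state-dependent, non-surjective Poisson map, introduce the momentum $P$ generating the translation group, verify the Varholm--Wahl\'en--Walsh hypotheses, analyze the Hessian of $E_c = E - cP$ perturbatively in $\epsilon$ (eliminating the surface potential so that the negative/zero modes are controlled by a finite-dimensional vortex block), and close with $d''(c) = -\partial_c P(U_c) < 0$. The paper performs the elimination via Mielke's augmented potential $V_c(\eta,\bar x,\bar y) = \min_\varphi E_c$, which is the Schur complement you describe, and the $d''(c)$ computation is indeed the finite calculation you anticipate, its sign coming from $\det\mathscr{T}<0$.

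However, your identification of the negative direction in the spectral step is wrong, and the heuristic you offer to justify the Morse index would fail if pursued. After the reduction and the $\epsilon$-expansion, the vortex block of the Hessian is
\[
	\mathcal{A} = \frac{\epsilon^2}{4\pi}
	\begin{pmatrix}
	-\alpha & 0                 & \alpha  & 0 \\
	0       & \delta_1 + \alpha & 0       & -\beta \\
	\alpha  & 0                 & -\alpha & 0 \\
	0       & -\beta            & 0       & \delta_2 + \alpha
	\end{pmatrix}
\]
acting on $(\dot{\bar x}_1,\dot{\bar x}_2,\dot{\bar y}_1,\dot{\bar y}_2)$, with $\alpha = \tfrac{\gamma_1\gamma_2}{2}(\rho^{-2}-a^{-2})>0$. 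The \emph{horizontal} sub-block has eigenvalues $0$ (common translation, the protected symmetry mode) and $-2\alpha<0$ (antisymmetric horizontal displacement of the two vortices), while the \emph{vertical} sub-block --- which is exactly where your depth variable $a$ and half-separation $\rho$ live --- is positive definite: its trace is $2\alpha+\delta_1+\delta_2>0$ and its determinant $\alpha^2+\alpha\delta_1+\alpha\delta_2+\delta_1\delta_2-\beta^2$ is positive precisely because of the compatibility condition \eqref{eqn:compatibility condition}. So the submerged equilibrium depth $a_0$ is \emph{not} an energy maximum of the augmented potential, and the unique unstable direction is a horizontal shearing of the dipole, not vertical migration toward the surface. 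Your plan of confirming the index by showing the energy is maximal in the depth variable (with ``the $\rho$-direction and cross terms net positive'') would come out the wrong way in both blocks. The Morse index is still $1$, so the theorem survives, but the step as you propose to establish it does not go through; you must locate the negative eigenvalue in the horizontal block and use the compatibility condition to rule out a second negative eigenvalue in the vertical one.
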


One physical interpretation for this is that, while we can construct steady configurations of counter rotating vortices moving in parallel through a water wave, these will not tend to persist over long periods of time.  Instead, we expect them to migrate to the surface of the water, fail to keep pace with the surface wave, or otherwise destabilize.  Moreover, this result covers all sufficiently small amplitude, wave speed, and vortex strength waves with even symmetry because $\mathscr{C}_{\mathrm{loc}}$ comprises also such waves near $0$ in $X$.

\subsection{History of the problem}
\label{subsection History of the problem}

The study of point vortices was initiated by Helmholtz \cite{Helmholtz1858} and Kirchhoff \cite{Kirchhoff1876}, who independently developed the model \eqref{eqn vortices motion}.  Since then, there has been extensive research on this subject.  The majority of this work concerns vortices in fixed fluid domains.  For instance, Love found a condition under which the motion of two pairs of vortices may be periodic \cite{Love1893-94} and investigated the stability of Kirchhoff's elliptic vortex \cite{Love1893-94Stability}.  Aref--Pomphrev \cite{Aref_Pomphrev1982} and Aref--Eckhardt \cite{Eckhardt_Aref1988} examined the chaotic behavior of the system of two pairs of vortices.  Wan \cite{Wan1988} proved the existence of steady concentrated vortex patches near the system of non-degenerate steady point vortices in the plane and on a bounded domain.
Marchioro and Pulvirenti \cite{Marchioro_Pulvirenti1993} later justified the connection between the incompressible Euler equation \eqref{incompressible Euler eqn} and the Kirchhoff--Helmholtz model \eqref{eqn vortices motion}.  Aref and Newton gave a thorough review of the results for $N$-vortex problem in the plane \cite{Aref2007,Newton2001} or on the surface of the sphere \cite{Newton2001}.  Recently, Smets--Van Schaftingen \cite{Smets_VanSchaftingen2010} and Cao--Liu--Wei \cite{Cao_Liu_Wei2014} studied the existence of solutions to the point vortex problem in a bounded domain using either a variational or Lyapunov-Schmidt reduction approach.  Kanso, Newton, and Tchieu also used the finite dipole as a model for fish schoolings \cite{Tchieu_Kanso_Newton2012}.  They examined the formation of multi-pole systems, discussed their stability, and compared the model against the real world scenario.  Point vortex models can also be used in studies of atmosphere and oceans \cite{Aref_Stremler2001}.

When a dipole is placed inside a water wave, which is the case in this paper, investigating existence and stability of solutions is much more involved mathematically as it requires developing an understanding of the interaction between the motion of the vortices and the free surface.  Nonetheless, there have been a sizable number of studies in this regime.  The first rigorous existence theory for steady solutions was given by Filippov \cite{Filippov1961} and Ter-Krikorov \cite{Ter-Krikorov1958}, who investigated the finite-depth regime neglecting surface tension.  Later, Shatah, Walsh, and Zeng constructed a family of traveling capillary gravity waves in infinite depth water with a single point vortex \cite{Shatah_Walsh_Zeng2013}.  Using a similar method, Varholm obtained analogues for capillary-gravity waves with one or more vortices in finite depth \cite{Varholm2016}.  Our existence theory follows in large part from the techniques in these two papers.

Our main source of inspiration is the recent paper by Varholm, Wahl\'en, and Walsh \cite{Varholm_Wahlen_Walsh2018} that proves the orbital stability of traveling capillary gravity waves with a single point vortex.  As we explain below, we will adopt a similar methodology.  However, the dipole turns out to be significantly more difficult to analyze at a technical level.  It is also of considerable importance to applications, as described above.  

It is well-known in the physics literature that the governing equations for water waves with submerged point vortices have a Hamiltonian structure.  Rouhi and Wright gave the formulation for the motion of vortices in the presence of a free surface in two and three dimensions \cite{Rouhi_Wright1993}.  A similar formulation was later given by Zakharov \cite{Zakharov1968}.  

There have also been a number of numerical results about vortex pairs in a fluid.  The closest to the current problem is the recent paper of Curtis, Carter, and Kalisch \cite{Curtis_Carter_Kalisch2018}, who studied how constant vorticity shear profile affects the motion of the particles both at and beneath waves in infinitely deep water.  Many authors have looked at the related scenario where a submerged dipole is sent moving towards the free surface rather than moving with the wave; see, for example, \cite{Telste1989}, \cite{Hirsa_Tryggvason_Willmart_Yu1989}, \cite{Tyvand1990}.  In all of these papers, the authors found cases where the vortices are able to breach the upper boundary.  The exact opposite scenario was considered by Su \cite{Su2018}, who proved that if a dipole initially moving away from the surface, the solution will persist over a long time scale.  This is in stark contrast to the present paper where we ask the dipole to move with the wave.  

\subsection{Plan of the article}

This paper contains two main sections.  In Section \ref{Section existence theory}, we show the existence of traveling capillary-gravity waves with a finite dipole.  This follows from an implicit function theorem argument in the spirit of Varholm \cite{Varholm2016} and Shatah, Walsh, and Zeng \cite{Shatah_Walsh_Zeng2013}.  Then, in Section \ref{Section stability theory}, we prove that these waves are orbitally unstable.  

We first establish that \eqref{eqn:vorticity w}--\eqref{eqn vortices motion} can be formulated as an infinite-dimensional Hamiltonian system of the form
\[
	\frac{\mathrm{d}u}{\mathrm{d}t} = J(u) DE(u)
\]
with $J$ being the Poisson map and $E$ the energy functional.  This is similar but distinct from the version due to Rouhi and Wright \cite{Rouhi_Wright1993}.  We offer a rigorous derivation in Theorem~\ref{Theorem Hamiltonian formulation}.  

In two seminal papers \cite{Grillakis_Shatah_Strauss1987,Grillakis_Shatah_Strauss1990},  Grillakis, Shatah, and Strauss provided a fairly simple method for determining the stability or instability of traveling wave solutions to systems of this form that are invariant under a continuous symmetry group.  Among the hypotheses of this theory are that the Poisson map $J$ is invertible, and that the initial value problem is globally well-posed in time.  Unfortunately, our $J$ is state-dependent and not surjective.  Moreover, we do not expect the problem to be well-posed in the energy space.  

In this paper, we will use a recent variant of the GSS method developed by  Varholm, Wahl\'en, and Walsh \cite{Varholm_Wahlen_Walsh2018}.  Among other improvements, this machinery permits $J$ to have merely a dense range, and also allow for a mismatch between the space where the problem is well-posed and the natural energy space.  In the present context, the latter point relates to the fact that $\mathbb{W} \nsubseteqq \mathbb{X}$.  As one of the hypotheses, we must compute the spectrum of the second variation of the augmented Hamiltonian defined in subsection~\ref{subsection spectrum}.  In particular, we show that it has a Morse index of $1$.  

For the convenience of the reader, several appendices have been included.  Appendix~\ref{Appendix Varholm--Wahlen--Walsh instability theory} contains a summary of the instability theory developed by Varholm--Wahl\'en--Walsh \cite{Varholm_Wahlen_Walsh2018}.  We also derive steady and unsteady equations for the velocity potential and stream functions in Appendix~\ref{Appendix steady and unsteady equations}.  Finally, in Appendix~\ref{Appendix variations of E and P}, we recorded the variations of the energy and momentum functional.

\section{Existence theory} \label{Section existence theory}

This section is devoted to proving the existence of traveling capillary-gravity water waves with a finite dipole.  We will adopt a methodology introduced by Varholm \cite{Varholm2016} and Shatah, Walsh, and Zeng \cite{Shatah_Walsh_Zeng2013}.  The first step is to reformulate \eqref{eqn:vorticity w}--\eqref{eqn vortices motion} in the spirit of Zakharov \cite{Zakharov1968}, and Craig and Sulem \cite{Craig_Sulem1993}.  This entails reducing the problem to a nonlocal system involving only surface variables.  

Recalling the splitting for $v$ in \eqref{fluid velocity decompose 2 point vortices}, we take $\Theta = \Theta_1 + \Theta_2 + \Theta_1^* + \Theta_2^*$ with
\begin{align} \label{Theta functions defn}
	\begin{split}
	\Theta_1(x) = -\frac{\gamma_1}{2\pi} \arctan \left( \frac{x_1-\bar{x}_1}{|x-\bar x| + x_2-\bar x_2} \right),
	\quad
	&\Theta_1^*(x) = \frac{\gamma_1}{2\pi} \arctan \left( \frac{x_1-\bar{x}_1^*}{|x-\bar x^*| + x_2-\bar{x}_2^*} \right),
	\\
	\Theta_2(x) = \frac{\gamma_2}{2\pi} \arctan \left( \frac{x_1-\bar{y}_1}{|x-\bar y| + x_2-\bar y_2} \right), 
	\quad
	&\Theta_2^*(x) = -\frac{\gamma_2}{2\pi} \arctan \left( \frac{x_1-\bar{y}_1^*}{|x-\bar y^*| + x_2-\bar{y}_2^*} \right),
	\end{split}
\end{align}
and $\bar{x}^* = (\bar{x}_1,-\bar{x}_2)$ and $\bar{y}^* = (\bar{y}_1,-\bar{y}_2)$ being the reflection of the two point vortices over the $x_1$-axis.  This corresponds to making a branch cut straight down from the vortex centers. It is easy to see that $v \in L^2 + L^2$ in the complement of any neighborhood of $\bar x$ and $\bar y$. 

It is often convenient to work with the harmonic conjugates of these functions.  In particular, let $\Gamma$ be the harmonic conjugate of $\Theta$, that is $\nabla\Theta=\nabla^{\perp}\Gamma$.  Then we have $\Gamma = \Gamma_1 + \Gamma_2 + \Gamma_1^* + \Gamma_2^*$, where
\begin{align*}
	\Gamma_1(x) &= \frac{\gamma_1}{2\pi} \log |x - \bar{x}|,
	\qquad
	\Gamma_2(x) = -\frac{\gamma_2}{2\pi} \log |x - \bar{y}|,
	\\
	\Gamma_1^*(x) &= -\frac{\gamma_1}{2\pi} \log |x - \bar{x}^*|,
	\qquad
	\Gamma_2^*(x) = \frac{\gamma_2}{2\pi} \log |x - \bar{y}^*|.
\end{align*}
We see that $-\Delta \Gamma_1 = -\gamma_1\delta_{\bar{x}}$, $-\Delta \Gamma_2 = \gamma_2\delta_{\bar{y}}$, and $-\Delta \Gamma = -\gamma_1\delta_{\bar{x}} + \gamma_2\delta_{\bar{y}}$, and hence
\[
	-\epsilon \Delta \Gamma = \omega.
\]
Define
\[
	\Xi_1 := \Theta_1 - \Theta_1^*, \quad
	\Xi_2 := \Theta_2 - \Theta_2^*, \quad
	\Upsilon_1 := \Theta_1 + \Theta_1^*, \quad
	\Upsilon_2 := \Theta_2 + \Theta_2^*,
\]
so that $\Theta = \Upsilon_1 + \Upsilon_2$.  This will be convenient for computing $\partial_{\bar x} \Theta$.  Also, let $\Psi$ be the harmonic conjugate of $\Phi$, so that
\[
	v = \nabla^{\perp}\Psi + \epsilon\nabla^\perp\Gamma,
\]
and denote the restriction of $\Psi$ to the surface $S_t$ by
\begin{equation} \label{defn psi}
	\psi = \psi(t,x_1) := \Psi(t,x_1,\eta(t,x_1)).
\end{equation}

We represent the normal derivatives of these functions on the free surface using the Dirichlet--Neumann operator $\mathcal{G}(\eta): \dot{H}^{1/2}(\mathbb{R}) \cap \dot{H}^k(\mathbb{R}) \to \dot{H}^{k-1}(\mathbb{R})$ defined by
\begin{align} \label{Dirichlet Neumann operator}
	\mathcal{G}(\eta) \phi := (-\eta' \partial_{x_1}\phi_\mathcal{H} + \partial_{x_2}\phi_\mathcal{H})|_{S_t},
\end{align}
where $\phi_\mathcal{H} \in \dot{H}^{k+1/2}(\Omega)$ is the harmonic extension of $\phi$ to $\Omega_t$ determined uniquely by
\[
	\Delta \phi_\mathcal{H} = 0 \text{ in } \Omega, \quad \phi_\mathcal{H} = \phi \text{ on } S_t,
\]
and $k \ge 0$.  It is well known that for any $\eta \in H^{k_0}(\mathbb{R})$, $k_0 > 3/2$, $\mathcal{G}(\eta)$ is a bounded, invertible, and self-adjoint operator between these spaces when $k \in [1-k_0, k_0]$.  Moreover, the mapping $\eta \mapsto \mathcal{G}(\eta)$ is $C^\infty$ and $\mathcal{G}(0) = |\partial_{x_1}|$ (see, for example, the book by Lannes \cite{Lannes2013}).

Next, we rewrite the water wave problem as the following system for the unknowns $(\eta,\varphi,\bar x,\bar y)$:
\begin{equation} \label{equation eta phi x_bar y_bar}
	\begin{dcases}
	\partial_t\eta &= \mathcal{G}(\eta)\varphi + \epsilon\nabla_\perp\Theta,
	\\
	\partial_t\varphi &= -\frac{1}{2\langle\eta'\rangle^2} \left( (\varphi')^2 - 2\eta'\varphi'\mathcal{G}(\eta)\varphi - (\mathcal{G}(\eta)\varphi)^2 \right) - \epsilon \partial_t \Theta - \epsilon \varphi' \partial_{x_1}\Theta - \frac{\epsilon^2}{2} |\nabla\Theta|^2 \\
	& \quad - \eta + b \frac{\eta''}{\langle \eta' \rangle^3},
	\\
	\partial_t\bar{x} &= \nabla\Phi(\bar{x}) + \epsilon\nabla\Theta_1^*(\bar{x}) + \epsilon\nabla\Theta_2(\bar{x}) + \epsilon\nabla\Theta_2^*(\bar{x}),
	\\
	\partial_t\bar{y} &= \nabla\Phi(\bar{y}) + \epsilon\nabla\Theta_1(\bar{y}) + \epsilon\nabla\Theta_1^*(\bar{y}) + \epsilon\nabla\Theta_2^*(\bar{y}).
	\end{dcases}
\end{equation}
Recall that $\varphi = \Phi|_{S_t}$ as in \eqref{defn varphi}, and  describes the irrotational part of the velocity field.  Here we have made use of the differential operators
\begin{align} \label{defn nabla_perp nabla_top}
	\nabla_\perp := \left.\left( -\eta'\partial_{x_1} + \partial_{x_2} \right)\right|_{S_t},
	\quad
	\nabla_\top := \left.\left( \partial_{x_1} + \eta'\partial_{x_2} \right)\right|_{S_t},
\end{align}
which come naturally as we take derivatives of functions restricted to the free surface.

Note that in \eqref{equation eta phi x_bar y_bar}, the equation for $\partial_t\eta$ can be derived from the kinematic boundary condition, but now $\Theta$ appears as a forcing term.  We can see that the evolution of $\varphi$ is determined by the unsteady Bernoulli equation \eqref{unsteady velocity potential equation}.  Finally, the equations for $\partial_t\bar{x}$ and $\partial_t\bar{y}$ come from the Kirchhoff--Helmholtz model \eqref{eqn vortices motion}.  

Now we are prepared to prove the existence theorem.  As this is done in the steady frame, we will simply write $S:=S_t$ and $\Omega:=\Omega_t$.

\begin{proof}[Proof of Theorem~\ref{Theorem existence waves 2 point votices phantom diff vor str}]
For convenience, we prove this result using $\psi$, which immediately gives the stated theorem in terms of $\varphi$.  For traveling waves solutions of \eqref{equation eta phi x_bar y_bar}, we have
\[
	\eta = \eta(x_1-ct), \quad
	\psi = \psi(x_1-ct), \quad
	\partial_t \bar{x} = ce_1, \quad
	\partial_t \bar{y} = ce_1.
\]
First we rescale: 
\[
	\eta =: \epsilon \tilde{\eta}, \quad \psi =: \epsilon \widetilde{\psi}, \quad \Psi =: \epsilon \widetilde{\Psi}, \quad c =: \epsilon \tilde{c},
\]
so that the steady point vortex motion equations \eqref{eqn vortices motion} now become
\[
	\begin{dcases}
	-\widetilde\Psi_{x_2}(\bar{x}) - \Gamma_{2_{x_2}}(\bar{x}) - \Gamma_{1_{x_2}}^*(\bar{x}) - \Gamma_{2_{x_2}}^*(\bar{x}) = \tilde{c}, \\
	-\widetilde\Psi_{x_2}(\bar{y}) - \Gamma_{1_{x_2}}(\bar{y}) - \Gamma_{1_{x_2}}^*(\bar{y}) - \Gamma_{2_{x_2}}^*(\bar{y}) = \tilde{c}.
	\end{dcases}
\]
Then the problem can be expressed as the abstract operator equation
\[
	\mathcal{F}(\epsilon,\tilde{c},\gamma_1,\gamma_2; \tilde{\eta},\widetilde{\psi},a, \rho) = 0
\]
with $\mathcal{F} = \left( \mathcal{F}_1,\mathcal{F}_2,\mathcal{F}_3,\mathcal{F}_4 \right): \mathbb{R} \times \mathbb{R} \times \mathbb{R} \times \mathbb{R} \times X \rightarrow Y$ given by
\begin{align} \label{defn F 2 point vortices}
	\mathcal{F}_1 &:= \frac{\epsilon\tilde{c}}{1+(\epsilon\tilde\eta')^2} \Big( \tilde\psi' + \epsilon\tilde\eta' \mathcal{G}(\epsilon\tilde\eta)\tilde\psi \Big) + \epsilon \tilde c \Gamma_{x_2}|_S + \frac{\epsilon}{2(1+(\epsilon\tilde\eta')^2)} \Big( (\tilde\psi')^2 + (\mathcal{G}(\epsilon\tilde\eta)\tilde\psi)^2 \Big) \nonumber\\
	&\hspace{.5in} + \frac{\epsilon}{1+(\epsilon\tilde\eta')^2} \Big( \mathcal{G}(\epsilon\tilde{\eta})\tilde\psi \nabla_\perp\Gamma + \tilde\psi'\nabla_\top\Gamma \Big) + \frac{\epsilon}{2} |(\nabla\Gamma)|_S|^2 + \tilde{\eta} + \frac{b}{\epsilon} \kappa(\epsilon \tilde{\eta}), \nonumber
	\\
	\mathcal{F}_2 &:= \epsilon \tilde{c} \tilde{\eta}' + \widetilde{\psi}' + (1,\epsilon \tilde{\eta}')^T \cdot \nabla \Gamma,
	\\
	\mathcal{F}_3 &:= \tilde{c} + \left(\partial_{x_2}\tilde{\psi}_\mathcal{H}\right)(\bar{x}) + \Gamma_{2_{x_2}}(\bar{x}) + \Gamma_{1_{x_2}}^*(\bar{x}) + \Gamma_{2_{x_2}}^*(\bar{x}), \nonumber
	\\
	\mathcal{F}_4 &:= \tilde{c} + \left(\partial_{x_2}\tilde{\psi}_\mathcal{H}\right)(\bar{y}) + \Gamma_{1_{x_2}}(\bar{y}) + \Gamma_{1_{x_2}}^*(\bar{y}) + \Gamma_{2_{x_2}}^*(\bar{y}), \nonumber
\end{align}
where $\nabla \Gamma$ is evaluated at $x_2 = \epsilon \tilde{\eta}(x_1)$ and $X$ is defined by \eqref{space X for existence}.  We take
\begin{align*}
	Y &= Y_1 \times Y_2 \times Y_3 \times Y_4 := H^{k-2}_e(\mathbb{R}) \times \left( \dot{H}^{k-2}_o(\mathbb{R}) \cap \dot{H}^{-1/2}_o(\mathbb{R}) \right) \times \mathbb{R} \times \mathbb{R}
\end{align*}
for $k > \frac{3}{2}$ fixed.

It is clear that $\mathcal{F}(\epsilon_0,\tilde{c}_0,\gamma_1^0,\gamma_2^0;\widetilde{\eta}_0,\widetilde{\psi}_0,a_0,\rho_0) = 0$ with 
\begin{subequations} \label{zero set of F}
\begin{align} 
\begin{split}
	\epsilon_0 &= 0,  \\
	\tilde{c}_0 &= -\Gamma_{2_{x_2}}(0,-a_0+\rho_0) - \Gamma_{1_{x_2}}^*(0,-a_0+\rho_0) - \Gamma_{2_{x_2}}^*(0,-a_0+\rho_0) \\
	&= -\frac{\gamma_1}{4\pi(a_0-\rho_0)} + \frac{\gamma_2}{4\pi} \left( \frac{1}{a_0} + \frac{1}{\rho_0} \right), 
\end{split}
\end{align}
and
\begin{align}
	\tilde\eta_0 = 0, \qquad
	\tilde\psi_0 = -\Gamma(x_1,0) = 0,
\end{align}
\end{subequations}
and $\gamma_1^0, \gamma_2^0, a_0, \rho_0 \in \mathbb{R}$ if and only if the compatibility condition \eqref{eqn:compatibility condition} holds.  A simple computation shows that
\begin{align*}
	\mathscr{L} &:= \left( D_{\tilde{\eta}},D_{\tilde{\psi}},\partial_a, \partial_\rho \right) \mathcal{F} \left( 0,\tilde{c}_0,\gamma_1^0,\gamma_2^0;0,\tilde{\psi}_0,a_0,\rho_0 \right) \\
	&=\begin{pmatrix}
	g-\alpha^2 \partial_{x_1}^2 & 0 & 0 & 0 \\
	0 & \partial_{x_1} & 0 & 0 \\
	0 & \left( \partial_{x_2} \langle \mathcal{H}(0),\cdot \rangle \right)|_{(0,-a_0+\rho_0)} & - \frac{\gamma_1^0}{4\pi(a_0-\rho_0)^2} + \frac{\gamma_2^0}{4\pi a_0^2} & \frac{\gamma_1^0}{4\pi(a_0-\rho_0)^2} + \frac{\gamma_2^0}{4\pi\rho_0^2} \\
	0 & \left( \partial_{x_2} \langle \mathcal{H}(0),\cdot \rangle \right)|_{(0,-a_0-\rho_0)} & - \frac{\gamma_1^0}{4\pi a_0^2} + \frac{\gamma_2^0}{4\pi(a_0+\rho_0)^2} & \frac{\gamma_2^0}{4\pi(a_0+\rho_0)^2} + \frac{\gamma_1^0}{4\pi\rho_0^2} \\
	\end{pmatrix} \\
	&\in \mathcal{L}(X,Y).
\end{align*}
The invertibility of $\mathscr{L}$ is equivalent to the invertibility of the $2\times 2$ real sub-matrix:
\begin{align} \label{matrix T}
	\mathscr{T} := \begin{pmatrix}
	\dsp -\frac{\gamma_1^0}{4\pi(a_0-\rho_0)^2} + \frac{\gamma_2^0}{4\pi a_0^2} & \dsp \frac{\gamma_1^0}{4\pi(a_0-\rho_0)^2} + \frac{\gamma_2^0}{4\pi\rho_0^2} \\
	\dsp -\frac{\gamma_1^0}{4\pi a_0^2} + \frac{\gamma_2^0}{4\pi(a_0+\rho_0)^2} & \dsp \frac{\gamma_2^0}{4\pi(a_0+\rho_0)^2} + \frac{\gamma_1^0}{4\pi\rho_0^2}
	\end{pmatrix}.
\end{align}
By the compatibility condition \eqref{eqn:compatibility condition}, we have
\[
	\det\mathscr{T} = -\frac{\gamma_1^2}{16\pi^2} \frac{6(a_0^4 - a_0^2 \rho_0^2 + \rho_0^4)}{(a_0 + \rho_0)(a_0 - \rho_0)^3(a_0^2 + a_0 \rho_0 + \rho_0^2)^2} < 0.
\]
Thus, $\mathscr{L}$ is an isomorphism.  The Implicit Function Theorem then tells us that there exists a family $\mathcal{C}_{\mathrm{loc}}$ of solutions of the form 
\[
	\mathcal{F}(\epsilon,\tilde{c},\gamma_1,\gamma_2;\tilde{\eta}(\epsilon,\tilde{c},\gamma_1,\gamma_2),\tilde{\psi}(\epsilon,\tilde{c},\gamma_1,\gamma_2),a(\epsilon,\tilde{c},\gamma_1,\gamma_2),\rho(\epsilon,\tilde{c},\gamma_1,\gamma_2)) = 0
\]
for all $|\epsilon| \ll 1$, $|\tilde{c}-\tilde{c}_0| \ll 1$, $|\gamma_1-\gamma_1^0| \ll 1$, and $|\gamma_2-\gamma_2^0| \ll 1$.  Theorem~\ref{Theorem existence waves 2 point votices phantom diff vor str} has, therefore, been proved.  Again, the Implicit Function Theorem allows us to infer that $\mathscr{C}_{\mathrm{loc}}$ comprises all traveling wave solutions in a neighborhood of $0$ in $X$.  
\end{proof}

For the stability analysis, we rely on asymptotic information about the traveling waves constructed above.  Using implicit differentiation, one can readily compute that  
\begin{subequations} \label{asymptotic forms traveling wave soln Uc}
\begin{align} \begin{split}
	\eta(\epsilon,\tilde{c},\gamma_1,\gamma_2) &= -\epsilon^2 (g - b \partial_{x_1}^2)^{-1} \left[ \tilde{c}_0\Gamma_{x_2}(x_1,0) \right] \\
	&\qquad + O ( |\epsilon|^3 + |\epsilon||c-c_0|^2 + |\epsilon||\gamma_1-\gamma_1^0|^2 + |\epsilon||\gamma_2-\gamma_2^0|^2 ) \quad \text{ in } C^1(U;X_1), \\
	\psi(\epsilon,\tilde{c},\gamma_1,\gamma_2) &= O ( |\epsilon|^3 + |\epsilon||c-c_0|^2 + |\epsilon||\gamma_1-\gamma_1^0|^2 + |\epsilon||\gamma_2-\gamma_2^0|^2 ) \quad \text{ in } C^1(U;X_2),
\end{split} \end{align}
and
\begin{align} \begin{split}
	a(\epsilon,\tilde{c},\gamma_1,\gamma_2) &= a_0 + \epsilon a_{0100}(c-c_0) + a_{0010}(\gamma_1-\gamma_1^0) + a_{0001}(\gamma_2-\gamma_2^0) \\
	&\qquad + O ( |\epsilon|^2 + |c-c_0|^2 + |\gamma_1-\gamma_1^0|^2 + |\gamma_2-\gamma_2^0|^2 ) \quad \text{ in } C^1(U;\mathbb{R}), \\
	\rho(\epsilon,\tilde{c},\gamma_1,\gamma_2) &= \rho_0 + \epsilon \rho_{0100}(c-c_0) + \rho_{0010}(\gamma_1-\gamma_1^0) + \rho_{0001}(\gamma_2-\gamma_2^0) \\
	&\qquad + O ( |\epsilon|^2 + |c-c_0|^2 + |\gamma_1-\gamma_1^0|^2 + |\gamma_2-\gamma_2^0|^2 ) \quad \text{in } C^1(U;\mathbb{R}),
\end{split} \end{align}
\end{subequations}
where $\widetilde{c}_0$, $\mathscr{T}$ are given by \eqref{zero set of F}--\eqref{matrix T}, and $U = B_{\epsilon_1}(0) \times B_{c_1}(c_0) \times B_{\gamma_1^1}(\gamma_1^0) \times B_{\gamma_2^1}(\gamma_2^0)$.  The indices $0100$, $0010$, and $0001$ are variations at the point $(0,\tilde{c}_0, \gamma_1^0, \gamma_2^0)$ with respect to $\tilde{c}$, $\gamma_1$, and $\gamma_2$, respectively.  In particular,
\begin{align} \label{a_tilde c rho_tilde c 2 point vortices}
\begin{split}
	a_{0100} &= \frac{1}{\det\mathscr{T}} \left( -\frac{\gamma_2^0}{4\pi(a_0+\rho_0)^2} - \frac{\gamma_1^0}{4\pi\rho_0^2} + \frac{\gamma_1^0}{4\pi(a_0-\rho_0)^2} + \frac{\gamma_2^0}{4\pi\rho_0^2} \right), \\
	\rho_{0100} &= \frac{1}{\det\mathscr{T}} \left( \frac{\gamma_2^0}{4\pi(a_0+\rho_0)^2} - \frac{\gamma_1^0}{4\pi a_0^2} + \frac{\gamma_1^0}{4\pi(a_0-\rho_0)^2} - \frac{\gamma_2^0}{4\pi a_0^2} \right).
\end{split}
\end{align}

\section{Instability theory} \label{Section stability theory}

In this section, we show that the traveling waves constructed in Theorem~\ref{Theorem existence waves 2 point votices phantom diff vor str} are orbitally unstable.  To do so, we follow the general strategy of Varholm--Wahl\'en--Walsh \cite{Varholm_Wahlen_Walsh2018} which is an adaptation of the classical Grillakis--Shatah--Strauss method \cite{Grillakis_Shatah_Strauss1987,Grillakis_Shatah_Strauss1990}.  In subsection \ref{subsection Hamiltonian formulation}, we rewrite the equations of capillary-gravity waves with a finite dipole \eqref{eqn:vorticity w}--\eqref{eqn vortices motion} as a Hamiltonian system and give an explicit form for its energy and momentum.  Next, in subsections~\ref{subsection spectrum} we prove that the spectrum of the second variation of the augmented Hamiltonian has the required configuration.  This is done in the spirit of Mielke \cite{Mielke2001}.  Finally, in subsection \ref{subsection proof of stability (main) theorem}, we complete the proof of our main result by computing the second derivative of the moment of instability for small waves in this family.  

\subsection{Hamiltonian formulation}
\label{subsection Hamiltonian formulation}

We first show that the system of equations \eqref{equation eta phi x_bar y_bar} has a Hamiltonian structure in terms of the state variable $u=(\eta,\varphi,\bar{x},\bar{y})^T$.  Define the energy $E=E(u)$ to be
\begin{equation} \label{energy functional}
	E(u) := K(u) + V(u),
\end{equation}
where $K$ is the (excess) kinetic energy and $V$ is the (excess) potential energy. The submerged dipole does not affect the latter, and so we may take 
\begin{align} \label{potential energy V}
	V(u) := \int_\mathbb{R} \left( \frac{1}{2} g\eta^2 + b( \langle\eta'\rangle - 1) \right) \,\mathrm{d}x_1.
\end{align}
However, some care is needed in the deriving the correct expression for $K$.  Formally, we take the classical kinetic energy $\frac{1}{2}\int_{\Omega} |v|^2 \,\mathrm{d}x$, split $v$ according to \eqref{fluid velocity decompose 2 point vortices}, and then integrate by parts.  The Newtonian potentials in $\Gamma$ will naturally lead to singular terms; these we discard.  What results is the following: 
\begin{align*} 
\begin{split}
	K(u) &:= K_0(u) + \epsilon K_1(u) + \epsilon^2 K_2(u) \\
	&= \frac{1}{2} \int_\mathbb{R} \varphi \mathcal{G}(\eta) \varphi \,\mathrm{d}x_1 + \epsilon \int_\mathbb{R} \varphi \nabla_\perp \Theta \,\mathrm{d}x_1 + \epsilon^2 \left( \frac{1}{2} \int_\mathbb{R} \Theta\vert_{S_t} \nabla_\perp \Theta \,\mathrm{d}x_1 + \Gamma^* \right),  \\
	\Gamma^* &:= \frac{\gamma_1}{2} \Big( \Gamma_1^*(\bar{x}) + \Gamma_2(\bar{x}) + \Gamma_2^*(\bar{x}) \Big) 
	- \frac{\gamma_2}{2} \Big( \Gamma_1(\bar{y}) + \Gamma_1^*(\bar{y}) + \Gamma_2^*(\bar{y}) \Big).
\end{split}
\end{align*}
Note that $K_0 = \frac{1}{2} \int_\Omega |\nabla\Phi|^2 \,\mathrm{d}x$, and hence represents the kinetic energy contributed by the purely irrotational part of the velocity.  $K_1$ is the interaction between the irrotational and rotational parts, and $K_2$ is the kinetic energy attributed to the rotational part.  

Recall the energy space $\mathbb{X}$ was defined by \eqref{X space} and the well-posedness space $\mathbb{W}$ was defined by \eqref{W space}.  As $\mathbb{X}$ is a Hilbert space, it is isomorphic to its continuous dual $\mathbb{X}^*$, and the isomorphism $I : \mathbb{X} \to \mathbb{X}^*$ takes the form
\[
	I = (1-\partial_{x_1}^2, |\partial_{x_1}|, \mathrm{Id}_{\mathbb{R}^2}, \mathrm{Id}_{\mathbb{R}^2}),
\]
where $\mathrm{Id}_{\mathbb{R}^2}$ is the $2 \times 2$ identity matrix.  For the Dirichlet--Neumann operator in $E$ to be well-defined, we want a smoother space than $\mathbb{X}$.  For that, we choose
\begin{equation} \label{V space}
	\mathbb{V} := \mathbb{V}_1 \times \mathbb{V}_2 \times \mathbb{V}_3 \times \mathbb{V}_4 := H^{3/2+}(\mathbb{R}) \times \left( \dot{H}^{1+}(\mathbb{R}) \cap \dot{H}^{1/2}(\mathbb{R}) \right) \times \mathbb{R}^2 \times \mathbb{R}^2.
\end{equation}
From the definition of the energy in \eqref{energy functional}, we see that $E \in C^{\infty}(\mathcal{O} \cap \mathbb{V}; \mathbb{R})$.  Using the Gagliardo--Nirenberg interpolation inequality, we can confirm that the spaces $\mathbb{X}$, $\mathbb{V}$, and $\mathbb{W}$ satisfy Assumption~\ref{Assumption space}.  In particular, there are constants $C > 0$ and $\theta \in (0,1/4)$ such that 
\[
	\|u\|_{\mathbb{V}}^3 \le C \|u\|_{\mathbb{X}}^{2+\theta} \|u\|_{\mathbb{W}}^{1-\theta}.
\]

Next, consider the closed operator $\widehat{J}:\mathcal{D}(\widehat{J}) \subset \mathbb{X}^* \to \mathbb{X}$ defined by
\begin{equation} \label{defn J hat}
\widehat{J} :=
\begin{pmatrix}
0  & 1 & 0                					& 0 \\
-1 & 0 & 0                					& 0 \\
0  & 0 & (\epsilon\gamma_1)^{-1}\mathcal{J} & 0 \\
0  & 0 & 0                					& -(\epsilon\gamma_2)^{-1}\mathcal{J}
\end{pmatrix},
\end{equation}
where $\mathcal{J}$ is a $2 \times 2$ real matrix
\[
	\mathcal{J} = \begin{pmatrix} 0 & -1 \\ 1 & 0 \end{pmatrix}
\]
and 
\[
	\mathcal{D}(\widehat{J}) = (H^{-1}(\mathbb{R}) \cap \dot{H}^{1/2}(\mathbb{R})) \times (H^1(\mathbb{R}) \cap \dot{H}^{-1/2}(\mathbb{R})) \times \mathbb{R}^2 \times \mathbb{R}^2.  
\]
Let $B \in C^1(\mathcal{O};\mathrm{Lin}(\mathbb{X})) \cap C^1(\mathcal{O} \cap \mathbb{W}; \mathrm{Lin}(\mathbb{W}))$ be defined by
\begin{equation} \label{defn B(u)}
	B(u) := \mathrm{Id}_{\mathbb{X}} + Z(u),
\end{equation}
where 
\[
	Z(u)\dot{w} :=
	\begin{pmatrix}
	0  &  0  &  0  &  0 \\
	-\epsilon(\gamma_1)^{-1} (\mathcal{J}\xi|_{S_t})^T  &  \epsilon(\gamma_2)^{-1} (\mathcal{J}\zeta|_{S_t})^T  &  \epsilon\xi^T|_{S_t}  &  \epsilon\zeta^T|_{S_t} \\
	\gamma_1^{-1} \mathcal{J}  &  0  &  0  &  0 \\
	0  & -(\gamma_2)^{-1} \mathcal{J} &  0  &  0
	\end{pmatrix}
	\begin{bmatrix}
		\langle \xi|_{S_t}, \dot{\eta} \rangle \\
		\langle \zeta|_{S_t}, \dot{\eta} \rangle \\
		\dot{\bar x} \\
		\dot{\bar y} 
	\end{bmatrix}
\]
for all $\dot w = (\dot\eta,\dot\varphi,\dot{\bar x},\dot{\bar y})^T \in \mathcal{O}$ with
\begin{align*}
	\xi := -\nabla_{\bar x}\Theta= (\Upsilon_{1_{x_1}},\Xi_{1_{x_2}})^T, \qquad
	\zeta := -\nabla_{\bar y}\Theta= (\Upsilon_{2_{x_1}},\Xi_{2_{x_2}})^T.
\end{align*}
The next lemma constructs the Poisson map $J$ in the Hamiltonian formulation and verifies that it satisfies Assumption~\ref{Assumption Poisson map}.

\begin{lemma}[Properties of $J$] \label{Lemma Properties of J}
For each $u \in \mathcal{O}$, let $J(u) : \mathcal{D}(\widehat{J}) \subset \mathbb{X}^* \to \mathbb{X}$ be defined by
\begin{align} \label{defn matrix J}
	J(u) := B(u) \widehat{J} = 
	\begin{pmatrix} 
		0  & 1      & 0      & 0      \\
		-1 & J_{22} & J_{23} 							 & J_{24} \\
		0  & J_{32} & (\epsilon\gamma_1)^{-1}\mathcal{J} & 0      \\
		0  & J_{42} & 0      							 & -(\epsilon\gamma_2)^{-1}\mathcal{J}
	\end{pmatrix},
\end{align}
where
\begin{multline*}
	J_{22} := \epsilon \Upsilon_{1_{x_1}}|_{S_t} \; \langle \cdot , -\gamma_1^{-1} \Xi_{1_{x_2}}|_{S_t} \rangle + \epsilon \Xi_{1_{x_2}}|_{S_t} \; \langle \cdot , \gamma_1^{-1} \Upsilon_{1_{x_1}}|_{S_t} \rangle \\
	+ \epsilon \Upsilon_{2_{x_1}}|_{S_t} \; \langle \cdot , \gamma_2^{-1} \Xi_{2_{x_2}}|_{S_t} \rangle 
	+ \epsilon \Xi_{2_{x_2}}|_{S_t} \; \langle \cdot , -\gamma_2^{-1} \Upsilon_{2_{x_1}}|_{S_t} \rangle,
\end{multline*}
\begin{flalign*}
	J_{23} := 
	\begin{pmatrix}
	\gamma_1^{-1}\Xi_{1_{x_2}}|_{S_t} , \quad -\gamma_1^{-1}\Upsilon_{1_{x_1}}|_{S_t}
	\end{pmatrix},
	\quad 
	J_{24} := 
	\begin{pmatrix}
	-\gamma_2^{-1}\Xi_{2_{x_2}}|_{S_t} , \quad \gamma_2^{-1}\Upsilon_{2_{x_1}}|_{S_t}
	\end{pmatrix}, &&
\end{flalign*}
\begin{flalign*}
	J_{32} := 
	\begin{pmatrix}
	\langle \cdot , -\gamma_1^{-1} \Xi_{1_{x_2}}|_{S_t} \rangle , \quad 
	\langle \cdot , \gamma_1^{-1} \Upsilon_{1_{x_1}}|_{S_t} \rangle
	\end{pmatrix}^T,
	\quad
	J_{42} :=
	\begin{pmatrix}
	\langle \cdot , \gamma_2^{-1} \Xi_{2_{x_2}}|_{S_t} \rangle , \quad
	\langle \cdot , -\gamma_2^{-1} \Upsilon_{2_{x_1}}|_{S_t} \rangle
	\end{pmatrix}^T. &&
\end{flalign*}
Then $J(u)$ satisfies Assumption~\ref{Assumption Poisson map}.
\end{lemma}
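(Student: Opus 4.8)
The plan is to verify, one by one, the list of conditions that make up Assumption~\ref{Assumption Poisson map}. Since the statement asserts that $J(u) = B(u)\widehat J$ with $B(u) = \mathrm{Id}_{\mathbb{X}} + Z(u)$, the strategy is to split the work into (i) properties of the fixed closed operator $\widehat J$, and (ii) properties of the bounded perturbation $B(u)$, then combine them. First I would check that $\widehat J : \mathcal D(\widehat J) \subset \mathbb X^* \to \mathbb X$ is a densely defined, closed, skew-adjoint operator with the stated domain: the top-left $2\times2$ block is the standard Zakharov symplectic operator $\begin{pmatrix} 0 & 1 \\ -1 & 0\end{pmatrix}$ acting between $\dot H^{1/2}$-type spaces and its dual, whose closedness and skew-adjointness are classical, and the two finite-dimensional blocks $\pm(\epsilon\gamma_i)^{-1}\mathcal J$ are invertible skew-symmetric $2\times 2$ matrices; density of $\mathcal D(\widehat J)$ in $\mathbb X^*$ is immediate since $H^{-1}\cap\dot H^{1/2}$ is dense in $\dot H^{1/2}$, etc. The range of $\widehat J$ is dense in $\mathbb X$ for the same reason, which is exactly the "dense range" relaxation that the Varholm--Wahl\'en--Walsh framework permits.

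Next I would establish the regularity of $u \mapsto B(u)$. The entries of $Z(u)$ are built from the functions $\xi = (\Upsilon_{1_{x_1}}, \Xi_{1_{x_2}})^T$ and $\zeta = (\Upsilon_{2_{x_1}}, \Xi_{2_{x_2}})^T$ restricted to $S_t$, which are smooth functions of the vortex positions $\bar x, \bar y$ and of $\eta$ (through the restriction to $S_t$), as long as $u \in \mathcal O$ keeps the vortices a positive distance from the surface. On $\mathcal O$ these restrictions are $C^1$ (indeed $C^\infty$) into $H^1(\mathbb R)$, and the pairings $\langle \xi|_{S_t}, \dot\eta\rangle$ are bounded bilinear forms on $\mathbb X$; hence $Z \in C^1(\mathcal O; \mathrm{Lin}(\mathbb X))$. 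The analogous statement into $\mathrm{Lin}(\mathbb W)$ follows because the extra derivatives land on the smooth dipole functions, not on $\dot\eta$. This gives $B \in C^1(\mathcal O; \mathrm{Lin}(\mathbb X)) \cap C^1(\mathcal O\cap\mathbb W; \mathrm{Lin}(\mathbb W))$ as asserted in \eqref{defn B(u)}. Multiplying out $B(u)\widehat J$ and collecting terms reproduces the block formula \eqref{defn matrix J} for $J(u)$, with $J_{22}, J_{23}, J_{24}, J_{32}, J_{42}$ as displayed; this is a routine but bookkeeping-heavy computation that I would carry out once carefully.

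The substantive points are skew-symmetry of $J(u)$ and the invertibility/structure condition required by the assumption. For skew-symmetry I would show $\langle J(u) f, g\rangle = -\langle f, J(u) g\rangle$ for $f, g$ in the domain: the $\widehat J$ part contributes a skew form, and one checks that the rank-one corrections in $J_{22}$ pair up with the off-diagonal blocks $J_{23}, J_{24}$ versus $J_{32}, J_{42}$ precisely so that the cross terms cancel — this is the reason the signs in $Z(u)$ are arranged as they are, and it is essentially the statement that $Z(u)\widehat J$ plus its formal adjoint vanishes. Finally, because $B(u) = \mathrm{Id}_{\mathbb X} + Z(u)$ and $Z(u)$ is a finite-rank (hence compact) operator depending continuously on $u$, for $u$ in a neighborhood of the traveling wave $U_c$ (where $\epsilon$ is small, so $\|Z(u)\|$ is small) $B(u)$ is invertible with $B(u)^{-1} = \mathrm{Id}_{\mathbb X} + O(\epsilon)$, giving $J(u) = B(u)\widehat J$ the required factored form with dense range equal to that of $\widehat J$. \textbf{The main obstacle} I anticipate is not any single deep estimate but rather correctly matching the abstract hypotheses of Assumption~\ref{Assumption Poisson map} (the precise domain conditions, the $C^1$-dependence in the right operator spaces, and the dense-range-with-bounded-algebraic-inverse structure) to the concrete operator above — in particular verifying skew-symmetry through the non-obvious cancellation among the rank-one terms in $J_{22}$ and the blocks $J_{2j}, J_{j2}$, and checking that the mismatch between $\mathbb X$ and $\mathbb W$ does not spoil the $\mathrm{Lin}(\mathbb W)$ regularity of $B$.
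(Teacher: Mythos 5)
Your overall strategy matches the paper's: verify density of $\mathcal{D}(\widehat J)$ and injectivity of $\widehat J$ directly from \eqref{defn J hat}, get the $C^1$ regularity of $B$ from the smooth dependence of $\xi|_{S_t}$, $\zeta|_{S_t}$ on $u\in\mathcal O$, and read off skew-adjointness from the sign structure of \eqref{defn B(u)} and \eqref{defn matrix J}. However, there is a genuine gap in your argument for part (iii), the bijectivity of $B(u)$. You invoke a Neumann-series/smallness argument: ``$\epsilon$ is small, so $\|Z(u)\|$ is small.'' This is false: the third and fourth rows of the matrix defining $Z(u)$ contain the blocks $\gamma_1^{-1}\mathcal J$ and $-\gamma_2^{-1}\mathcal J$ with \emph{no} factor of $\epsilon$, so $\|Z(u)\|_{\mathrm{Lin}(\mathbb X)}$ does not tend to $0$ as $\epsilon\to 0$; it is controlled only by $\gamma_i^{-1}$ and by $\|\xi|_{S_t}\|$, $\|\zeta|_{S_t}\|$, which are $O(1)$. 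Moreover, even if the norm were small, your argument would only yield invertibility for $u$ in a neighborhood of $U_c$, whereas Assumption~\ref{Assumption Poisson map}(iii) — and the lemma as stated — requires bijectivity of $B(u)$ for \emph{every} $u\in\mathcal O\cap\mathbb V$.

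The fix is the route the paper takes: since $Z(u)$ is finite rank, $B(u)=\mathrm{Id}_{\mathbb X}+Z(u)$ is a compact perturbation of the identity and hence Fredholm of index $0$; it then suffices to check injectivity, which follows from the triangular structure of $Z(u)$. Indeed, if $B(u)\dot w=0$, the first component gives $\dot\eta=0$ (the first row of $Z$ vanishes), whence $\langle\xi|_{S_t},\dot\eta\rangle=\langle\zeta|_{S_t},\dot\eta\rangle=0$; the third and fourth components then force $\dot{\bar x}=\dot{\bar y}=0$, and finally the second component gives $\dot\varphi=0$. Injectivity plus index $0$ yields bijectivity on $\mathbb X$ (and the same argument runs on $\mathbb W$), uniformly over all of $\mathcal O$ and all $\epsilon\neq 0$. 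The rest of your proposal — the verification of (i), (ii), (iv), (v) and the bookkeeping identifying $B(u)\widehat J$ with the displayed block form — is consistent with the paper's (much terser) proof.
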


\begin{proof}
We see from its definition in \eqref{defn J hat} that $\widehat{J}$ is injective and its domain $\mathcal{D}(\widehat{J})$ is dense in $\mathbb{X}^* = I\mathbb{X}$, which proves that part (i) and (ii) of Assumption~\ref{Assumption Poisson map} are satisfied.  Moreover, since $B(u)$ defined by \eqref{defn B(u)} is both Fredholm index 0 and injective, it is an isomorphism on $\mathbb{X}$ and $\mathbb{W}$ giving part (iii).  Parts (iv) and (v) of Assumption~\ref{Assumption Poisson map} follow directly from the definitions \eqref{defn B(u)} and \eqref{defn matrix J}.
\end{proof}

In order to apply the instability theory in Appendix~\ref{Appendix Varholm--Wahlen--Walsh instability theory}, we must show that $DE$ can be realized as an element of $\mathbb{X}^*$.  With that in mind, we define the extension $\nabla E \in C^0(\mathcal{O} \cap \mathbb{V}; \mathbb{X}^*)$ by
\begin{multline} \label{defn nabla E}
	\langle \nabla E(u), v \rangle_{\mathbb{X}^* \times \mathbb{X}} 
	:= \langle E'_{\eta}(u), v_1 \rangle_{H^{-1} \times H^1} 
	+ \langle E'_{\varphi}(u), v_2 \rangle_{\dot{H}^{-1/2} \times \dot{H}^{1/2}} \\
	+ (E'_{\bar x}(u), v_3)_{\mathbb{R}^2}
	+ (E'_{\bar y}(u), v_4)_{\mathbb{R}^2},
\end{multline}
where
\[
	E'_{\varphi}(u) := \mathcal{G}(\eta)\varphi +\epsilon \nabla_\perp\Theta,
\]
\[
	 E'_{\eta}(u) := \frac{1}{2} \int_\mathbb{R} \varphi \langle D_\eta \mathcal{G}(\eta) \cdot, \varphi \rangle \,\mathrm{d}x_1 + \epsilon  \varphi'  \Theta_{x_1}|_{S_t}
	+ \frac{\epsilon^2}{2}  |(\nabla\Theta)|_{S_t}|^2  +  g\eta - b \left( \frac{\eta'}{\langle\eta'\rangle} \right)',
\]
\[
	E'_{\bar x}(u) := -\epsilon\int_{\mathbb{R}} \varphi\nabla_\perp\xi \,\mathrm{d}x_1 - \frac{\epsilon^2}{2} \int_{\mathbb{R}} \Big( \xi \nabla_\perp\Theta + \Theta|_{S_t} \nabla_\perp\xi \Big) \,\mathrm{d}x_1
	+ \nabla_{\bar x} \Gamma^*,
\]
\[
	E'_{\bar y}(u) := -\epsilon\int_{\mathbb{R}} \varphi\nabla_\perp\zeta \,\mathrm{d}x_1 - \frac{\epsilon^2}{2} \int_{\mathbb{R}} \Big( \zeta \nabla_\perp\Theta + \Theta|_{S_t} \nabla_\perp\zeta \Big) \,\mathrm{d}x_1 + \nabla_{\bar y} \Gamma^*.
\]
Here we use subscripts $x_1$ and $x_2$ to denote partial derivatives.  See Appendix~\ref{Appendix variations of E and P} for details.

\begin{thm}[Hamiltonian formulation] \label{Theorem Hamiltonian formulation}
The capillary-gravity water wave with a finite dipole problem \eqref{equation eta phi x_bar y_bar} has a solution $u := (\eta, \varphi, \bar{x}, \bar{y})^T \in C^1\left([0,t_0); \mathcal{O} \cap \mathbb{W}\right)$ if and only if it is a solution to the abstract Hamiltonian system
\begin{equation} \label{Hamiltonian abstract eqn}
	\frac{\mathrm{d}u}{\mathrm{d}t} = J(u) DE(u),
\end{equation}
where $E$ is the energy functional defined in \eqref{energy functional} and $J$ is the skew-symmetric operator defined by \eqref{defn matrix J}.
\end{thm}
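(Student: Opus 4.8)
The plan is to verify the equivalence directly, by computing the four components of $J(u)DE(u)$ from the explicit formulas for $\widehat J$ in \eqref{defn J hat}, for $B(u)=\mathrm{Id}_{\mathbb X}+Z(u)$ in \eqref{defn B(u)}, and for the variations of $E$ in \eqref{defn nabla E}, and matching them one by one against the right-hand sides of \eqref{equation eta phi x_bar y_bar}. Since each match will be an identity between well-defined objects, both implications of the theorem follow at once. The preliminary observation is that for $u\in C^1([0,t_0);\mathcal O\cap\mathbb W)$ one has $u\in\mathcal O\cap\mathbb V$ (so the variations of \eqref{defn nabla E}, derived in Appendix~\ref{Appendix variations of E and P}, apply) and, by the $\mathbb W$-regularity of $\eta,\varphi$, also $\nabla E(u)\in\mathcal D(\widehat J)$, so that $J(u)DE(u)=B(u)\widehat J\,\nabla E(u)$ is a genuine element of $\mathbb X$ and everything below is classically meaningful. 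For bookkeeping I set $(w_1,w_2,w_3,w_4):=\widehat J\,\nabla E(u)=\big(E'_\varphi,\ -E'_\eta,\ (\epsilon\gamma_1)^{-1}\mathcal JE'_{\bar x},\ -(\epsilon\gamma_2)^{-1}\mathcal JE'_{\bar y}\big)$, and recall $\xi=-\nabla_{\bar x}\Theta$, $\zeta=-\nabla_{\bar y}\Theta$, so that $\partial_t\Theta=-\xi^T\partial_t\bar x-\zeta^T\partial_t\bar y$ because $\Theta$ depends on time only through the vortex centers.

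The easy components come first. The $\eta$-equation is immediate: the first row of $Z(u)$ vanishes, so $[J(u)DE(u)]_1=w_1=E'_\varphi=\mathcal G(\eta)\varphi+\epsilon\nabla_\perp\Theta$, which is the first line of \eqref{equation eta phi x_bar y_bar}. For the vortex equations, in $[J(u)DE(u)]_3=(\epsilon\gamma_1)^{-1}\mathcal JE'_{\bar x}+\gamma_1^{-1}\mathcal J\langle\xi|_{S_t},w_1\rangle$ the terms linear in $\varphi$ reduce to $\gamma_1^{-1}\mathcal J\big(\int_{\mathbb R}\xi|_{S_t}\,\mathcal G(\eta)\varphi\,\mathrm{d}x_1-\int_{\mathbb R}\varphi\,\nabla_\perp\xi\,\mathrm{d}x_1\big)$; since $\mathcal G(\eta)\varphi=\nabla_\perp\Phi|_{S_t}$ with $\Phi=\varphi_{\mathcal H}$ harmonic in $\Omega$, a reciprocity (Green's second identity) argument with the dipole kernel $\gamma_1^{-1}\xi$ collapses this to the interior point value $\nabla\Phi(\bar x)$. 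The remaining terms, quadratic in the dipole strengths, combine by the same identity together with the explicit form of the regularizing self-energy $\Gamma^*$ in \eqref{energy functional} into $\epsilon\nabla(\Theta_1^*+\Theta_2+\Theta_2^*)(\bar x)$, i.e. the regular part of $\epsilon\nabla\Theta$ at $\bar x$ with the divergent self-term $\epsilon\nabla\Theta_1(\bar x)$ omitted, exactly as prescribed by the Kirchhoff--Helmholtz law \eqref{eqn vortices motion}. This is the third line of \eqref{equation eta phi x_bar y_bar}; the fourth is the same computation with $(\bar x,\gamma_1,\xi,\Theta_1)$ replaced by $(\bar y,\gamma_2,\zeta,\Theta_2)$ and an overall sign from the $-(\epsilon\gamma_2)^{-1}\mathcal J$ block.

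The core of the argument is the $\varphi$-equation. The second component of $J(u)DE(u)$ is $w_2$ plus the second row of $Z(u)$ (into whose output only $w_1,w_3,w_4$ enter), namely $-E'_\eta-\epsilon\gamma_1^{-1}(\mathcal J\xi|_{S_t})^T\langle\xi|_{S_t},w_1\rangle+\epsilon\gamma_2^{-1}(\mathcal J\zeta|_{S_t})^T\langle\zeta|_{S_t},w_1\rangle+\epsilon\,\xi^T|_{S_t}w_3+\epsilon\,\zeta^T|_{S_t}w_4$. Writing $w_3=[J(u)DE(u)]_3-\gamma_1^{-1}\mathcal J\langle\xi|_{S_t},w_1\rangle$ and $w_4=[J(u)DE(u)]_4+\gamma_2^{-1}\mathcal J\langle\zeta|_{S_t},w_1\rangle$ and using $\mathcal J^T=-\mathcal J$, all the terms containing $\langle\xi|_{S_t},w_1\rangle$ or $\langle\zeta|_{S_t},w_1\rangle$ cancel pairwise, leaving the identity $[J(u)DE(u)]_2=-E'_\eta+\epsilon\,\xi^T|_{S_t}[J(u)DE(u)]_3+\epsilon\,\zeta^T|_{S_t}[J(u)DE(u)]_4$, valid for every $u$. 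On a solution of \eqref{Hamiltonian abstract eqn} the last two factors are $\partial_t\bar x,\partial_t\bar y$, so by the formula for $\partial_t\Theta$ this equals $-E'_\eta-\epsilon\,\partial_t\Theta|_{S_t}$. It then remains to identify $-E'_\eta$ with the Bernoulli expression on the second line of \eqref{equation eta phi x_bar y_bar}: the gravity and capillarity terms match using $(\eta'/\langle\eta'\rangle)'=\eta''\langle\eta'\rangle^{-3}$, the $\epsilon\varphi'\Theta_{x_1}|_{S_t}$ and $\tfrac{\epsilon^2}{2}|(\nabla\Theta)|_{S_t}|^2$ terms of $E'_\eta$ match directly, and the purely irrotational term of $E'_\eta$ --- the $\eta$-derivative of $\tfrac12\int_{\mathbb R}\varphi\,\mathcal G(\eta)\varphi\,\mathrm{d}x_1$ --- is identified with $\tfrac{1}{2\langle\eta'\rangle^2}\big((\varphi')^2-2\eta'\varphi'\mathcal G(\eta)\varphi-(\mathcal G(\eta)\varphi)^2\big)$ via the classical shape-derivative formula $D_\eta[\mathcal G(\eta)\varphi]\,h=-\mathcal G(\eta)(Bh)-\partial_{x_1}(Vh)$, with $B=(\mathcal G(\eta)\varphi+\eta'\varphi')/\langle\eta'\rangle^2$ and $V=\varphi'-\eta'B$ (see e.g. \cite{Craig_Sulem1993,Lannes2013}), after an integration by parts. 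Collecting the four component identities proves the theorem in both directions.

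The step I expect to be hardest is the vortex-equation computation: the reciprocity/Green's-identity manipulation converting the surface integrals of $\varphi$ against the dipole kernel into the interior value $\nabla\Phi(\bar x)$, and the verification that the dipole-quadratic remainder equals $\nabla_{\bar x}\Gamma^*$, both require careful attention to the branch cuts in \eqref{Theta functions defn}, to exactly which singular self-interaction contributions are legitimately discarded, and to the fact that $\xi,\zeta$ are evaluated on the moving surface $S_t$ rather than on a fixed horizontal line. The variational computations collected in Appendix~\ref{Appendix variations of E and P}, together with the steady and unsteady potential equations of Appendix~\ref{Appendix steady and unsteady equations}, are where these identities are carried out in full.
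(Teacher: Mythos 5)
Your proposal is correct and follows essentially the same route as the paper: a direct componentwise identification of $J(u)DE(u)$ with the right-hand sides of \eqref{equation eta phi x_bar y_bar}, using Green's identity near the vortex centers (the paper shrinks the contour to $\partial B_r(\bar x)$ and expands the harmonic conjugate $\Psi$) to recover the Kirchhoff--Helmholtz law, and the chain rule $\partial_t\Theta|_{S_t}=-\xi^T|_{S_t}\partial_t\bar x-\zeta^T|_{S_t}\partial_t\bar y$ together with the shape-derivative formula for $\mathcal{G}(\eta)$ to recover the Bernoulli equation. Your explicit cancellation showing $[J(u)DE(u)]_2=-E'_\eta+\epsilon\,\xi^T|_{S_t}[J(u)DE(u)]_3+\epsilon\,\zeta^T|_{S_t}[J(u)DE(u)]_4$ is a slightly cleaner packaging of what the paper writes out in \eqref{partial varphi Hamitonian explicit form}, but the substance is identical.
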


\begin{proof}
Throughout the proof, we make repeated use of the identities
\begin{align} \label{identities grad perp and grad T}
	\nabla_\perp\psi_{x_1} = \nabla_\top\psi_{x_2} = \Big( \psi_{x_2}|_{S_t} \Big)',
	\quad
	\nabla_\perp\psi_{x_2} = -\nabla_\top\psi_{x_1} = -\Big( \psi_{x_1}|_{S_t} \Big)',
\end{align}
where $\psi$ is any function harmonic in a neighborhood of $S_t$, and recall $\nabla_\perp$ and $\nabla_\top$ are defined in \eqref{defn nabla_perp nabla_top}.

Suppose we have a solution $u$ of \eqref{Hamiltonian abstract eqn}.  From the expressions for $J$ in \eqref{defn matrix J} and the differential equation \eqref{Hamiltonian abstract eqn}, we see that
\[
	\partial_t \eta = D_\varphi E(u) = \mathcal{G}(\eta) \varphi + \epsilon \nabla_\perp \Theta,
\]
which is the kinematic condition \eqref{kinematic eqn}.  

Next, we verify that 
\[
	\partial_t \bar{x} = J_{32}\varphi + (\epsilon \gamma_1)^{-1} \mathcal{J} \nabla_{\bar x}E(u)
\]
is equivalent to the ODE for $\bar x$ in \eqref{eqn vortices motion}.  Explicitly, the first component of the equation is
\begin{equation} \label{Claim partial_t xbar1}
	\partial_t \bar{x}_1 = -(\epsilon \gamma_1)^{-1} \partial_{\bar{x}_2} E(u) + \left\langle D_\varphi E(u), -\gamma_1^{-1} \Xi_{1_{x_2}}|_{S_t} \right\rangle.
\end{equation}
Using the fact that
\[
	\nabla_\top f = \partial_{x_1} \Big( f|_S \Big), \quad \nabla_\perp \Phi = \nabla_\top \Psi, \quad \nabla_\top \Phi = -\nabla_\perp \Psi, \quad \nabla_\perp \Theta = \nabla_\top \Gamma, \quad \nabla_\top \Theta = -\nabla_\perp \Gamma,
\]
and the identities \eqref{identities grad perp and grad T},  \eqref{Claim partial_t xbar1} becomes
\begin{align*}
	\partial_t\bar x &= \frac{1}{\gamma_1} \int_{\mathbb{R}} \left( -\Xi_{1_{x_1}}|_{S_t} \nabla_\perp\Psi + \Psi|_{S_t} \nabla_\perp\Xi_{1_{x_1}} \right) \,\mathrm{d}x_1 \\ 
	& \qquad  + \frac{\epsilon}{2\gamma_1} \int_{\mathbb{R}} \left( -\Xi_{1_{x_1}}|_{S_t} \nabla_\perp\Gamma - \Xi_{1_{x_2}}|_{S_t} \nabla_\perp\Theta \right) \,\mathrm{d}x_1  - \epsilon \Gamma_{1_{x_2}}^*(\bar{x}) - \epsilon \Gamma_{2_{x_2}}(\bar{x}) - \epsilon \Gamma_{2_{x_2}}^*(\bar{x}) \\
	& =: \frac{1}{\gamma_1} \mathscr{A} + \frac{\epsilon}{2\gamma_1} \mathscr{B} - \epsilon \Gamma_{1_{x_2}}^*(\bar{x}) - \epsilon \Gamma_{2_{x_2}}(\bar{x}) - \epsilon \Gamma_{2_{x_2}}^*(\bar{x}).
\end{align*}
Since $\Psi$ and $\Theta_1^*$ are harmonic in $\Omega_t$, for any $0 < r \ll 1$ we have
\begin{align*}
	\mathscr{A} &= -\int_{\partial B_r(\bar{x})} \Big( -\Theta_{1_{x_1}} N\cdot\nabla\Psi + \Psi N\cdot\nabla\Theta_{1_{x_1}} \Big) \,\mathrm{d}S_t \\
	&= -\int_{\partial B_r(\bar{x})} \Big( \frac{\gamma_1}{2\pi} \frac{x_2-\bar{x}_2}{|x-\bar{x}|^2} N\cdot\nabla\Psi + \Psi \frac{\gamma_1}{2\pi} \frac{x_2-\bar{x}_2}{|x-\bar{x}|^3} \Big) \,\mathrm{d}S_t \\
	&= -\frac{\gamma_1}{2\pi} \int_0^{2\pi} \Big( \frac{r\sin\theta}{r^2} \partial_r (\Psi) + \Psi \frac{r\sin\theta}{r^3} \Big) r \,\mathrm{d}\theta.
\end{align*}
Expanding $\Psi$ around $r=0$ gives
\begin{align*}
	\mathscr{A} &= -\frac{\gamma_1}{2\pi} \int_0^{2\pi} \Big[ \sin\theta(\Psi_{x_1} \cos\theta + \Psi_{x_2} \sin\theta) + \frac{\sin\theta}{r}(\Psi(\bar{x}) + \Psi_{x_1}(\bar{x}) r\cos\theta \\
	& \qquad + \Psi_{x_2}(\bar{x}) r\sin\theta) \Big] \,\mathrm{d}\theta + o(r)
	= -\gamma_1 \Psi_{x_2}(\bar{x}) + o(r)
\end{align*}
as $r \to 0$.  A direct computation along the same lines shows $\mathscr{B} = 0$.  Thus, \eqref{Claim partial_t xbar1} is equivalent to 
\[
	\partial_t \bar x_1 = -\Psi_{x_2}(\bar x) - \epsilon \Gamma_{1_{x_2}}^*(\bar{x}) - \epsilon \Gamma_{2_{x_2}}(\bar{x}) - \epsilon \Gamma_{2_{x_2}}^*(\bar{x}),
\]
which agrees the Kirchhoff--Helmholtz model \eqref{eqn vortices motion}.  By nearly identical arguments, we likewise confirm that the same holds for $\partial_t \bar x_2$ and then $\partial_t \bar y$.

Finally, we claim that
\begin{align} \label{partial varphi Hamitonian explicit form}
\begin{split}
	\partial_t \varphi = -D_\eta E(u) 
	&+ \xi|_{S_t} \cdot (\gamma_1^{-1}\mathcal{J}) \nabla_{\bar x} E(u)
	+ \zeta|_{S_t} \cdot (\gamma_2^{-1}\mathcal{J}) \nabla_{\bar y} E(u)
	\\
	&+ \epsilon \xi|_{S_t} \left\langle D_{\varphi}E(u), (-\gamma_1^{-1}\mathcal{J}) \xi \right\rangle
	+ \epsilon \zeta|_{S_t} \left\langle D_{\varphi}E(u), (-\gamma_2^{-1}\mathcal{J}) \zeta \right\rangle
\end{split}
\end{align}
is equivalent to the unsteady Bernoulli condition in \eqref{equation eta phi x_bar y_bar}.  By a well-known formula for the derivative $\mathcal{G}(\eta)$ (see, for example, \cite[Proposition 2.1]{Mielke2001}), we know that
\[
	\int_\mathbb{R} \varphi \langle D_\eta \mathcal{G}(\eta) \dot{\eta}, \varphi \rangle \,\mathrm{d}x_1 = \int_\mathbb{R} \frac{1}{\langle\eta'\rangle^2} \Big( (\varphi')^2 - (\mathcal{G}(\eta)\varphi)^2 - 2 \eta' \varphi' \mathcal{G}(\eta)\varphi \Big) \dot{\eta} \,\mathrm{d}x_1.
\]
Then
\begin{align*}
	&\partial_t \varphi = -\frac{1}{\langle\eta'\rangle^2} \Big( (\varphi')^2 - (\mathcal{G}(\eta)\varphi)^2 - 2 \eta' \varphi' \mathcal{G}(\eta)\varphi \Big) + \epsilon \varphi' \Gamma_{x_2}|_{S_t} - \frac{\epsilon^2}{2} |(\nabla \Theta)|_{S_t}|^2 - V'_\eta(u) \\
	&\qquad + \epsilon \Theta_{1_{x_1}}|_{S_t} \; \partial_t \bar{x}_1 + \epsilon \Theta_{1_{x_2}}|_{S_t} \; \partial_t \bar{x}_2 + \epsilon \Theta_{2_{x_1}}|_{S_t} \; \partial_t \bar{y}_1 + \epsilon \Theta_{2_{x_2}}|_{S_t} \; \partial_t \bar{y}_2.  
\end{align*}
Here we have used the fact that for $\Theta = (\Theta_1 + \Theta_1^* + \Theta_2 + \Theta_2^*)(x_1, x_2, \bar{x}, \bar{y})$,
\begin{align*}
	( \partial_t \Theta )|_S &= (-\Theta_{1_{x_1}}-\Theta_{1_{x_1}}^*)|_{S_t} \, \partial_t \bar{x}_1 + (-\Theta_{1_{x_2}}+\Theta_{1_{x_2}}^*)|_{S_t} \, \partial_t \bar{x}_2 + (-\Theta_{2_{x_1}}-\Theta_{2_{x_1}}^*)|_{S_t} \, \partial_t \bar{y}_1 \\
	&\qquad + (-\Theta_{2_{x_2}}+\Theta_{2_{x_2}}^*)|_{S_t} \, \partial_t \bar{y}_2 \\
	&= -\Upsilon_{1_{x_1}}|_{S_t} \; \partial_t \bar{x}_1 - \Xi_{1_{x_2}}|_{S_t} \; \partial_t \bar{x}_2 - \Upsilon_{2_{x_1}}|_{S_t} \; \partial_t \bar{y}_1 - \Xi_{2_{x_2}}|_{S_t} \; \partial_t \bar{y}_2.
\end{align*}
Thus, comparing this to the equations for $\varphi$ in \eqref{equation eta phi x_bar y_bar}, the claim has been proved.  
\end{proof}

The momentum associated to a solution of the system \eqref{Hamiltonian abstract eqn} is given by
\begin{align} \label{momentum P}
	P = P(u) = -\epsilon \gamma_1 \bar{x}_2 + \epsilon \gamma_2 \bar{y}_2 - \int_\mathbb{R} \eta' (\varphi + \epsilon \Theta|_{S_t}) \,\mathrm{d}x_1.
\end{align}
It is clear that $P \in C^\infty(\mathcal{O} \cap \mathbb{V}; \mathbb{R})$.  Similarly to the Fr\'echet derivatives of the energy, $DP$ can be extended to $\nabla P \in C^0(\mathcal{O} \cap \mathbb{V}; \mathbb{X}^*)$:
\begin{multline} \label{defn nabla P}
	\langle \nabla P(u),v \rangle_{\mathbb{X}^* \times \mathbb{X}} 
	:= \langle P'_{\eta}(u),v_1 \rangle_{H^{-1} \times H^1}
	+ \langle P'_{\varphi}(u),v_2 \rangle_{\dot{H}^{-1/2} \times \dot{H}^{1/2}} \\
	+ (P'_{\bar x}(u),v_3)_{\mathbb{R}^2}
	+ (P'_{\bar y}(u),v_4)_{\mathbb{R}^2}
\end{multline}
with
\begin{align*}
	P'_{\eta}(u) &= \varphi' + \epsilon\Theta_{x_1}|_{S_t},
	& P'_{\varphi}(u) &= -\eta',
	\\
	P'_{\bar x}(u) &:= -\epsilon\gamma_1 e_2 + \epsilon\int_{\mathbb{R}} \eta' \xi|_{S_t} \,\mathrm{d}x_1,
	& P'_{\bar y}(u) &:= \epsilon\gamma_2 e_2 + \epsilon\int_{\mathbb{R}} \eta' \zeta|_{S_t} \,\mathrm{d}x_1.
\end{align*}
It is immediate from their definitions in \eqref{defn nabla E} and \eqref{defn nabla P} that $\nabla E$ and $\nabla P$ satisfy Assumption \ref{Assumption derivative extension}.  Observe also that $\nabla P$ is in $\mathcal{D}(\widehat{J})$ and 
\begin{align} \label{expression JDP}
	J(u) \nabla P(u) = \left( -\eta', -\varphi', 1, 0, 1, 0 \right)^T.
\end{align}
The next lemma records the fact that the momentum and the energy are conserved.  

\begin{lemma}[Conservation]
Suppose that $u \in C^0 \left( [0,t_0); \mathcal{O} \cap \mathbb{W} \right)$ is a distributional solution to the Cauchy problem \eqref{Hamiltonian abstract eqn} with initial data $u_0 \in \mathcal{O} \cap \mathbb{W}$.  Then
\[
	E(u(t)) = E(u_0)
	\quad \text{and} \quad
	P(u(t)) = P(u_0)
	\quad \text{for all } t \in [0,t_0).
\]
\end{lemma}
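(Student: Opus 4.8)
The plan is to obtain both conservation laws from two structural ingredients already established: the skew-symmetry of the Poisson map $J(u)$ (Lemma~\ref{Lemma Properties of J}, which verifies Assumption~\ref{Assumption Poisson map}) and the invariance of the energy $E$ under the translation group $T(s)$ of \eqref{defn_symmetry_group_T}. First I would upgrade the regularity of the solution. Since $u\in C^0([0,t_0);\mathcal{O}\cap\mathbb{W})$ and $\mathbb{W}$ embeds continuously into the intermediate space $\mathbb{V}$ from \eqref{V space}, the curve $t\mapsto\nabla E(u(t))$ is continuous into $\mathbb{X}^*$, takes values in $\mathcal{D}(\widehat J)$, and is continuous there in the graph norm; together with the boundedness of $B(u)$ on $\mathbb{X}$ from Lemma~\ref{Lemma Properties of J} this makes $t\mapsto J(u(t))\nabla E(u(t))$ continuous into $\mathbb{X}$, so the distributional identity $\dot u=J(u)\nabla E(u)$ is in fact an identity in $C^1([0,t_0);\mathbb{X})$.

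Next I would apply the fundamental-theorem-of-calculus property encoded in Assumption~\ref{Assumption derivative extension}: along a $C^1$ curve in $\mathbb{X}$ that remains in $\mathcal{O}\cap\mathbb{V}$, the functionals $E$ and $P$ are differentiated by their extended gradients, so $E(u(t))-E(u_0)=\int_0^t\langle\nabla E(u(\tau)),\dot u(\tau)\rangle_{\mathbb{X}^*\times\mathbb{X}}\,\mathrm{d}\tau$, and likewise for $P$. Substituting $\dot u=J(u)\nabla E(u)$ and using skew-symmetry of $J(u)$ with both slots equal to $\nabla E(u)$ gives $\langle\nabla E(u),J(u)\nabla E(u)\rangle=0$, which proves $E(u(t))=E(u_0)$. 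For the momentum, skew-symmetry yields $\langle\nabla P(u),J(u)\nabla E(u)\rangle=-\langle\nabla E(u),J(u)\nabla P(u)\rangle$, and since $\nabla P(u)\in\mathcal{D}(\widehat J)$ the explicit evaluation \eqref{expression JDP} identifies this with $-\langle\nabla E(u),(-\eta',-\varphi',1,0,1,0)^T\rangle$. The point is that $(-\eta',-\varphi',1,0,1,0)^T=\frac{\mathrm{d}}{\mathrm{d}s}\big|_{s=0}T(s)u$ is exactly the infinitesimal generator of \eqref{defn_symmetry_group_T}, so differentiating the invariance identity $E(T(s)u)\equiv E(u)$ at $s=0$ shows this pairing vanishes, and hence $P(u(t))=P(u_0)$.

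The hard part will be the first step: justifying the chain rule for a solution that is only continuous in time with values in the well-posedness space, in the presence of the mismatch $\mathbb{W}\nsubseteqq\mathbb{X}$ and the fact that $E$ and $P$ are only smooth on $\mathcal{O}\cap\mathbb{V}$. This is precisely what the Varholm--Wahl\'en--Walsh framework summarized in Appendix~\ref{Appendix Varholm--Wahlen--Walsh instability theory} is designed to handle: the interpolation bound $\|u\|_{\mathbb{V}}^3\le C\|u\|_{\mathbb{X}}^{2+\theta}\|u\|_{\mathbb{W}}^{1-\theta}$ from Assumption~\ref{Assumption space}, together with the compatibility requirements of Assumption~\ref{Assumption derivative extension}, guarantee that $\nabla E$ and $\nabla P$ behave as genuine derivatives along such curves. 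If one prefers to sidestep this machinery, an alternative is to approximate $u_0$ by smoother data, invoke the local well-posedness hypothesis to produce classical solutions for which the computation above is immediate, and then pass to the limit using the continuity of $E$ and $P$ on $\mathcal{O}\cap\mathbb{V}$ and continuous dependence on the initial data.
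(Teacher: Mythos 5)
Your argument is correct and is essentially the one the paper intends: the paper omits the proof entirely, deferring to \cite[Theorem 5.3]{Varholm_Wahlen_Walsh2018}, where conservation is obtained exactly as you do --- energy from the skew-symmetry of $J(u)$ applied with both slots equal to $\nabla E(u)$, and momentum from skew-symmetry together with the identity $J(u)\nabla P(u)=T'(0)u$ in \eqref{expression JDP} and the translation invariance $E(T(s)u)=E(u)$. Your identification of the chain-rule/regularity upgrade as the only technical point, handled by the regularity of $\mathbb{W}$, matches the paper's remark that the result ``follows directly from computation and the regularity of the well-posedness space $\mathbb{W}$.''
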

The proof follows directly from computation and the regularity of the well-posedness space $\mathbb{W}$; see, \cite[Theorem 5.3]{Varholm_Wahlen_Walsh2018}.  

Next, we verify that the symmetry group $T$ defined by \eqref{defn_symmetry_group_T} indeed satisfies Assumption~\ref{Assumption symmetry group}.  The linear part of $T$ is 
\begin{equation} \label{defn:dT(s)}
	dT(s)u = (\eta(\cdot - s), \varphi(\cdot - s), \bar x, \bar y)^T
	\quad \textrm{for all } s \in \mathbb{R}, u \in \mathbb{X},
\end{equation}
and the infinitesimal generator of $T$ is the unbounded affine operator
\begin{equation} \label{defn_infinitesimal generator T}
	T'(0)u := (-\eta',-\varphi',e_1,e_1)^T
	\quad \textrm{for all } u \in \mathcal{D}(T'(0))
\end{equation}
with $\mathcal{D}(T'(0)) = H^2(\mathbb{R}) \times \left( \dot{H}^{3/2}(\mathbb{R}) \cap \dot{H}^{1/2}(\mathbb{R}) \right) \times \mathbb{R}^2 \times \mathbb{R}^2$.

\begin{lemma} \label{Lemma properties of T}
The group $T(s)$ satisfies Assumption~\ref{Assumption symmetry group}.
\end{lemma}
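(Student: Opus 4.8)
The plan is to check the clauses of Assumption~\ref{Assumption symmetry group} one at a time, exploiting that $T(s)$ acts by a horizontal translation on the surface unknowns together with a rigid shift $\bar x \mapsto \bar x + se_1$, $\bar y \mapsto \bar y + se_1$ of the vortex centers. First, the group law $T(s_1)T(s_2) = T(s_1+s_2)$ and $T(0) = \mathrm{Id}_{\mathbb{X}}$ are immediate from \eqref{defn_symmetry_group_T}. On each Sobolev factor the linear part \eqref{defn:dT(s)} is the translation $f \mapsto f(\cdot - s)$, a Fourier multiplier of unit modulus, hence an isometry of $\mathbb{X}_1$ and $\mathbb{X}_2$ (and likewise of the corresponding $\mathbb{W}$ and $\mathbb{V}$ factors), while it is the identity on $\mathbb{X}_3 \times \mathbb{X}_4 = \mathbb{R}^2 \times \mathbb{R}^2$; thus $dT(s)$ is a strongly continuous unitary group on $\mathbb{X}$, and since the affine part $s \mapsto (0,0,se_1,se_1)^T$ is clearly continuous, so is $s \mapsto T(s)u$ in $\mathbb{X}$ and in $\mathbb{W}$. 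Its generator is the unbounded affine operator $T'(0)u = (-\eta',-\varphi',e_1,e_1)^T$ of \eqref{defn_infinitesimal generator T}, densely defined on the stated domain, and closed because $\partial_{x_1}$ is closed on those spaces.

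Second, $T(s)$ preserves $\mathcal{O}$: if $u \in \mathcal{O}$ then $T(s)u$ has surface profile $\eta(\cdot - s)$ and vortex centers $\bar x + se_1$, $\bar y + se_1$, so that $\eta(\cdot - s)$ evaluated at the new first coordinate $\bar x_1 + s$ equals $\eta(\bar x_1)$ while the vertical coordinate $\bar x_2$ is unchanged; hence the inequalities in \eqref{O space} at $\bar x$ (and likewise at $\bar y$) are equivalent to those for $u$, and $\bar x + se_1 \ne \bar y + se_1$ iff $\bar x \ne \bar y$. Third, $E$ and $P$ are $T(s)$-invariant on $\mathcal{O}\cap\mathbb{V}$: every term in \eqref{potential energy V}, in $K_0 + \epsilon K_1 + \epsilon^2 K_2$, and in \eqref{momentum P} is either an integral over $\mathbb{R}$ — which is translation invariant once one records the equivariances $\mathcal{G}(\eta(\cdot-s))(\varphi(\cdot-s)) = (\mathcal{G}(\eta)\varphi)(\cdot-s)$ and $\Theta(x+se_1;\bar x+se_1,\bar y+se_1) = \Theta(x;\bar x,\bar y)$, so that $\Theta|_{S_t}$, $\xi|_{S_t}$, $\zeta|_{S_t}$ for $T(s)u$ are the translates of those for $u$ — or a function of the vertical coordinates of the vortices and their mutual distances (the term $\Gamma^*$ and the boundary contributions $-\epsilon\gamma_1\bar x_2 + \epsilon\gamma_2\bar y_2$), which are unchanged. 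Differentiating these identities yields the required $dT(s)$-equivariance of $\nabla E$ and $\nabla P$ as maps into $\mathbb{X}^*$.

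Fourth, the Poisson map is equivariant, $J(T(s)u) = dT(s)J(u)(dT(s))^*$. The constant block $\widehat J$ of \eqref{defn J hat} commutes with $dT(s)$ in the appropriate sense, since translation commutes with the scalar multipliers and the $2\times 2$ blocks act only on the trivial $\mathbb{R}^2$ factors; for $B(u) = \mathrm{Id}_{\mathbb{X}} + Z(u)$ one uses the translation-equivariance of $\xi = -\nabla_{\bar x}\Theta$ and $\zeta = -\nabla_{\bar y}\Theta$ recorded above, together with the translation invariance of the pairings $\langle \xi|_{S_t}, \dot\eta\rangle$ and $\langle \zeta|_{S_t}, \dot\eta\rangle$, to read off $Z(T(s)u) = dT(s)Z(u)(dT(s))^{-1}$ on $\mathbb{X}$ and on $\mathbb{W}$; since $(dT(s))^* = (dT(s))^{-1}$ on $\mathbb{X}^*$ by unitarity (tracking the identification $I : \mathbb{X} \to \mathbb{X}^*$), this gives the claim. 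Finally, comparing \eqref{expression JDP} with \eqref{defn_infinitesimal generator T} shows $J(u)\nabla P(u) = T'(0)u$ for $u \in \mathcal{D}(T'(0))$, the last ingredient of Assumption~\ref{Assumption symmetry group}.

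The only point requiring genuine care is the equivariance of the state-dependent part $Z(u)$ of $J$: each of its entries is built from restrictions to the moving surface $S_t$ and from $\mathbb{X}$--$\mathbb{X}^*$ pairings, so one must verify term by term how conjugation by $dT(s)$ acts and confirm the adjoint identity against the duality pairing twisted by $I$. Everything else reduces, after recording the translation-equivariance of $\mathcal{G}(\eta)$ and of $\Theta$, $\Gamma$, $\xi$, $\zeta$, to the translation invariance of Lebesgue integration over $\mathbb{R}$ and of Sobolev norms, and is routine.
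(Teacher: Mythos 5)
Your clause-by-clause verification is essentially the argument the paper intends: the paper itself writes out no proof here, deferring to \cite[Lemma 5.4]{Varholm_Wahlen_Walsh2018} with the remark that the symmetry groups are essentially the same, and that reference proceeds exactly as you do --- recording the translation-equivariance of $\mathcal{G}(\eta)$, $\Theta$, $\xi$, $\zeta$ and hence of $E$, $P$, $\widehat{J}$, and $B(u)$, and reducing everything else to the fact that translation is a strongly continuous unitary group on $L^2$-based Sobolev spaces. Your sketch is correct as far as it goes; the only clauses of Assumption~\ref{Assumption symmetry group} you do not explicitly address are (v), the affine-part bound (trivial here, with $w(t)=t$, since $T(s)0=(0,0,se_1,se_1)^T$ has identical $\mathbb{X}$- and $\mathbb{W}$-norms), and (viii), the density of $\mathcal{D}(T'(0)|_{\mathbb{W}})\cap\mathrm{Rng}\,\widehat{J}$ in $\mathbb{X}$ (which holds because, e.g., smooth rapidly decaying data whose Fourier transforms vanish near the origin lie in both sets), and both are routine.
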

The proof of this lemma is done by nearly identical arguments to that of  \cite[Lemma 5.4]{Varholm_Wahlen_Walsh2018}, as the symmetry groups are essentially the same.  We therefore omit the details.

Finally, recall that 
\begin{equation} \label{Uc:with_I}
	\left\{ U_c = (\eta(c),\varphi(c),\bar{x}(c),\bar{y}(c)) : c \in \mathcal{I}\right \}
\end{equation}
is a one-parameter family of solitary capillary-gravity water waves with a finite dipole constructed in Theorem~\ref{Theorem existence waves 2 point votices phantom diff vor str}, where we fix $\gamma_1$, $\gamma_2$, $\epsilon$, and vary the wave speed $c$.  Here $\mathcal{I}$ is an open interval containing 
\[
	c_0 = -\frac{\epsilon \gamma_1}{4\pi (a_0-\rho_0)} + \frac{\epsilon \gamma_2}{4\pi} \left( \frac{1}{a_0}+\frac{1}{\rho_0} \right).
\]

The next lemma confirms that this family satisfies Assumption~\ref{Assumption bound states} of the general stability theory.  

\begin{lemma}
Fix any choice of $0 < \rho_0 < a_0$, and consider the corresponding surface of solutions $\mathscr{C}_{\mathrm{loc}}$ furnished by Theorem~\ref{Theorem existence waves 2 point votices phantom diff vor str}.  Then the corresponding one-parameter family of bound states $\{ U_c \}_{c \in \mathcal{I}}$  satisfies Assumption~\ref{Assumption bound states}.
\end{lemma}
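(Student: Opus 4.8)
The plan is to verify, one by one, the conditions making up Assumption~\ref{Assumption bound states}: that $c\mapsto U_c$ is a $C^1$ curve into $\mathcal O\cap\mathbb W$ (and into the energy space $\mathbb X$), that each $U_c$ is a bound state in the sense $\nabla E(U_c)=c\,\nabla P(U_c)$ in $\mathbb X^*$, and that the symmetry orbit through $U_c$ is genuinely one-dimensional, $T'(0)U_c\ne 0$.

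For the regularity and the curve I would argue as follows. Theorem~\ref{Theorem existence waves 2 point votices phantom diff vor str} holds for every $k>\tfrac32$, and by the uniqueness clause of the implicit function theorem the solution it produces is independent of $k$, so I may fix $k$ as large as convenient. Freezing $\epsilon,\gamma_1,\gamma_2$ and letting $c$ range over the interval $\mathcal I$ about $c_0$ then gives a $C^1$ map $c\mapsto U_c\in X$; since $X\hookrightarrow\mathbb W$, $X\hookrightarrow\mathbb V$, and $X\hookrightarrow\mathbb X$ continuously once $k$ is large, this is automatically a $C^1$ curve into each of these spaces, with $\partial_c U_c$ computed by implicit differentiation as in \eqref{asymptotic forms traveling wave soln Uc}. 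Membership in $\mathcal O$ for all sufficiently small $\epsilon$ I would read off the same asymptotics: in the traveling frame $\bar x=(0,-a(\epsilon)+\rho(\epsilon))$ and $\bar y=(0,-a(\epsilon)-\rho(\epsilon))$ with $a(\epsilon)\to a_0$ and $\rho(\epsilon)\to\rho_0\in(0,a_0)$, while $\eta=O(\epsilon^2)$ in $C^1(U;X_1)\hookrightarrow C^0$, so the inequalities in \eqref{O space} hold and $\bar x-\bar y=(0,2\rho(\epsilon))\ne 0$; since $\mathcal O$ is invariant under $T$, the same holds in any frame.

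The crux is the bound-state identity, and here the strategy is to go through the Hamiltonian formulation. The traveling wave $u(t):=T(ct)U_c$ solves \eqref{equation eta phi x_bar y_bar} by construction, and with $k$ large it lies in $C^1([0,\infty);\mathcal O\cap\mathbb W)$, so Theorem~\ref{Theorem Hamiltonian formulation} gives $\partial_t u=J(u)\nabla E(u)$. On the other hand the affine one-parameter group structure of $T$ gives $\partial_t u(t)=c\,T'(0)u(t)$ with $T'(0)$ the generator \eqref{defn_infinitesimal generator T}, and \eqref{expression JDP} says precisely that $J(v)\nabla P(v)=T'(0)v$. Combining, $J(u(t))\big[\nabla E(u(t))-c\,\nabla P(u(t))\big]=0$, and at $t=0$ this reads $J(U_c)\big[\nabla E(U_c)-c\,\nabla P(U_c)\big]=0$. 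To upgrade this to the pointwise identity I would use that $J(U_c)=B(U_c)\widehat J$ with $\widehat J$ injective on $\mathcal D(\widehat J)$ and $B(U_c)$ an isomorphism (Lemma~\ref{Lemma Properties of J}); equivalently, reading the equation block by block, the first block is just the kinematic equation and yields $E'_\varphi(U_c)=c\,P'_\varphi(U_c)$, invertibility of $\mathcal J$ in the third and fourth blocks then forces $E'_{\bar x}(U_c)=c\,P'_{\bar x}(U_c)$ and $E'_{\bar y}(U_c)=c\,P'_{\bar y}(U_c)$, and finally the second block forces $E'_\eta(U_c)=c\,P'_\eta(U_c)$. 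Either way one concludes $\nabla E(U_c)=c\,\nabla P(U_c)$ in $\mathbb X^*$, so $U_c$ is a bound state.

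Nondegeneracy of the orbit is immediate: the last two components of $T'(0)U_c=(-\eta_c',-\varphi_c',e_1,e_1)$ equal $e_1\ne 0$, and by \eqref{asymptotic forms traveling wave soln Uc} the curve is non-constant since $\partial_c a|_{c=c_0}=\epsilon a_{0100}$ with $a_{0100}\ne0$ for $\rho_0>0$ and $\epsilon\ne0$. Any remaining items in Assumption~\ref{Assumption bound states} should follow either from the above or by repeating the corresponding verification in \cite{Varholm_Wahlen_Walsh2018} essentially verbatim. I expect the main obstacle to be purely technical regularity bookkeeping: one must choose $k$ in Theorem~\ref{Theorem existence waves 2 point votices phantom diff vor str} large enough that $U_c$ and $T'(0)U_c$ genuinely belong to $\mathbb W$ — so that $t\mapsto T(ct)U_c$ is a bona fide $C^1([0,\infty);\mathcal O\cap\mathbb W)$ solution to which Theorem~\ref{Theorem Hamiltonian formulation} applies — and that $\nabla E(U_c)$ and $\nabla P(U_c)$ lie in $\mathbb X^*$ (and, if the injectivity route is taken, in $\mathcal D(\widehat J)$). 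This amounts to tracking derivative counts through the Dirichlet--Neumann operator and the explicit formulas for $E'_\eta,E'_\varphi,P'_\eta,P'_\varphi$, using the mapping properties of $\mathcal G(\eta)$ recorded earlier together with the $O(\epsilon^2)$ smallness of $(\eta_c,\varphi_c)$ from \eqref{asymptotic forms traveling wave soln Uc}, but it should require nothing beyond standard Sobolev algebra.
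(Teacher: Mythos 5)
Your proposal is correct and follows essentially the same route as the paper: the paper's (much terser) proof likewise reads off part (i) from the smoothness of $\mathscr{C}_{\mathrm{loc}}$ given by the implicit function theorem, part (ii) from the explicit form of $T'(0)U_c$ and the asymptotics \eqref{asymptotic forms traveling wave soln Uc}, and part (iii) from the freedom to take $k$ large in Theorem~\ref{Theorem existence waves 2 point votices phantom diff vor str}. The only item you leave to your catch-all is part (iv) of Assumption~\ref{Assumption bound states}, which the paper dispatches by noting that for solitary waves $\liminf_{|s|\to\infty}\|T(s)U_c-U_c\|_{\mathbb{X}}>0$ trivially (the vortex centers are translated off to infinity), so nothing essential is missing.
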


\begin{proof}
From the proof of Theorem~\ref{Theorem existence waves 2 point votices phantom diff vor str} in Section~\ref{Section existence theory}, it is clear that $\mathscr{C}_{\mathrm{loc}}$ is in fact $C^\infty$, thus $c \mapsto U_c$ is likewise smooth, and part (i) of the assumption holds.  The asymptotic form of the solutions given in \eqref{asymptotic forms traveling wave soln Uc} immediately shows that part (ii) holds.  Part (iii) follows from the fact that the existence theory can be carried out in $H^k$ spaces for any $k > \frac{3}{2}$.  Finally, as these are solitary waves, it is obvious that the second alternative of part (iv) is satisfied.    
\end{proof}

\subsection{Spectrum of the augmented potential} 
\label{subsection spectrum}

We define the augmented Hamiltonian to be
\[
	E_c(u) := E(u) - cP(u).
\]
The \emph{moment of instability} is the scalar-valued function that results from evaluating $E_c$ along the family $\{U_c\}$:
\begin{equation} \label{defn moment of instability}
	d(c) := E_c(U_c) = E(U_c) - cP(U_c).
\end{equation}
By \eqref{Hamiltonian abstract eqn} and \eqref{expression JDP}, we have $JDE(U_c) - cJDP(U_c) = 0$, and hence
\begin{align} \label{DE_c}
	DE_c(U_c) = DE(U_c) - cDP(U_c) = 0.
\end{align}
Thus, each traveling wave $U_c$ is a critical point of the augmented Hamiltonian.  Then differentiating $d$ gives the identity
\[
	d'(c) = \left\langle DE(U_c) - cDP(U_c), \frac{\mathrm{d}U_c}{\mathrm{d}c} \right\rangle - P(U_c) = -P(U_c).
\]
If we differentiate \eqref{DE_c} with respect to $c$, we also obtain
\[
	\left\langle D^2 E_c(U_c) \frac{\mathrm{d}U_c}{\mathrm{d}c}, \; \cdot \; \right\rangle = \left\langle DP(U_c), \; \cdot \; \right\rangle.
\]
As in the work of Grillakis, Shatah, and Strauss \cite{Grillakis_Shatah_Strauss1987,Grillakis_Shatah_Strauss1990}, we must show that the linearized Hamiltonian has Morse index $1$.  That is, $D^2 E_c(U_c)$ can be associated to a bounded self-adjoint operator on $\mathbb{X}$ whose spectrum takes the form $\{-\mu_c^2\} \cup \{0\} \cup \Sigma_c$, where $\Sigma_c \subset \mathbb{R}_+$ is uniformly bounded away from 0 and $-\mu_c^2 < 0$.  This corresponds to Assumption~\ref{Assumption spectrum}.

We first note that $0$ is in the spectrum.  Indeed, for all $s \in \mathbb{R}$, $T(s)U_c$ is also a traveling wave solution.  Therefore,
\[
	DE_c(T(s)U_c) = 0
\]
for all $s$.  Differentiating with respect to $s$ gives
\[
	\left\langle D^2 E_c(T(s)U_c), T'(0)U_c \right\rangle = 0,
\]
and hence, $T'(0)U_c$ is an eigenfunction for eigenvalue $0$.

Following Mielke's approach \cite{Mielke2001}, we will determine the remaining spectrum by first considering the \emph{augmented potential}
\begin{equation} \label{V_c defn}
	V_c = V_c(\eta,\bar{x},\bar{y}) := \min_{\varphi \in \mathbb{V}_2} E_c(\eta,\varphi,\bar{x},\bar{y}) =: E_c(\eta,\varphi_m,\bar{x},\bar{y})
\end{equation}
for $(\eta,\bar x,\bar y) \in \mathbb{V}_1 \times \mathbb{V}_3 \times \mathbb{V}_4$, which corresponds to fixing $(\eta,\bar{x},\bar{y})$ and minimizing $E_c$ over $\varphi$.  Thus,
\begin{equation} \label{eqn:D_phi E_c = 0}
	D_\varphi E_c(\eta,\varphi_m,\bar{x},\bar{y}) = 0.
\end{equation}
Because $\varphi$ occurs quadratically in the energy, it is easy to see that this minimum is attained exactly when
\begin{align} \label{varphi_m}
	\varphi_m(\eta,\bar{x},\bar{y}) = \mathcal{G}(\eta)^{-1}[-c\eta'-\epsilon\nabla_\perp\Theta].
\end{align}

Since we will be doing many calculations where $\varphi$ is fixed, we adopt the notational convention that for $u = (u_1,u_2,u_3,u_4)$,
\[
	v := (u_1,u_3,u_4)
\]
and write a variation in the direction $v$ as $\dot v$.  We also use the short hand $\mathbb{V}_{1,3,4} := \mathbb{V}_1 \times \mathbb{V}_3 \times \mathbb{V}_4$, and for convenience, define 
\[
	u_m(v):=(\eta,\varphi_m,\bar x,\bar y) \in \mathbb{V}.
\]
Also, when we evaluate derivatives of the Dirichlet--Neumann operator, we will encounter the quantities 
\[
	\mathfrak{a} := (\nabla(\mathcal{H}\varphi_m))|_S, \quad
	\mathfrak{b} := \mathfrak{a} + \epsilon (\nabla\Theta)|_S - ce_1.
\]
See Appendix~\ref{Appendix steady and unsteady equations} for an explicit formula giving $\mathfrak{a}$ in terms of $\varphi$ and $\eta$.  Physically, $\mathfrak{b}$ is the restriction of the full relative velocity to the interface.  Therefore, by the kinematic condition, $\mathfrak{b}_2 = \eta' \mathfrak{b}_1$; this also follows directly from \eqref{varphi_m}.

While it is not completely obvious, we will see that the spectral properties of $D^2 E_c(U_c)$ can be inferred from those of $D^2 V_c(v)$.  With that in mind, the first step is to derive a formula for the second variation of the augmented potential.  

\begin{lemma} \label{Lemma:D^2V_c:defn_mathcal_L}
For all $v \in \mathbb{V}_{1,3,4} \cap \mathcal{O}_{1,3,4}$ and all variations $\dot v \in \mathbb{V}_{1,3,4}$, we have
\begin{multline} \label{D^2 mathcal V aug c simple}
	\left\langle D^2 V_c(v)\dot v, \dot v \right\rangle_{\mathbb{V}^*_{1,3,4} \times \mathbb{V}_{1,3,4}} = - \left\langle \mathcal{L}(v) \dot v, \mathcal{G}(\eta)^{-1} \mathcal{L}(v) \dot v \right\rangle_{\mathbb{X}_2^* \times \mathbb{X}_2} \\
	+ \left\langle D^2_v E_c(u_m(v)) \dot v, \dot v \right\rangle_{\mathbb{V}^*_{1,3,4} \times \mathbb{V}_{1,3,4}},
\end{multline}
where $\mathcal{L}(v) \in \mathrm{Lin}(\mathbb{X}_{1,3,4};\mathbb{X}^*_2)$ defined by
\begin{align} \label{defn mathcal L}
	\mathcal{L}(v) \dot v := \mathcal{G}(\eta)(\mathfrak{a}_2\dot\eta) + (\mathfrak{b}_1\dot\eta)' + \epsilon\nabla_\perp\xi \cdot \dot{\bar x} + \epsilon\nabla_\perp\zeta \cdot \dot{\bar y}.
\end{align}
\end{lemma}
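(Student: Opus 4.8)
The plan is to view $V_c$ as the partial minimization of $E_c$ over the $\varphi$-variable and to read off $D^2 V_c$ as the associated reduced Hessian (Schur complement). Write $v=(\eta,\bar x,\bar y)$ and $u_m(v)=(\eta,\varphi_m,\bar x,\bar y)$ with $\varphi_m=\varphi_m(v)$ as in \eqref{varphi_m}. Since $\varphi$ enters $E_c$ only through the quadratic form $\frac12\int_\mathbb{R}\varphi\,\mathcal{G}(\eta)\varphi\,\mathrm{d}x_1+\epsilon\int_\mathbb{R}\varphi\,\nabla_\perp\Theta\,\mathrm{d}x_1+c\int_\mathbb{R}\eta'\varphi\,\mathrm{d}x_1$, and $\mathcal{G}(\eta)$ is positive and invertible on $\dot{H}^{1/2}$ for $\eta\in\mathbb{V}_1$, the map $\varphi\mapsto E_c(\eta,\varphi,\bar x,\bar y)$ is a strictly convex quadratic; hence $\varphi_m$ is its unique minimizer, $D^2_\varphi E_c(u_m(v))=\mathcal{G}(\eta)$, and \eqref{eqn:D_phi E_c = 0} reads $\mathcal{G}(\eta)\varphi_m+\epsilon\nabla_\perp\Theta+c\eta'=0$. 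Smoothness of $\eta\mapsto\mathcal{G}(\eta)$ makes $v\mapsto\varphi_m(v)$ smooth with the mapping properties needed for $V_c=E_c\circ u_m$ to be twice differentiable on $\mathbb{V}_{1,3,4}\cap\mathcal{O}_{1,3,4}$.

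First I would run the chain rule. Differentiating $V_c=E_c(u_m(v))$ once and using $D_\varphi E_c(u_m(v))=0$ gives $DV_c(v)=D_v E_c(u_m(v))$. Differentiating a second time, the term carrying $D^2\varphi_m$ again drops because $D_\varphi E_c(u_m(v))=0$, leaving the reduced-Hessian identity $\langle D^2 V_c(v)\dot v,\dot v\rangle=\langle D^2_v E_c(u_m(v))\dot v,\dot v\rangle+2\langle\Lambda\dot v,D\varphi_m(v)\dot v\rangle+\langle\mathcal{G}(\eta)D\varphi_m(v)\dot v,D\varphi_m(v)\dot v\rangle$, where $\Lambda:=D_v D_\varphi E_c(u_m(v))$. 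Implicit differentiation of $D_\varphi E_c(u_m(v))=0$ in $v$ yields $D\varphi_m(v)\dot v=-\mathcal{G}(\eta)^{-1}\Lambda\dot v$; substituting collapses the last two terms into $-\langle\Lambda\dot v,\mathcal{G}(\eta)^{-1}\Lambda\dot v\rangle$, so that $\langle D^2 V_c(v)\dot v,\dot v\rangle=\langle D^2_v E_c(u_m(v))\dot v,\dot v\rangle-\langle\Lambda\dot v,\mathcal{G}(\eta)^{-1}\Lambda\dot v\rangle$. It then suffices to identify $\Lambda$ with $-\mathcal{L}(v)$; the overall sign is irrelevant in the quadratic form.

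The substantive part is computing $\Lambda\dot v=D_v\bigl(\mathcal{G}(\eta)\varphi+\epsilon\nabla_\perp\Theta+c\eta'\bigr)\big|_{\varphi=\varphi_m}\dot v$. In the $\bar x$ and $\bar y$ directions this is immediate from $\nabla_{\bar x}\Theta=-\xi$ and $\nabla_{\bar y}\Theta=-\zeta$, producing $-\epsilon\nabla_\perp\xi\cdot\dot{\bar x}$ and $-\epsilon\nabla_\perp\zeta\cdot\dot{\bar y}$. In the $\eta$ direction I would invoke the shape derivative of the Dirichlet--Neumann operator --- the operator form of the identity already used in the proof of Theorem~\ref{Theorem Hamiltonian formulation} (see \cite[Proposition 2.1]{Mielke2001}, \cite{Lannes2013}) --- namely $\bigl(D_\eta\mathcal{G}(\eta)[\dot\eta]\bigr)\varphi_m=-\mathcal{G}(\eta)(\mathfrak{a}_2\dot\eta)-(\mathfrak{a}_1\dot\eta)'$ with $\mathfrak{a}=\nabla(\mathcal{H}\varphi_m)|_S$; then differentiate $\epsilon\nabla_\perp\Theta=\epsilon(-\eta'\Theta_{x_1}+\Theta_{x_2})|_S$ in $\eta$, using harmonicity of $\Theta$ ($\Theta_{x_2x_2}=-\Theta_{x_1x_1}$) to rewrite the outcome as $-\epsilon(\Theta_{x_1}|_S\,\dot\eta)'$; and add the trivial contribution $c\dot\eta'$. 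Recalling $\mathfrak{b}_1=\mathfrak{a}_1+\epsilon\Theta_{x_1}|_S-c$, these three pieces assemble precisely into $-\mathcal{G}(\eta)(\mathfrak{a}_2\dot\eta)-(\mathfrak{b}_1\dot\eta)'$, so $\Lambda\dot v=-\mathcal{L}(v)\dot v$ with $\mathcal{L}$ as in \eqref{defn mathcal L}. A final short check, using that $\eta\in\mathbb{V}_1$ is sufficiently regular (and, if convenient, the kinematic identity $\mathfrak{b}_2=\eta'\mathfrak{b}_1$), confirms $\mathcal{L}(v)\in\mathrm{Lin}(\mathbb{X}_{1,3,4};\mathbb{X}_2^*)$ and that all pairings above are legitimate.

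I expect the only genuine obstacle to be the bookkeeping in the $\eta$-variation: applying the shape derivative of $\mathcal{G}(\eta)$ correctly at $\varphi=\varphi_m$ and then reorganizing the $\Theta$- and $c\eta'$-contributions --- which hinges on harmonicity of $\Theta$ and the precise definitions of $\mathfrak{a}$ and $\mathfrak{b}$ --- so that they condense into the compact form \eqref{defn mathcal L}. By contrast, the envelope/Schur-complement step and the vortex-center directions are routine.
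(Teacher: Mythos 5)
Your proposal is correct and is essentially the argument the paper intends: the paper omits the proof, citing it as a straightforward adaptation of \cite[Lemma 6.2]{Varholm_Wahlen_Walsh2018}, and that adaptation is precisely your envelope/Schur-complement computation for the partially minimized $E_c$, together with the identification of the mixed derivative $D_vD_\varphi E_c(u_m(v))$ with $-\mathcal{L}(v)$ (consistent with the formula $\langle D_\varphi D_v E_c(u_m(v))\dot v,\dot\varphi\rangle=-\int_{\mathbb{R}}\dot\varphi\,\mathcal{L}(v)\dot v\,\mathrm{d}x_1$ the paper uses later in Lemma~\ref{Lemma extension D^2 Ec}). Your $\eta$-direction bookkeeping --- the shape derivative of $\mathcal{G}(\eta)$ at $\varphi_m$, harmonicity of $\Theta$, and the $c\dot\eta'$ term assembling into $-\mathcal{G}(\eta)(\mathfrak{a}_2\dot\eta)-(\mathfrak{b}_1\dot\eta)'$ --- checks out.
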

The proof follows by a straightforward adaptation of \cite[Lemma 6.2]{Varholm_Wahlen_Walsh2018}, and we therefore omit it.  In the next lemma, we refine expression \eqref{D^2 mathcal V aug c simple} to derive a quadratic form representation of $D^2 V_c$.

\begin{lemma}[Quadratic form] \label{Lemma D^2 V_c^aug quadratic form}
For all $v \in \mathbb{V}_{1,3,4} \cap \mathcal{O}_{1,3,4}$, there is a self-adjoint linear operator $A(v) \in \mathrm{Lin}(\mathbb{X}_{1,3,4};\mathbb{X}_{1,3,4}^*)$ such that
\[
	\left\langle D^2 V_c(v) \dot v, \dot w \right\rangle_{\mathbb{V}_{1,3,4}^* \times \mathbb{V}_{1,3,4}} = \left\langle A \dot v, \dot w \right\rangle_{\mathbb{X}_{1,3,4}^* \times \mathbb{X}_{1,3,4}}
\]
for all $\dot v, \dot w \in \mathbb{V}_{1,3,4}$.  The form of $A$ is given in \eqref{A expression}.  
\end{lemma}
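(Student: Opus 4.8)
The plan is to start from the formula for $\langle D^2 V_c(v) \dot v, \dot v\rangle$ furnished by Lemma~\ref{Lemma:D^2V_c:defn_mathcal_L} and to rewrite it so that every term is realized as a bounded bilinear form on $\mathbb{X}_{1,3,4} = \mathbb{X}_1 \times \mathbb{X}_3 \times \mathbb{X}_4 = H^1(\mathbb{R}) \times \mathbb{R}^2 \times \mathbb{R}^2$. The two contributions in \eqref{D^2 mathcal V aug c simple} must be treated separately. For the first, $-\langle \mathcal{L}(v)\dot v, \mathcal{G}(\eta)^{-1}\mathcal{L}(v)\dot v\rangle$, I would use the explicit expression \eqref{defn mathcal L} for $\mathcal{L}(v)\dot v$, expand the quadratic form in the four pieces $\mathcal{G}(\eta)(\mathfrak{a}_2 \dot\eta)$, $(\mathfrak{b}_1\dot\eta)'$, $\epsilon\nabla_\perp\xi\cdot\dot{\bar x}$, $\epsilon\nabla_\perp\zeta\cdot\dot{\bar y}$, and commute $\mathcal{G}(\eta)^{-1}$ through. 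The key observation is that $\mathcal{G}(\eta)^{-1}$ maps $\dot{H}^{-1/2}\to\dot{H}^{1/2}$, and that the terms of the form $(\mathfrak{b}_1\dot\eta)'$ pair against $\mathcal{G}(\eta)^{-1}$ of a derivative, which is a smoothing operation — so after one integration by parts and the identity $\mathfrak{b}_2 = \eta'\mathfrak{b}_1$ one finds that all the genuinely singular (order $+1$) contributions cancel, leaving only terms that are at worst of order $0$ in $\dot\eta$, hence bounded on $H^1$ (in fact on $L^2$). The finite-dimensional pieces involving $\dot{\bar x}, \dot{\bar y}$ are manifestly bounded since $\xi,\zeta$ and their surface traces are smooth functions of $(\eta,\bar x,\bar y)$ away from the surface.

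For the second term, $\langle D^2_v E_c(u_m(v))\dot v, \dot v\rangle$, I would appeal to the explicit variational formulas for $\nabla E$ and $\nabla P$ recorded in \eqref{defn nabla E}, \eqref{defn nabla P} (and Appendix~\ref{Appendix variations of E and P}), differentiate once more in the directions $\dot\eta, \dot{\bar x}, \dot{\bar y}$ with $\varphi = \varphi_m$ held fixed, and check term by term that the resulting bilinear form extends continuously from $\mathbb{V}_{1,3,4}$ to $\mathbb{X}_{1,3,4}$. The dangerous terms are the surface-tension contribution $-b(\eta'/\langle\eta'\rangle)'$, whose second variation in $\dot\eta$ produces $b\,\dot\eta''/\langle\eta'\rangle^3$ plus lower-order terms — this is exactly an $H^1\times H^1$ pairing after integration by parts, i.e. it contributes the positive-definite leading part $b\int (\dot\eta')^2\langle\eta'\rangle^{-3}$ — and the term involving $\langle D_\eta\mathcal{G}(\eta)\cdot,\varphi_m\rangle$, whose variation involves second derivatives of $\mathcal{G}$; here I would invoke the standard shape-derivative formulas for the Dirichlet–Neumann operator (as in Lannes \cite{Lannes2013} or \cite[Proposition 2.1]{Mielke2001}) to see that $D^2\mathcal{G}(\eta)$ loses only one derivative, which is absorbed after pairing against the smooth function $\varphi_m$ and using $\varphi_m \in \mathbb{V}_2 \subset \dot H^{1+}$. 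Collecting, one reads off the operator $A(v)$, writes down its explicit form as the display \eqref{A expression}, and records that it is self-adjoint because it arises as a second Fréchet derivative (symmetry of mixed partials) — or, equivalently, by direct inspection of the symmetric bilinear form.

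The main obstacle is the cancellation of the top-order singular terms in the first summand: one must carefully track how $\mathcal{G}(\eta)$ and $\mathcal{G}(\eta)^{-1}$ combine, use the kinematic identity $\mathfrak{b}_2 = \eta'\mathfrak{b}_1$ and the relations between $\nabla_\perp$, $\nabla_\top$ and the surface traces, and show that the apparent $H^1 \to H^{-1}$ (order $+2$) behavior collapses to order $0$. This is precisely the computation carried out in \cite[Section 6]{Varholm_Wahlen_Walsh2018} for the single-vortex case; here the only new feature is the presence of a second vortex, which contributes an extra $\zeta$-term structurally identical to the $\xi$-term, so the argument goes through mutatis mutandis. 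Everything else — the finite-dimensional blocks, the potential-energy block, the self-adjointness — is routine once the regularity bookkeeping is set up.
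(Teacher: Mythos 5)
Your plan coincides with the paper's own proof: both start from the decomposition \eqref{D^2 mathcal V aug c simple} of Lemma~\ref{Lemma:D^2V_c:defn_mathcal_L}, expand $\langle \mathcal{L}(v)\dot v, \mathcal{G}(\eta)^{-1}\mathcal{L}(v)\dot v\rangle$ using the explicit form \eqref{defn mathcal L}, invoke the shape-derivative formulas for $D_\eta\mathcal{G}$ and $D_\eta^2\mathcal{G}$ from \cite[Proposition 2.1]{Mielke2001}, exploit the cancellation of the top-order terms $\int \mathfrak{a}_2\dot\eta\,\mathcal{G}(\eta)(\mathfrak{a}_2\dot\eta)\,\mathrm{d}x_1$ between the two summands, and read off the block operator $A$ in \eqref{A expression}, with self-adjointness following from the symmetry of the bilinear form. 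The argument is correct and is essentially the computation of the paper (which packages the remaining order-one piece as the operator $\mathcal{M}\dot\eta = -\mathfrak{b}_1(\mathcal{G}(\eta)^{-1}(\mathfrak{b}_1\dot\eta)')'$), so no further comparison is needed.
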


\begin{proof}
From \cite[Proposition 2.1]{Mielke2001}, we have
\[
	\int_{\mathbb{R}} \hat{\varphi} \langle D_\eta \mathcal{G}(\eta)\dot{\eta},\varphi \rangle \,\mathrm{d}x_1 = \int_{\mathbb{R}} \dot{\eta} (\mathfrak{a}_1 \hat{\varphi}' - \mathfrak{a}_2 \mathcal{G}(\eta)\hat{\varphi}) \,\mathrm{d}x_1
\] 
and
\[
	\int_{\mathbb{R}} \varphi \langle \langle D^2_\eta \mathcal{G}(\eta)\dot{\eta}, \dot{\eta} \rangle, \varphi \rangle \,\mathrm{d}x_1 = 2 \int_{\mathbb{R}} \Big( \dot{\eta}^2 \mathfrak{a}_1' \mathfrak{a}_2 + \mathfrak{a}_2 \dot{\eta} \mathcal{G}(\eta)(\mathfrak{a}_2 \dot{\eta}) \Big) \,\mathrm{d}x_1.
\]
Letting the self-adjoint operator $\mathcal{M}$ defined by
\[
	\mathcal{M}\dot\eta := -\mathfrak{b}_1(\mathcal{G}(\eta)^{-1} (\mathfrak{b}_1\dot{\eta})')',
\]
and using the fact that $\mathcal{G}(\eta)^{-1}$ is a self-adjoint operator, we can compute
\begin{multline*}
	\int_{\mathbb{R}} \mathcal{L}(v)\dot v \mathcal{G}(\eta)^{-1} \mathcal{L}(v)\dot v \,\mathrm{d}x_1 = \int_{\mathbb{R}} \mathfrak{a}_2 \dot\eta \mathcal{G}(\eta)(\mathfrak{a}_2 \eta) \,\mathrm{d}x_1 + \int_{\mathbb{R}} \dot\eta\mathcal{M}\dot\eta \,\mathrm{d}x_1 + \int_{\mathbb{R}} (\mathfrak{a}_2\mathfrak{b}_1' - \mathfrak{a}_2'\mathfrak{b}_1) \dot\eta^2 \,\mathrm{d}x_1 \hfill \\
	+ 2\epsilon\dot{\bar x} \cdot \int_{\mathbb{R}} \left( \mathfrak{a}_2\nabla_\perp\xi - \mathfrak{b}_1 \left( \mathcal{G}(\eta)^{-1}\nabla_\perp\xi \right)' \right) \dot\eta \,\mathrm{d}x_1 
	+ \epsilon^2 \dot{\bar x}^T \left( \int_{\mathbb{R}} \nabla_\perp\xi \odot \mathcal{G}(\eta)^{-1}\nabla_\perp\xi \,\mathrm{d}x_1 \right) \dot{\bar x} \\
	+ 2\epsilon\dot{\bar y} \cdot \int_{\mathbb{R}} \left( \mathfrak{a}_2\nabla_\perp\zeta - \mathfrak{b}_1 \left( \mathcal{G}(\eta)^{-1}\nabla_\perp\zeta \right)' \right) \dot\eta \,\mathrm{d}x_1 
	+ \epsilon^2 \dot{\bar y}^T \left( \int_{\mathbb{R}} \nabla_\perp\zeta \odot \mathcal{G}(\eta)^{-1}\nabla_\perp\zeta \,\mathrm{d}x_1 \right) \dot{\bar y} \\
	+ 2\epsilon^2 \dot{\bar x}^T \left( \int_{\mathbb{R}} \nabla_\perp\xi \odot \mathcal{G}(\eta)^{-1}\nabla_\perp\zeta \,\mathrm{d}x_1 \right) \dot{\bar y},
\end{multline*}
where $x \odot y = (x \otimes y + y \otimes x)/2$ is the symmetric outer product. Next, we have
\[
	\left\langle D^2_\eta E_c(u_m)\dot\eta, \dot\eta \right\rangle = \int_{\mathbb{R}} \mathfrak{a}_2 \dot\eta \mathcal{G}(\eta)(\mathfrak{a}_2 \eta) \,\mathrm{d}x_1 + \int_{\mathbb{R}} \left( g + \epsilon\mathfrak{b}_1\nabla_\top\Theta_{x_2} + \mathfrak{a}_2\mathfrak{b}_1' \right) \dot\eta^2 \,\mathrm{d}x_1 + \int_{\mathbb{R}} \frac{b}{\langle\eta'\rangle^3} (\dot\eta')^2 \,\mathrm{d}x_1,
\]
\begin{flalign*}
	\nabla_{\bar x} \langle D_\eta E_c(u_m), \dot\eta \rangle = \epsilon \int_{\mathbb{R}} (\mathfrak{a}_2 \nabla_\perp\xi - \mathfrak{b}_1 \nabla_\top\xi) \dot\eta \,\mathrm{d}x_1, &&
\end{flalign*}
\begin{flalign*}
	\nabla_{\bar y} \langle D_\eta E_c(u_m), \dot\eta \rangle = \epsilon \int_{\mathbb{R}} (\mathfrak{a}_2 \nabla_\perp\zeta - \mathfrak{b}_1 \nabla_\top\zeta) \dot\eta \,\mathrm{d}x_1, &&
\end{flalign*}
\begin{multline*}
	D^2_{\bar x} E_c(u_m) = 2\epsilon^2 D^2_{\bar x} \Gamma^* - \epsilon\int_{\mathbb{R}} (\mathcal{G}(\eta)\varphi_m D^2_{\bar x}\Theta + \varphi_m' D^2_{\bar x}\Gamma)|_S \,\mathrm{d}x_1 + \epsilon^2\int_{\mathbb{R}} \nabla_\perp\xi \odot \xi \,\mathrm{d}x_1 \\
	- \frac{\epsilon^2}{2} \int_{\mathbb{R}} (\nabla_\perp\Theta D^2_{\bar x}\Theta + \nabla_\top\Theta D^2_{\bar x}\Gamma)|_S \,\mathrm{d}x_1,
\end{multline*}
\begin{multline*}
	D^2_{\bar y} E_c(u_m) = 2\epsilon^2 D^2_{\bar y} \Gamma^* - \epsilon\int_{\mathbb{R}} (\mathcal{G}(\eta)\varphi_m D^2_{\bar y}\Theta + \varphi_m' D^2_{\bar y}\Gamma)|_S \,\mathrm{d}x_1 + \epsilon^2\int_{\mathbb{R}} \nabla_\perp\zeta \odot \zeta \,\mathrm{d}x_1 \\
	- \frac{\epsilon^2}{2} \int_{\mathbb{R}} (\nabla_\perp\Theta D^2_{\bar y}\Theta + \nabla_\top\Theta D^2_{\bar y}\Gamma)|_S \,\mathrm{d}x_1,
\end{multline*}
\begin{flalign*}
	\nabla_{\bar x}\nabla_{\bar y} E_c(u_m) = \epsilon^2 \nabla_{\bar x}\nabla_{\bar y} \Gamma^* + \frac{\epsilon^2}{2} \int_{\mathbb{R}} \nabla_\perp (\xi \odot \zeta) \,\mathrm{d}x_1. &&
\end{flalign*}
Substituting the above results into the expression \eqref{D^2 mathcal V aug c simple}, we arrive at
\begin{multline*}
	\langle D^2 V_c(v) \dot v,\dot v \rangle = \int_{\mathbb{R}} (g + \mathfrak{b}_2' \mathfrak{b}_1) \dot\eta^2 \,\mathrm{d}x_1 - \int_{\mathbb{R}} \left( \frac{b}{\langle\eta'\rangle^3} \dot \eta' \right)' \dot\eta \,\mathrm{d}x_1 - \int_{\mathbb{R}} \dot\eta\mathcal{M}\dot\eta \,\mathrm{d}x_1 \\
	+ 2\epsilon\dot{\bar x} \cdot \int_{\mathbb{R}} \dot\eta \mathfrak{b}_1 \nabla_\top(\mathcal{G}(\eta)^{-1}\nabla_\perp\xi - \xi) \,\mathrm{d}x_1 
	+ 2\epsilon\dot{\bar y} \cdot \int_{\mathbb{R}} \dot\eta \mathfrak{b}_1 \nabla_\top(\mathcal{G}(\eta)^{-1}\nabla_\perp\zeta - \zeta) \,\mathrm{d}x_1 \\
	+ \dot{\bar x}^T \left( D_{\bar x}^2 E_c(u_m) - \epsilon^2 \int_{\mathbb{R}} \nabla_\perp\xi \odot \mathcal{G}(\eta)^{-1}\nabla_\perp\xi \,\mathrm{d}x_1 \right) \dot{\bar x} \\
	+ \dot{\bar y}^T \left( D_{\bar y}^2 E_c(u_m) - \epsilon^2 \int_{\mathbb{R}} \nabla_\perp\zeta \odot \mathcal{G}(\eta)^{-1}\nabla_\perp\zeta \,\mathrm{d}x_1 \right) \dot{\bar y} \\
	+ \dot{\bar x}^T \left( \nabla_{\bar x}\nabla_{\bar y} E_c(u_m) - 2\epsilon^2 \int_{\mathbb{R}} \nabla_\perp\xi \odot \mathcal{G}(\eta)^{-1}\nabla_\perp\zeta \,\mathrm{d}x_1 \right) \dot{\bar y}.
\end{multline*}

Thus, inspecting the above formula, we see that the claimed quadratic form representation holds with the operator $A$ defined as follows:

\begin{subequations} \label{A expression} 
\begin{align} \label{A11:expression}
	A_{11}\dot\eta := (g + \mathfrak{b}_2' \mathfrak{b}_1)\dot\eta - \left( \frac{b}{\langle\eta'\rangle^3}\dot\eta' \right)' - \mathcal{M}\dot\eta,	
\end{align}
\begin{align} \label{A13:expression}
	A_{13}\dot{\bar x} := \epsilon\mathfrak{b}_1 \nabla_\top(\mathcal{G}(\eta)^{-1}\nabla_\perp\xi-\xi) \cdot \dot{\bar x},
\end{align}
\begin{align} \label{A31:expression}
	A_{13}^*\dot\eta := \epsilon \int_{\mathbb{R}} \dot\eta \mathfrak{b}_1 \nabla_\top(\mathcal{G}(\eta)^{-1}\nabla_\perp\xi-\xi) \,\mathrm{d}x_1,
\end{align}
\begin{align} \label{A14:expression}
	A_{14}\dot{\bar y} := \epsilon\mathfrak{b}_1 \nabla_\top(\mathcal{G}(\eta)^{-1}\nabla_\perp\zeta-\zeta) \cdot \dot{\bar y},
\end{align}
\begin{align} \label{A41:expression}
	A_{14}^*\dot\eta := \epsilon \int_{\mathbb{R}} \dot\eta \mathfrak{b}_1 \nabla_\top(\mathcal{G}(\eta)^{-1}\nabla_\perp\zeta-\zeta) \,\mathrm{d}x_1,
\end{align}
\begin{align} \label{A33:expression}
	A_{33} := D_{\bar x}^2 E_c(u_m) - \epsilon^2 \int_{\mathbb{R}} \nabla_\perp\xi \odot \mathcal{G}(\eta)^{-1}\nabla_\perp\xi \,\mathrm{d}x_1,
\end{align}
\begin{align} \label{A44:expression}
	A_{44} := D_{\bar y}^2 E_c(u_m) - \epsilon^2 \int_{\mathbb{R}} \nabla_\perp\zeta \odot \mathcal{G}(\eta)^{-1}\nabla_\perp\zeta \,\mathrm{d}x_1,
\end{align}
\begin{align} \label{A34,A43:expression}
	A_{34} = A_{43} := \nabla_{\bar x}\nabla_{\bar y} E_c(u_m) - \epsilon^2 \int_{\mathbb{R}} \nabla_\perp\xi \odot \mathcal{G}(\eta)^{-1}\nabla_\perp\zeta \,\mathrm{d}x_1.
\end{align}
\end{subequations}
This finishes the proof of Lemma~\ref{Lemma D^2 V_c^aug quadratic form}.
\end{proof}

The next lemma verifies that the second variation of the augmented Hamiltonian $E_c$ has an extension to the energy space $\mathbb{X}$.

\begin{lemma}[Extension of $D^2 E_c$] \label{Lemma extension D^2 Ec}
For all $v \in \mathbb{V}_{1,3,4} \cap \mathcal{O}_{1,3,4}$, there exists a self-adjoint operator $H_c(v) \in \mathrm{Lin}(\mathbb{X},\mathbb{X}^*)$ such that
\begin{align} \label{extension D^2 Ec identity}
	\langle D^2 E_c(u_m(v)) \dot u, \dot w \rangle_{\mathbb{V}^* \times \mathbb{V}} = \langle H_c(v) \dot u, \dot w \rangle_{\mathbb{X}^* \times \mathbb{X}}
\end{align}
for all $\dot u, \dot w \in \mathbb{V}$ with
\[
	H_c(v) \dot u =
	\begin{pmatrix}
	\mathrm{Id}_{\mathbb{X}_1^*}  &  0  &  0  &  0  \\
	0  &  0  &  0  &  \mathrm{Id}_{\mathbb{X}_2^*}  \\
	0  &  \mathrm{Id}_{\mathbb{R}^2}  &  0  &  0  \\
	0  &  0  &  \mathrm{Id}_{\mathbb{R}^2}  &  0 
	\end{pmatrix}
	\begin{pmatrix}
	A(v) + \mathcal{L}(v)^* \mathcal{G}(\eta)^{-1}\mathcal{L}(v)  &  -\mathcal{L}(v)^* \\
	-\mathcal{L}(v)  &  \mathcal{G}(\eta)
	\end{pmatrix}
	\begin{bmatrix}	\dot v \\ \dot \varphi \end{bmatrix},
\]
where $\mathcal{L}(v)$ and $A(v)$ are defined in Lemmas~\ref{Lemma:D^2V_c:defn_mathcal_L} and~\ref{Lemma D^2 V_c^aug quadratic form}, respectively.  The adjoint $\mathcal{L}(v)^* \in \mathrm{Lin}(\mathbb{X}_2; \mathbb{X}_{1,3,4}^*)$ is given by
\[
	\mathcal{L}(v)^* \dot\varphi = (\mathfrak{a}_2\mathcal{G}(\eta)\dot\varphi - \mathfrak{b}_1 \dot\varphi', \epsilon \langle \nabla_\perp(\xi+\zeta),\dot\varphi \rangle),
\]
and we have
\begin{equation} \label{eqn:H_c}
	\langle H_c(v) \dot u,\dot u \rangle_{\mathbb{X}^* \times \mathbb{X}} = \langle A(v) \dot v,\dot v \rangle_{\mathbb{X}_{1,3,4}^* \times \mathbb{X}_{1,3,4}} + \left\langle \mathcal{G}(\eta)(\dot\varphi-\mathcal{G}(\eta)^{-1}\mathcal{L}\dot v), \dot\varphi-\mathcal{G}(\eta)^{-1} \mathcal{L} \dot v \right\rangle_{\mathbb{X}_2^* \times \mathbb{X}_2}
\end{equation}
for all $\dot u \in \mathbb{X}$.
\end{lemma}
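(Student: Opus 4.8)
The plan is to obtain $H_c(v)$ by writing the second derivative $D^2 E_c$ at the lifted configuration $u_m(v)$ in $2\times2$ block form with respect to the splitting $\dot u=(\dot v,\dot\varphi)$, $\dot v=(\dot\eta,\dot{\bar x},\dot{\bar y})$, identifying each block with operators already produced in Lemmas~\ref{Lemma:D^2V_c:defn_mathcal_L} and~\ref{Lemma D^2 V_c^aug quadratic form}, checking that every block extends boundedly between the corresponding energy-space components, and finally recovering the completed-square identity \eqref{eqn:H_c}. Since $E,P\in C^\infty(\mathcal{O}\cap\mathbb{V};\mathbb{R})$, the augmented Hamiltonian $E_c=E-cP$ is smooth on $\mathcal{O}\cap\mathbb{V}$, and $u_m(v)\in\mathcal{O}\cap\mathbb{V}$ whenever $v\in\mathbb{V}_{1,3,4}\cap\mathcal{O}_{1,3,4}$; hence $D^2E_c(u_m(v))$ is a bounded symmetric bilinear form on $\mathbb{V}\times\mathbb{V}$, and the content of the lemma is precisely that it extends continuously to $\mathbb{X}\times\mathbb{X}$ with the stated structure.

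For the blocks I would argue as follows. The variable $\varphi$ enters $E_c$ quadratically only through $K_0=\frac{1}{2}\int_\mathbb{R}\varphi\,\mathcal{G}(\eta)\varphi\,\mathrm{d}x_1$ — the terms $K_1$ and $P$ are affine in $\varphi$, and $K_2,V$ do not involve it — so $D_\varphi^2E_c(u_m(v))=\mathcal{G}(\eta)$. For the mixed block, differentiate $D_\varphi E_c(u)=\mathcal{G}(\eta)\varphi+\epsilon\nabla_\perp\Theta+c\eta'$ in the $v$-directions at $u_m(v)$: using the first-variation formula for the Dirichlet--Neumann operator \cite[Proposition 2.1]{Mielke2001}, harmonicity of $\Theta$, the surface-derivative identities \eqref{identities grad perp and grad T}, and the definitions of $\mathfrak{a}$, $\mathfrak{b}$, $\xi$, $\zeta$, one finds $D_\varphi D_v E_c(u_m(v))=-\mathcal{L}(v)$ with $\mathcal{L}(v)$ as in \eqref{defn mathcal L}; equivalently, differentiating the explicit formula \eqref{varphi_m} for $\varphi_m$ and using \eqref{eqn:D_phi E_c = 0} gives $D_\varphi D_v E_c(u_m(v))=-\mathcal{G}(\eta)\,D_v\varphi_m=-\mathcal{L}(v)$, so the adjoint (upper-right) block is $-\mathcal{L}(v)^*$, and its explicit form follows by pairing \eqref{defn mathcal L} against $\dot\varphi$, integrating $(\mathfrak{b}_1\dot\eta)'$ by parts, and using self-adjointness of $\mathcal{G}(\eta)$. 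Finally the $v$--$v$ block needs no fresh computation: subtracting the formula of Lemma~\ref{Lemma:D^2V_c:defn_mathcal_L} from that of Lemma~\ref{Lemma D^2 V_c^aug quadratic form} gives $D_v^2E_c(u_m(v))=A(v)+\mathcal{L}(v)^*\mathcal{G}(\eta)^{-1}\mathcal{L}(v)$ as bilinear forms on $\mathbb{V}_{1,3,4}$. Reordering the four blocks from the grouping $(\dot\eta,\dot{\bar x},\dot{\bar y},\dot\varphi)$ to $(\dot\eta,\dot\varphi,\dot{\bar x},\dot{\bar y})$ by the permutation matrix displayed in the statement yields exactly the asserted expression for $H_c(v)$, and hence \eqref{extension D^2 Ec identity} for all $\dot u,\dot w\in\mathbb{V}$.

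It then remains to see that $H_c(v)$ is a bounded self-adjoint element of $\mathrm{Lin}(\mathbb{X},\mathbb{X}^*)$ and to verify \eqref{eqn:H_c}. Boundedness: $A(v)\in\mathrm{Lin}(\mathbb{X}_{1,3,4};\mathbb{X}_{1,3,4}^*)$ by Lemma~\ref{Lemma D^2 V_c^aug quadratic form}; $\mathcal{L}(v)\in\mathrm{Lin}(\mathbb{X}_{1,3,4};\mathbb{X}_2^*)$ by Lemma~\ref{Lemma:D^2V_c:defn_mathcal_L}, hence $\mathcal{L}(v)^*\in\mathrm{Lin}(\mathbb{X}_2;\mathbb{X}_{1,3,4}^*)$; and since $\eta\in\mathbb{V}_1\hookrightarrow H^{3/2+}$ with $3/2+>3/2$, the operator $\mathcal{G}(\eta)$ is a bounded invertible self-adjoint isomorphism $\mathbb{X}_2\to\mathbb{X}_2^*$, so $\mathcal{L}(v)^*\mathcal{G}(\eta)^{-1}\mathcal{L}(v)\in\mathrm{Lin}(\mathbb{X}_{1,3,4};\mathbb{X}_{1,3,4}^*)$. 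Thus every block of $H_c(v)$ is bounded between the appropriate components of $\mathbb{X}$, so $H_c(v)\in\mathrm{Lin}(\mathbb{X},\mathbb{X}^*)$. Self-adjointness is immediate from the manifestly symmetric block form — $A(v)$, $\mathcal{L}(v)^*\mathcal{G}(\eta)^{-1}\mathcal{L}(v)$, and $\mathcal{G}(\eta)$ are self-adjoint while the off-diagonal blocks $-\mathcal{L}(v)^*$ and $-\mathcal{L}(v)$ are mutually adjoint — or, alternatively, from symmetry of the bilinear form $D^2E_c(u_m(v))$ together with density of $\mathbb{V}$ in $\mathbb{X}$. For \eqref{eqn:H_c}, I would write out $\langle H_c(v)\dot u,\dot u\rangle$ from the block form, use $\langle\mathcal{L}(v)^*\dot\varphi,\dot v\rangle=\langle\mathcal{L}(v)\dot v,\dot\varphi\rangle$ and self-adjointness of $\mathcal{G}(\eta)$, and complete the square in $\dot\varphi$: the term $\langle\mathcal{L}(v)^*\mathcal{G}(\eta)^{-1}\mathcal{L}(v)\dot v,\dot v\rangle$ cancels against part of the square, leaving $\langle A(v)\dot v,\dot v\rangle+\langle\mathcal{G}(\eta)(\dot\varphi-\mathcal{G}(\eta)^{-1}\mathcal{L}(v)\dot v),\dot\varphi-\mathcal{G}(\eta)^{-1}\mathcal{L}(v)\dot v\rangle$, which is \eqref{eqn:H_c}.

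The main obstacle is the identification $D_\varphi D_v E_c(u_m(v))=-\mathcal{L}(v)$: it is the only step that genuinely invokes the differential calculus of the Dirichlet--Neumann operator and requires careful bookkeeping with the surface geometry ($\nabla_\perp$, $\nabla_\top$, $\mathfrak{a}$, $\mathfrak{b}$), and it must be carried out with the same care as in the proof of Lemma~\ref{Lemma:D^2V_c:defn_mathcal_L} and \cite[Lemma 6.2]{Varholm_Wahlen_Walsh2018}. Everything else is either a direct appeal to the two preceding lemmas or elementary linear algebra.
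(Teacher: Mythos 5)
Your proposal is correct and follows essentially the same route as the paper: identify $D_\varphi^2 E_c = \mathcal{G}(\eta)$ and $D_\varphi D_v E_c(u_m(v)) = -\mathcal{L}(v)$, recover the $v$--$v$ block as $A(v) + \mathcal{L}(v)^*\mathcal{G}(\eta)^{-1}\mathcal{L}(v)$ by combining Lemmas~\ref{Lemma:D^2V_c:defn_mathcal_L} and~\ref{Lemma D^2 V_c^aug quadratic form}, and complete the square using self-adjointness of $\mathcal{G}(\eta)$ and $\mathcal{G}(\eta)^{-1}$ to obtain \eqref{eqn:H_c}. The only difference is that you spell out the boundedness and self-adjointness of the blocks and offer an alternative derivation of the cross term via $D_v\varphi_m$, both of which the paper leaves implicit.
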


\begin{proof}
It is straightforward to see that
\[
	\langle D_\varphi D_v E_c(u_m(v)) \dot v,\dot\varphi \rangle_{\mathbb{V}_2^* \times \mathbb{V}_2} = - \int_{\mathbb{R}} \dot v \mathcal{L}(v) \dot v \,\mathrm{d}x_1
\]
holds for all $\dot v \in \mathbb{V}_{1,3,4}$ and $\dot\varphi \in \mathbb{V}_2$.  Because of symmetry, it suffices to consider only the diagonal entries.  For all $\dot u \in \mathbb{V}$, Lemmas~\ref{Lemma:D^2V_c:defn_mathcal_L} and \ref{Lemma D^2 V_c^aug quadratic form} give
\begin{align} \label{D2 E_c Lemma Extension}
\begin{split}
	\langle D^2 E_c(u_m(v)) \dot u , \dot u \rangle_{\mathbb{V}^* \times \mathbb{V}} & =  \langle D_v^2 E_c(u_m(v)) \dot v, \dot v \rangle + 2 \langle D_\varphi D_v E_c(u_m(v)) \dot v, \dot\varphi \rangle \\
	& \qquad + \langle D_\varphi^2 E_c(u_m(v)) \dot\varphi, \dot\varphi \rangle \\
	&= \langle A(v) \dot v, \dot v \rangle_{\mathbb{X}_{1,3,4}^* \times \mathbb{X}_{1,3,4}} \\ 
	& \qquad + \int_{\mathbb{R}} \Big[ (\mathcal{L}(v) \dot v) \mathcal{G}(\eta)^{-1} \mathcal{L}(v) \dot v - 2 \dot\varphi \mathcal{L}(v) \dot v + \dot\varphi \mathcal{G}(\eta) \dot\varphi \Big] \,\mathrm{d}x_1 \\
	&= \langle A(v) \dot v, \dot v \rangle_{\mathbb{X}_{1,3,4}^* \times \mathbb{X}_{1,3,4}} - \int_{\mathbb{R}} \mathcal{L}(v) \dot v  (\dot\varphi - \mathcal{G}(\eta)^{-1} \mathcal{L}(v) \dot v) \,\mathrm{d}x_1 \\
	& \qquad + \int_{\mathbb{R}}  \dot\varphi \mathcal{G}(\eta) (\dot\varphi - \mathcal{G}(\eta)^{-1} \mathcal{L}(v) \dot v)  \,\mathrm{d}x_1. \\
\end{split}
\end{align}
Using the fact that $\mathcal{G}(\eta)$ and $\mathcal{G}(\eta)^{-1}$ are self-adjoint operators, the integral is equal to
\begin{align*}
	&\int_{\mathbb{R}} \Big[ -\mathcal{L}(v) \dot v (\dot\varphi - \mathcal{G}(\eta)^{-1} \mathcal{L}(v) \dot v) + (\dot\varphi - \mathcal{G}(\eta)^{-1} \mathcal{L}(v) \dot v) \mathcal{G}(\eta) \dot\varphi \Big] \,\mathrm{d}x_1 \\
	& \qquad = \int_{\mathbb{R}} \mathcal{G}(\eta)(\dot\varphi - \mathcal{G}(\eta)^{-1} \mathcal{L}(v) \dot v) (\dot\varphi - \mathcal{G}(\eta)^{-1} \mathcal{L}(v) \dot v) \,\mathrm{d}x_1.
\end{align*}
Substituting this into the equation \eqref{D2 E_c Lemma Extension} yields our desired result.  
\end{proof}

We finish this subsection by showing that Assumption~\ref{Assumption spectrum} is satisfied.

\begin{thm}[Spectrum] \label{Theorem spectrum I inverse Hc}
Fix any choice of $0 < \rho_0 < a_0$ subject to the compatibility condition \eqref{equation eta phi x_bar y_bar}, and consider the family of traveling wave solutions $\{U_c\}_{c \in \mathcal{I}}$ as in \eqref{Uc:with_I}.  Then for all $c \in \mathcal{I}$, $I^{-1}H_c$ has one negative eigenvalue, $0$ is in the spectrum, and the rest of the spectrum $\Sigma_c \subset (0,\infty)$ is bounded away from $0$.
\end{thm}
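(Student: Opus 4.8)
\emph{Proof strategy.}
The plan is to peel the infinite-dimensional spectral problem for $I^{-1}H_c$ down to the inertia of a single $4\times 4$ real symmetric matrix — the Hessian of the half-plane Kirchhoff--Routh function of the dipole — and then to compute that inertia from the wave asymptotics and the compatibility condition~\eqref{eqn:compatibility condition}.

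First I would invoke Lemma~\ref{Lemma extension D^2 Ec}. Writing $v_c:=(\eta(c),\bar x(c),\bar y(c))$ so that $u_m(v_c)=U_c$, the identity~\eqref{eqn:H_c} shows that after the bounded invertible change of variables replacing $\dot\varphi$ by $\dot\varphi-\mathcal{G}(\eta)^{-1}\mathcal{L}(v_c)\dot v$ the quadratic form of $H_c$ becomes $\langle A(v_c)\dot v,\dot v\rangle+\langle\mathcal{G}(\eta)\dot\varphi,\dot\varphi\rangle$, i.e. $H_c$ is congruent to the block-diagonal operator $A(v_c)\oplus\mathcal{G}(\eta(c))$. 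Since $\mathcal{G}(\eta(c))$ is bounded, invertible, positive and self-adjoint, the operator form of Sylvester's law of inertia (used this way by Mielke~\cite{Mielke2001} and in \cite{Varholm_Wahlen_Walsh2018}) reduces the negative-eigenvalue count, the nullity, and the positivity-with-a-gap of $I^{-1}H_c$ to the same statements for $I_{1,3,4}^{-1}A(v_c)$ on $\mathbb{X}_{1,3,4}$. Once $A_{11}(v_c)$ is known to be uniformly positive and invertible, a further congruence (completing the square in $\dot\eta$) reduces these to the inertia of the $4\times 4$ Schur complement $S$ of the $\eta$-block of $A(v_c)$, the remaining spectrum of $I^{-1}H_c$ being then confined to $(0,\infty)$ and bounded below for each fixed small $\epsilon$ (uniformly in $c\in\mathcal{I}$ after shrinking $\mathcal{I}$).

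Next I would expand $A(v_c)$ in $\epsilon$ using~\eqref{asymptotic forms traveling wave soln Uc}: since $\eta(c)=O(\epsilon^2)$ we get $\varphi_m(v_c)=O(\epsilon^3)$ from~\eqref{varphi_m}, hence $\mathfrak{a}=O(\epsilon^3)$ and $\mathfrak{b}=O(\epsilon)$. The method-of-images structure gives the flat-surface identities $\Gamma|_{\{x_2=0\}}\equiv 0$, $\nabla_\perp\Theta|_{\{x_2=0\}}=0$, $\nabla_\perp\xi|_{\{x_2=0\}}=\nabla_\perp\zeta|_{\{x_2=0\}}=0$ and $D^2_{\bar x}\Gamma|_{\{x_2=0\}}=D^2_{\bar y}\Gamma|_{\{x_2=0\}}=0$, which make all of the interface integrals in~\eqref{A11:expression}--\eqref{A34,A43:expression} of size $O(\epsilon^4)$ along the solution curve, except those built from $\mathfrak{b}$ alone. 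Consequently $A_{11}(v_c)=g-b\,\partial_{x_1}^2+O_{\mathrm{Lin}(\mathbb{X}_1,\mathbb{X}_1^*)}(\epsilon^2)$ is uniformly positive with essential spectrum (after applying $I_1^{-1}$) a compact subinterval of $(0,\infty)$; the coupling blocks $A_{13}(v_c),A_{14}(v_c)$ carry a factor $\mathfrak{b}_1$ and are $O(\epsilon^2)$; and the $4\times 4$ vortex block, and hence $S$, equals $\epsilon^2 M_c+O(\epsilon^4)$. Moreover, along the solution curve the energy on the flat surface collapses, $\epsilon^{-2}E_c(u_m)\big|_{\eta=0}=\Gamma^*+\widetilde{c}_0(\gamma_1\bar x_2-\gamma_2\bar y_2)+O(\epsilon^2)$, so that $M_c=D^2_{(\bar x,\bar y)}\Gamma^*$ evaluated at the steady configuration $\bar x=(0,-a_0+\rho_0)$, $\bar y=(0,-a_0-\rho_0)$ — the Hessian of the Kirchhoff--Routh function of two point vortices of strengths $\gamma_1,-\gamma_2$ in $\{x_2<0\}$. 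That this configuration is a critical point of the reduced energy is, via the sum-of-cubes identity, exactly~\eqref{eqn:compatibility condition}.

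It remains to show $M_c$ has inertia $(1,2,1)$. Because $\Gamma^*$ is invariant under the horizontal shift $\bar x,\bar y\mapsto\bar x+se_1,\bar y+se_1$, the direction $(e_1,e_1)$ is a genuine null vector of $M_c$ (consistent with $A(v_c)(-\eta(c)',e_1,e_1)=0$, from the $T(s)$-invariance of $V_c$), and the reflection $\bar x_1-\bar y_1\mapsto-(\bar x_1-\bar y_1)$ splits the remaining three directions into the horizontal $(\bar x_1,\bar y_1)$-block, which equals $\partial_{\bar x_1}^2\Gamma^*\cdot\bigl(\begin{smallmatrix}1&-1\\-1&1\end{smallmatrix}\bigr)$ with $\partial_{\bar x_1}^2\Gamma^*|_{\mathrm{eq}}=\tfrac{\gamma_1\gamma_2}{2\pi}\bigl(\tfrac1{(\bar x_2+\bar y_2)^2}-\tfrac1{(\bar x_2-\bar y_2)^2}\bigr)<0$ (hence eigenvalues $0$ and $2\partial_{\bar x_1}^2\Gamma^*<0$, the single unstable mode), and the vertical $2\times 2$ block $D^2_{(\bar x_2,\bar y_2)}\Gamma^*$. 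A short computation shows the latter has positive diagonal, e.g.\ $\partial^2_{\bar x_2}\Gamma^*=\tfrac{\gamma_1^2}{4\pi\bar x_2^2}+\tfrac{\gamma_1\gamma_2}{2\pi}\bigl(\tfrac1{(\bar x_2-\bar y_2)^2}-\tfrac1{(\bar x_2+\bar y_2)^2}\bigr)>0$, and positive determinant: writing $\gamma_2=\lambda\gamma_1$ and $t=\frac{a_0+\rho_0}{a_0-\rho_0}$, and using~\eqref{eqn:compatibility condition}, one finds $\det D^2_{(\bar x_2,\bar y_2)}\Gamma^*$ proportional (with positive factor) to $\lambda(t-t^{-1})^2+8t+8\lambda^2 t^{-1}-16\lambda$, which is positive since $8t+8\lambda^2 t^{-1}\ge 16\lambda$ by the arithmetic--geometric mean inequality; this is the two-vortex counterpart of $\det\mathscr{T}<0$ from~\eqref{matrix T}. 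Hence the vertical block is positive definite, $M_c$ — and therefore $S$ — has inertia $(1,2,1)$, and so $I^{-1}H_c$ has exactly one negative eigenvalue, a one-dimensional kernel spanned by $T'(0)U_c$, and the rest of its spectrum in $(0,\infty)$ bounded away from $0$; this is Assumption~\ref{Assumption spectrum}. The main obstacle is the $\epsilon$-bookkeeping in the reduction $H_c\rightsquigarrow M_c$ — verifying that only the point-vortex Hessian $D^2_{(\bar x,\bar y)}\Gamma^*$ survives at order $\epsilon^2$ among the many interface integrals of~\eqref{A11:expression}--\eqref{A34,A43:expression} — which rests on the flat-surface cancellations above, a first-order expansion of $\varphi_m(v_c)$, and uniform control of the $O(\epsilon^4)$ remainders in $\mathrm{Lin}(\mathbb{X}_{1,3,4};\mathbb{X}_{1,3,4}^*)$ via the $C^1$-dependence of $U_c$ and the refined asymptotics~\eqref{asymptotic forms traveling wave soln Uc}; the subsequent inertia computation for $M_c$ is elementary.
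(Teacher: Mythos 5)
Your proposal is correct and lands on exactly the same finite-dimensional object as the paper: your $M_c=D^2_{(\bar x,\bar y)}\Gamma^*$ evaluated at the steady configuration coincides entry-by-entry with the paper's matrix $\epsilon^{-2}\mathcal{A}$ (e.g.\ $\partial_{\bar x_1}^2\Gamma^*=-\alpha/4\pi$, $\partial_{\bar x_2}^2\Gamma^*=(\delta_1+\alpha)/4\pi$ in the paper's notation), and your inertia count $(1,2,1)$, with the single negative direction being the antisymmetric horizontal mode, is precisely the paper's conclusion. The organization differs: the paper expands $H_c=\mathrm{diag}(g-b\partial_{x_1}^2,\,|\partial_{x_1}|,\,\mathcal{A})+O(\epsilon^3)$ and tracks the four eigenvalues bifurcating from $0$, computing the eigenvalues of $\mathcal{A}$ explicitly and checking the sign of the smallest one via the compatibility condition; you instead perform two exact congruences (eliminating $\dot\varphi$ via \eqref{eqn:H_c} and then $\dot\eta$ via a Schur complement against $A_{11}$) before expanding, and you verify positive-definiteness of the vertical block by diagonal-plus-determinant with an AM--GM estimate. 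Your route buys a cleaner treatment of the exact zero mode and a conceptual identification of the vortex block as the Kirchhoff--Routh Hessian, obtained by evaluating $E_c(0,\varphi_m,\bar x,\bar y)=\epsilon^2\Gamma^*+\epsilon^2\tilde c(\gamma_1\bar x_2-\gamma_2\bar y_2)$ exactly at the flat surface; the paper's route avoids justifying that congruence preserves ``bounded away from zero.'' Two small cautions. First, your blanket claim that all interface integrals in \eqref{A expression} are $O(\epsilon^4)$ except those built from $\mathfrak{b}$ is not literally true: the surface integrals involving $\nabla_\top\Theta$ and $\xi\odot\zeta$ contribute at order $\epsilon^2$ and are exactly what reduces the $2\epsilon^2 D^2_{\bar x}\Gamma^*$ appearing in $D^2_{\bar x}E_c(u_m)$ down to the net $\epsilon^2 D^2_{\bar x}\Gamma^*$; your flat-surface evaluation of the reduced energy sidesteps this, so the conclusion is unaffected, but the stated cancellation mechanism should be corrected. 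Second, your AM--GM bound shows the vertical block's determinant is positive for \emph{all} $\gamma_1,\gamma_2>0$, so unlike the paper's factored expression it does not actually use \eqref{eqn:compatibility condition}; that is a feature, not a bug, but worth stating accurately.
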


\begin{proof}
From the asymptotic information furnished by the existence theorem \eqref{asymptotic forms traveling wave soln Uc}, we infer that
\[
	\mathfrak{a}_1=O(\epsilon^3), \quad
	\mathfrak{a}_2 = O(\epsilon^3), \quad
	\mathfrak{b}_1 = \mathfrak{a}_1 - c + o(\epsilon^2) = O(\epsilon), \quad
	\mathfrak{b}_2 = \mathfrak{a}_2 + o(\epsilon^2) = O(\epsilon).
\]
Then from Lemmas~\ref{Lemma D^2 V_c^aug quadratic form} and \ref{Lemma extension D^2 Ec}, we can write 
\[
	H_c =
	\begin{pmatrix}
	g - b\partial_{x_1}^2  &  0  &  0  \\
	0  &  |\partial_{x_1}|  &  0  \\
	0  &  0  &  \mathcal{A}
	\end{pmatrix}
	+ O(\epsilon^3) \in \mathrm{Lin}(\mathbb{X}, \mathbb{X}^*),
\]
where
\[
	\mathcal{A} :=
	\begin{pmatrix} 
		\mathcal{A}_{33}  &  \mathcal{A}_{34}  \\  
		\mathcal{A}_{43}  &  \mathcal{A}_{44} \end{pmatrix}
	= \frac{\epsilon^2}{4\pi}
	\begin{pmatrix}
	-\alpha & 0                 & \alpha  & 0 \\
	0       & \delta_1 + \alpha & 0       & -\beta \\
	\alpha  & 0                 & -\alpha & 0 \\
	0       & -\beta            & 0       & \delta_2 + \alpha
	\end{pmatrix},
\]
and
\[
	\alpha := \frac{\gamma_1 \gamma_2}{2} \left(\frac{1}{\rho^2} - \frac{1}{a^2} \right), \quad
	\beta := \frac{\gamma_1 \gamma_2}{2} \left(\frac{1}{\rho^2} + \frac{1}{a^2} \right), \quad
	\delta_1 := \frac{\gamma_1^2}{(a-\rho)^2}, \quad
	\delta_2 := \frac{\gamma_2^2}{(a+\rho)^2}.
\]
Setting $\epsilon = 0$, it follows that $H_c$ has a zero eigenvalue of multiplicity $4$, and the remainder of the spectrum is strictly positive.  Thus, when $0 < |\epsilon| \ll 1$, $I^{-1}H_c$ will have positive spectrum $\Sigma_c \subset (0,\infty)$ along with four eigenvalues bifurcating from $0$.   

To determine these, we look more closely at the matrix $\mathcal{A}$.  In particular, direct computation confirms that it has the eigenvalues:
\[
	0, \quad
	-2\alpha, \quad
	\frac{2\alpha + \delta_1 + \delta_2 + \sqrt{(\delta_1-\delta_2)^2+4\beta^2}}{2}, \quad
	\frac{2\alpha + \delta_1 + \delta_2 - \sqrt{(\delta_1-\delta_2)^2+4\beta^2}}{2}.
\]
We know that $0$ is in the spectrum of $I^{-1} H_c$ due to translation invariance.  Clearly, $-2\alpha < 0$, and the third eigenvalue above is positive.  We claim that the last eigenvalue is also positive.  Indeed,
\[
	2\alpha + \delta_1 + \delta_2 - \sqrt{(\delta_1-\delta_2)^2+4\beta^2} > 0
\]
is equivalent to
\[
	\alpha^2 + \alpha\delta_1 + \alpha\delta_2 + \delta_1\delta_2 - \beta^2 > 0.
\]
Using the compatibility condition \eqref{eqn:compatibility condition}, we compute
\begin{align*}
	\alpha^2 + \alpha\delta_1 + \alpha\delta_2 + \delta_1\delta_2 - \beta^2 &=
	\frac{\gamma_1 \gamma_2}{2} \left( \frac{1}{\rho^2} - \frac{1}{a^2} \right) \left( \frac{\gamma_1^2}{(a-\rho)^2} + \frac{\gamma_2^2}{(a+\rho)^2} \right) - \frac{\gamma_1^2 \gamma_2^2}{a^2 \rho^2} \\
	&= \frac{2(a+\rho)}{(a-\rho)(a^2+a\rho+\rho^2)^2} \gamma_1^4 > 0.
\end{align*}
  We then conclude that, for $|\epsilon| > 0$ sufficiently small, the spectrum of $I^{-1}H_c$ consists of precisely one negative eigenvalue, one zero eigenvalue, and the rest is positive.  
\end{proof}

We have verified all of the assumptions \ref{Assumption space}--\ref{Assumption spectrum} of Varholm--Wahl\'en--Walsh instability theory.  The next subsection shows that $d''(c) < 0$, which implies orbital instability.

\subsection{Proof of Theorem~\ref{Theorem Stability fixed dipole}}
\label{subsection proof of stability (main) theorem}

Using the expressions for the momentum $P$, $\bar{x}_2$, and $\bar{y}_2$, we can compute:
\begin{align*}
	d'(c) &= \epsilon \gamma_1 (-a+\rho) - \epsilon \gamma_2 (-a-\rho) - \int_{\mathbb{R}} \eta(\varphi' + \epsilon\nabla_\top\Theta) \,\mathrm{d}x_1.
\end{align*}
Differentiating once more yields
\begin{align*}
	d''(c) &=  \epsilon \gamma_1 \partial_c  (-a  + \rho) + \epsilon \gamma_2 \partial_c(a+\rho) - \int_{\mathbb{R}} \Big( (\partial_c\eta) (\varphi' + \epsilon\nabla_\top\Theta) + \eta \; \partial_c (\varphi' + \epsilon \nabla_\top\Theta) \Big) \,\mathrm{d}x_1.
\end{align*}
Recalling the definition of $\mathscr{T}$ in \eqref{matrix T}, using the compatibility \eqref{eqn:compatibility condition} and variations for $a$ and $\rho$ in \eqref{a_tilde c rho_tilde c 2 point vortices}, 
we obtain
\begin{align*}
	d''(c) &= -\gamma_1 (a_{\tilde{c}} - \rho_{\tilde{c}}) + \gamma_2 (a_{\tilde{c}} + \rho_{\tilde{c}}) + O(\epsilon^3) \\
	&= -\frac{\gamma_1}{\det\mathscr{T}} \left( -\frac{\gamma_2^0}{2\pi(a_0+\rho_0)^2} + \frac{-\gamma_1^0+\gamma_2^0}{4\pi\rho_0^2} + \frac{\gamma_1^0+\gamma_2^0}{4\pi a_0^2} \right) \\
	&\qquad + \frac{\gamma_2}{\det\mathscr{T}} \left( \frac{\gamma_1^0}{2\pi(a_0-\rho_0)^2} + \frac{-\gamma_1^0+\gamma_2^0}{4\pi\rho_0^2} - \frac{\gamma_1^0+\gamma_2^0}{4\pi a_0^2} \right) + O(\epsilon^3) \\
	&= \frac{\gamma_1^2}{2\pi \det\mathscr{T}} \frac{6 a_0 \rho_0^2}{(a_0+\rho_0)(a_0-\rho_0)^2(a_0^2+a_0\rho_0+\rho_0^2)} + O(\epsilon^3).
\end{align*}
Thus, since $\det\mathscr{T} < 0$, we conclude that $d''(c) < 0$ for $|\epsilon| \ll 1$ and $c=O(\epsilon)$. Hence, Theorem~\ref{Therem VWW instability} tells us that the corresponding water waves $\{U_c\}$ constructed in Theorem~\ref{Theorem existence waves 2 point votices phantom diff vor str} are orbitally unstable.

\section*{Acknowledgements}
This work was supported in part by the National Science Foundation through DMS-1549934 and DMS-1710989.

The author would like to express his sincere gratitude to \thanks{Samuel Walsh} for his continuous support and insightful comments.  The author is also grateful to Roberto Camassa who suggested this problem.


\appendix

\section{Abstract instability theory}
\label{Appendix Varholm--Wahlen--Walsh instability theory}

This section summarizes the instability theory developed by Varholm, Wahl\'en, and Walsh in \cite[Sections 2, 4]{Varholm_Wahlen_Walsh2018}.  We are considering the stability property of an abstract Hamiltonian
\begin{equation} \label{eqn Appendix abstract Hamiltonian}
	\frac{\mathrm{d}u}{\mathrm{d}t} = J(u) DE(u), \quad u|_{t=0} = u_0,
\end{equation}
where $J$ is the Poisson map and $E$ is the energy.  Let $\mathbb{X}$ be a Hilbert space, and $\mathbb{V}$ and $\mathbb{W}$ be reflexive Banach spaces such that
\[
	\mathbb{W} \hookrightarrow \mathbb{V} \hookrightarrow \mathbb{X}.
\]
Let $\mathbb{X}^*$ be the (continuous) dual space of $\mathbb{X}$, which is naturally isomorphic to $\mathbb{X}$ via the mapping $I:\mathbb{X} \to \mathbb{X}^*$. 

\begin{assumption}[Spaces] \label{Assumption space}
There exist constants $\theta \in (0,1]$ and $C>0$ such that
\begin{equation} \label{Inequality interpolation X V W}
	\|u\|_{\mathbb{V}}^3 \le C \|u\|_{\mathbb{X}}^{2+\theta} \|u\|_{\mathbb{W}}^{1-\theta}
\end{equation}
for all $u \in \mathbb{W}$.
\end{assumption}

Let $\mathcal{O} \subset \mathbb{X}$ be an open set.  Suppose that
\[
	J(u) := B(u) \widehat{J},
\]
where for each $u \in \mathcal{O} \cap \mathbb{V}$, $B(u)$ is a bounded linear operator in $\mathbb{X}$, that is, $B(u) \in \mathrm{Lin}(\mathbb{X})$, and $\widehat{J}:\mathcal{D}(J) \subset \mathbb{X}^* \to \mathbb{X}$ is a closed linear operator.

\begin{assumption}[Poisson map] \label{Assumption Poisson map}
\hfill 
\begin{enumerate} [label=(\roman*)]
	\item The domain $\mathcal{D}(\widehat{J})$ is dense in $\mathbb{X}^*$.
	
	\item $\widehat{J}$ is injective.
	
	\item For each $u \in \mathcal{O} \cap \mathbb{V}$, the operator $B(u)$ is bijective.
	
	\item The map $u \mapsto B(u)$ is of class $C^1(\mathcal{O} \cap \mathbb{V}; \mathrm{Lin}(\mathbb{X})) \cap C^1(\mathcal{O} \cap \mathbb{W}; \mathrm{Lin}(\mathbb{W}))$.
	
	\item For each $u \in \mathcal{O} \cap \mathbb{V}$, $J(u)$ is skew-adjoint in the sense that
	\[
		\langle J(u)v, w \rangle = -\langle v, J(u)w \rangle
	\]
	for all $v, w \in \mathcal{D}(\widehat{J})$.
\end{enumerate}
\end{assumption}

We suppose that $\mathbb{V}$ is chosen so that $E \in C^3(\mathcal{O}\cap \mathbb{V}; \mathbb{R})$.  In addition, assume that there exists a momentum functional $P \in C^3(\mathcal{O} \cap \mathbb{V}; \mathbb{X})$, and that both it and the energy are conserved by solutions of \eqref{eqn Appendix abstract Hamiltonian}.

\begin{assumption}[Derivative extension] \label{Assumption derivative extension}
There exist mappings $\nabla E$, $\nabla P \in C^0(\mathcal{O} \cap \mathbb{V}; \mathbb{X}^*)$ such that $\nabla E(u)$ and $\nabla P(u)$ are extensions of $DE(u)$ and $DP(u)$, respectively, for every $u \in \mathcal{O} \cap \mathbb{V}$.
\end{assumption}

Suppose that there is a one-parameter family of affine maps $T(s): \mathbb{X} \to \mathbb{X}$, with the linear part $dT(s) := T(s)u - T(s)0$ having the properties:

\begin{assumption}[Symmetry group] \label{Assumption symmetry group}
The symmetry group $T(\cdot)$ satisfies the following.
\begin{enumerate} [label=(\roman*)]
	\item \emph{(Invariance)} The neighborhood $\mathcal{O}$, and the subspaces $\mathbb{V}$ and $\mathbb{W}$, are all invariant under the symmetry group.  Moreover, $I^{-1}\mathcal{D}(\widehat{J})$ is invariant under the \emph{linear} symmetry group, or equivalently, $\mathcal{D}(\widehat{J})$ is invariant under the adjoint $dT^*(s): \mathbb{X}^* \to \mathbb{X}^*$.
	
	\item \emph{(Flow property)} We have $T(0)=dT(0)=\mathrm{Id}_{\mathbb{X}}$, and for all $s, r \in \mathbb{R}$,
	\[
		T(s+r) = T(s)T(r), \quad \text{and hence} \quad dT(s+r)=dT(s)dT(r).
	\]
	
	\item \emph{(Unitary)} The linear part $dT(s)$ is a unitary operator on $\mathbb{X}$ for each $s \in \mathbb{R}$, or equivalently,
	\begin{equation} \label{eqn assumption Symmetry-unitary}
		dT^*(s)I = I \, dT(-s), \quad \text{for all } s \in \mathbb{R}.
	\end{equation}
	Moreover, the linear part is an isometry on the spaces $\mathbb{V}$ and $\mathbb{W}$.
	
	\item \emph{(Strong continuity)} The symmetry group is strongly continuous on both $\mathbb{X}$, $\mathbb{V}$, and $\mathbb{W}$.
	
	\item \emph{(Affine part)} The function $T(\cdot)0$ belongs to $C^3(\mathbb{R};\mathbb{W})$ and there exists an increasing function $w:[0,\infty) \to [0,\infty)$ such that
	\[
		\|T(s)0\|_{\mathbb{W}} \ne w(\|T(s)0\|_{\mathbb{X}}), \quad \text{for all } s \in \mathbb{R}.
	\]
	
	\item \emph{(Commutativity with $J$)} For all $s \in \mathbb{R}$,
	\begin{equation} \begin{gathered}
	\label{eqn assumption Symmetry-commutativity}
		\widehat{J}I \, dT(s) = dT(s) \widehat{J}I, \\
		dT(s) B(u) = B(T(s)u) dT(s), \quad \text{for all } u \in \mathcal{O} \cap \mathbb{V}.
	\end{gathered} \end{equation}
	
	\item \emph{(Infinitesimal generator)} The infinitesimal generator of $T$ is the affine mapping
	\[
		T'(0)u = \lim_{s \to 0} \left( s^{-1}(T(s)u-u) \right) = dT'(0) + T'(0)0,
	\]
	with dense domain $\mathcal{D}(T'(0)) \subset \mathbb{X}$ consisting of all $u \in \mathbb{X}$ such that the limit exists in $\mathbb{X}$.  Similarly, we may speak of the dense subspaces $\mathcal{D}(T'(0)|_{\mathbb{V}}) \subset \mathbb{V}$ and $\mathcal{D}(T'(0)|_{\mathbb{W}}) \subset \mathbb{W}$ on which the limit exists in $\mathbb{V}$ and $\mathbb{W}$, respectively.  We assume that $\nabla P(u) \in \mathcal{D}(\widehat{J})$ for every $u \in \mathcal{D}(T'(0)|_{\mathbb{V}}) \cap \mathcal{O}$, and that
	\begin{equation} \label{eqn assumtion Symmetry-infinitesimal generator 1}
		T'(0)u = J(u) \nabla P(u)
	\end{equation}
	for all such $u$.  Moreover, we assume that
	\begin{equation} \label{eqn assumtion Symmetry-infinitesimal generator 2}
		\widehat{J}I \, dT'(0) = dT'(0) \widehat{J}I.
	\end{equation}
	
	\item \emph{(Density)} The subspace
	\[
		\mathcal{D}(T'(0)|_{\mathbb{W}}) \cap \mathrm{Rng} \, \widehat{J}
	\]
	is dense in $\mathbb{X}$.
	
	\item \emph{(Conservation)} For all $u \in \mathcal{O} \cap \mathbb{V}$, the energy is conserved by flow of the symmetry group:
	\begin{equation} \label{eqn assumption Symmetry-conservation}
		E(u) = E(T(s)u), \quad \text{for all } s \in \mathbb{R}.
	\end{equation}
\end{enumerate}
\end{assumption}

We say $u \in C^1(\mathbb{R}; \mathcal{O} \cap \mathbb{W})$ is a \emph{bound state} of the Hamiltonian system \eqref{eqn Appendix abstract Hamiltonian} if $u$ is a solution of the form
\[
	u(t) = T(ct)U,
\]
for some $c \in \mathbb{R}$ and $U \in \mathcal{O} \cap \mathbb{W}$.

\begin{assumption}[Bound states] \label{Assumption bound states}
There exists a one-parameter family of bound state solutions $\{ U_c: c \in \mathcal{I} \}$ to the Hamiltonian system \eqref{eqn Appendix abstract Hamiltonian}.
\begin{enumerate} [label=(\roman*)]
	\item The mapping $c \in \mathcal{I} \mapsto U_c \in \mathcal{O} \cap \mathbb{W}$ is $C^1$.
	
	\item The non-degeneracy condition $T'(0)U_c \ne 0$ holds for every $c \in \mathcal{I}$.  Equivalently, $U_c$ is never a critical point of the momentum.
	
	\item For all $c \in \mathcal{I}$,
	\begin{equation} \label{eqn assumption bound states iii 1}
		U_c \in \mathcal{D}(T'''(0)) \cap \mathcal{D}(\widehat{J}IT'(0)),
	\end{equation}
	and
	\begin{equation} \label{eqn assumption bound states iii 2}
		\widehat{J}IT'(0)U_c \in \mathcal{D}(T'(0)|_{\mathbb{W}}).
	\end{equation}
	
	\item Either $s \mapsto T(s)U_c$ is periodic, or $\liminf_{|s| \to \infty} \|T(s)U_c - U_c\|_{\mathbb{X}} > 0$.
\end{enumerate}
\end{assumption}

Define $E_c(u) := E(u) - cP(u)$ to be the \emph{augmented Hamiltonian}.  Then we have the following assumption:

\begin{assumption}[Spectrum] \label{Assumption spectrum}
The mapping
\[
	u \in \mathbb{V} \mapsto \left\langle D^2 E_c(U_c)u, \cdot \right\rangle_{\mathbb{V}^* \times \mathbb{V}} \in \mathbb{V}^*
\]
extends uniquely to a bounded linear operator $H_c: \mathbb{X} \to \mathbb{X}^*$ with the following properties
\begin{enumerate}[label=(\roman*)]
	\item $I^{-1}H_c$ is self-adjoint on $\mathbb{X}$.
	
	\item The eigenvalues of $I^{-1}H_c$ satisfy
	\[
		\mathrm{spec}(I^{-1}H_c) = \{-\mu_c^2\} \cup \{0\} \cup \Sigma,
	\]
	where $-\mu_c^2 < 0$ is a simple eigenvalue corresponding to a unit eigenvector $\chi_c$, 0 is a simple eigenvalue generated by $T$, and $\Sigma$ is a subset of the positive real axis bounded away from 0.
\end{enumerate}
\end{assumption}

\begin{assumption}[Local existence] \label{Assumption local existence}
There exists $\nu_0 > 0$ and $t_0 > 0$ such that for all initial data
$u_0 \in U_{\nu_0}$, there exists a unique solution to the ODE \eqref{eqn Appendix abstract Hamiltonian} on the interval $[0, t_0)$.
\end{assumption}

Let $d(c) := E_c(U_c) = E(U_c) - cP(U_c)$ be the \emph{moment of instability}, where $U_c$ is a traveling wave.  Moreover, for each $\rho>0$, the tubular neighborhood of radius $\rho$ in $\mathbb{X}$ for the $U_c$-orbit generated by $T$ is
\begin{equation} \label{defn Appendix tubular neighborhood}
	\mathcal{U}_\rho := \left\{ u \in \mathcal{O}: \inf_{s\in\mathbb{R}} \|T(s)U_c-u\|_{\mathbb{W}} < \rho \right\},
\end{equation}
We have the following instability theorem; see \cite[Theorem 2.6]{Varholm_Wahlen_Walsh2018}.

\begin{thm} [Instability] \label{Therem VWW instability}
Suppose that all assumptions \ref{Assumption space}--\ref{Assumption local existence} are satisfied, and that there exists a family of traveling water waves $U_c$.  Then if $d''(c) < 0$, the traveling wave $U_c$ is orbitally unstable.  That is, there exists a $\nu_0 > 0$ such that for every $0 < \nu < \nu_0$ there exists initial data in $\mathcal{U}_{\nu}$ whose corresponding solution exits $\mathcal{U}_{\nu_0}$ in finite time.
\end{thm}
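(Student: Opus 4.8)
This is the abstract instability criterion of Varholm--Wahl\'en--Walsh, and I would prove it by carrying out the instability half of the Grillakis--Shatah--Strauss program in their functional-analytic setting; here is the plan. First, I would combine the spectral hypothesis of Assumption~\ref{Assumption spectrum} with $d''(c)<0$ to extract a \emph{constrained unstable direction}. Differentiating the bound-state identity $DE_c(U_c)=0$ in $c$ gives $H_c\,\partial_c U_c = I\nabla P(U_c)$, so that $\langle H_c\,\partial_c U_c,\partial_c U_c\rangle = \langle\nabla P(U_c),\partial_c U_c\rangle = -d''(c)>0$. Letting $\chi_c$ be the unit negative eigenvector from Assumption~\ref{Assumption spectrum} and correcting it by the appropriate multiple of $\partial_c U_c$, one obtains a vector $\psi$ with $\langle\nabla P(U_c),\psi\rangle=0$ and $\langle H_c\psi,\psi\rangle = -\mu_c^2 - \langle\nabla P(U_c),\chi_c\rangle^2/(-d''(c))<0$. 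Thus $E_c$, restricted to the momentum level set $\{P=P(U_c)\}$, has an indefinite Hessian at $U_c$; moreover, on the $\mathbb{X}$-orthogonal complement of $\mathrm{span}\{\psi,\,T'(0)U_c\}$ inside that level set, the coercivity $\langle H_c v,v\rangle\ge\delta\|v\|_{\mathbb{X}}^2$ survives.

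Next I would install a \emph{modulation chart} on the tubular neighborhood $\mathcal{U}_{\nu_0}$: by the implicit function theorem each $u\in\mathcal{U}_{\nu_0}$ is written uniquely as $u=T(s(u))(U_c+v(u))$ with $v(u)$ pinned by an orthogonality condition against $I\,dT'(0)T'(0)U_c$. Since solutions of \eqref{eqn Appendix abstract Hamiltonian} live only in $\mathbb{W}$ whereas $E$, $P$, $\nabla E$, $\nabla P$ are defined only at $\mathbb{V}$-regularity, every estimate in the modulation analysis must be threaded through the interpolation inequality \eqref{Inequality interpolation X V W}; this is precisely where the argument departs from classical GSS. With the chart in place I would construct the \emph{escape functional}: a functional $\mathcal{A}$ that is bounded on $\mathcal{U}_{\nu_0}$ --- morally $\mathcal{A}(u)=\langle\Lambda,u-U_c\rangle$ for a fixed $\Lambda$ obtained by applying an inverse of $\widehat{J}$ to the unstable direction, the inverse being legitimated by the density statement in Assumption~\ref{Assumption symmetry group}(viii) since $\widehat{J}$ has merely dense range --- whose derivative along any solution obeys $\tfrac{d}{dt}\mathcal{A}(u(t))\ge\kappa\sqrt{E_c(U_c)-E_c(u(t))}$ for as long as $u(t)\in\mathcal{U}_{\nu_0}$ and $E_c(u(t))<E_c(U_c)$.

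Finally I would select the unstable data and close the argument by convexity. Given $0<\nu<\nu_0$, take $u_0$ a small perturbation of $U_c$ in the direction $\psi$, corrected by a higher-order term (again via the implicit function theorem) so that $P(u_0)=P(U_c)$ exactly, and small enough that $u_0\in\mathcal{U}_{\nu}$ while $E_c(u_0)<E_c(U_c)$ --- possible precisely because $\langle H_c\psi,\psi\rangle<0$. By Assumption~\ref{Assumption local existence} the corresponding solution exists on some $[0,t_0)$; conservation of $E$ and $P$ forces $E_c(u(t))\equiv E_c(u_0)<E_c(U_c)$, so $E_c(U_c)-E_c(u(t))\ge c_0>0$, whence $\tfrac{d}{dt}\mathcal{A}(u(t))\ge\kappa\sqrt{c_0}>0$ as long as $u(t)\in\mathcal{U}_{\nu_0}$. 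Since $\mathcal{A}$ is bounded on $\mathcal{U}_{\nu_0}$, the trajectory must leave $\mathcal{U}_{\nu_0}$ in finite time, which is the asserted instability. The hard part will be the construction of $\mathcal{A}$ and the lower bound on $\tfrac{d}{dt}\mathcal{A}$: it requires simultaneously exploiting the constrained coercivity of $H_c$, the modulation estimates, and the $\mathbb{W}$-versus-$\mathbb{X}$ interpolation, and --- most delicately --- making rigorous sense of the ``$\widehat{J}^{-1}$'' appearing in $\mathcal{A}$ when $\widehat{J}$ is only densely defined with dense but not full range, which is exactly the technical core of the Varholm--Wahl\'en--Walsh refinement.
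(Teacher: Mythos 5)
This theorem is not proved in the paper: Appendix~\ref{Appendix Varholm--Wahlen--Walsh instability theory} only restates the hypotheses and quotes the result from \cite[Theorem 2.6]{Varholm_Wahlen_Walsh2018}, so there is no in-paper argument to compare against --- the ``proof'' here is a citation. Measured against the proof in that reference, your outline is structurally faithful: the constrained unstable direction $\psi$ built from $\chi_c$ and $\partial_c U_c$ is the correct GSS computation (using $\langle H_c\,\partial_c U_c,\partial_c U_c\rangle = \langle \nabla P(U_c),\partial_c U_c\rangle = -d''(c)>0$; note only that $H_c\,\partial_c U_c=\nabla P(U_c)$ already lies in $\mathbb{X}^*$, so the extra $I$ in your identity should not appear), and the modulation chart, the escape functional obtained by ``inverting'' $\widehat J$ on a dense set, and the exit-by-boundedness conclusion are exactly the ingredients of the Varholm--Wahl\'en--Walsh argument, including the two genuine departures from classical Grillakis--Shatah--Strauss that you correctly flag (merely dense range of $\widehat J$, and the $\mathbb{W}$-versus-$\mathbb{X}$ mismatch mediated by \eqref{Inequality interpolation X V W}).

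As a proof, however, this is a roadmap rather than an argument. The two steps you yourself identify as the technical core --- the rigorous construction of $\mathcal{A}$ when $\widehat J$ is densely defined with only dense range, and the differential inequality for $\frac{\mathrm{d}}{\mathrm{d}t}\mathcal{A}$ --- are asserted, not established, and they are where essentially all the work lies. Two smaller points to fix if you carry this out: the lower bound on $\frac{\mathrm{d}}{\mathrm{d}t}\mathcal{A}$ in the cited proof is linear in the energy deficit $E_c(U_c)-E_c(u(t))$ rather than a square root (either form yields a uniform positive lower bound once the deficit is fixed by conservation, but the derivations differ); and the claimed coercivity of $H_c$ on the complement of $\mathrm{span}\{\psi,\,T'(0)U_c\}$ inside the constraint tangent space uses the full strength of Assumption~\ref{Assumption spectrum} --- simplicity of both the negative and the zero eigenvalue and $\Sigma$ bounded away from $0$ --- which deserves an explicit statement, since it is precisely the hypothesis that fails for higher Morse index. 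If your intent is to cite rather than reprove, the citation is the right move; if it is to reprove, the plan is sound but incomplete.
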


\section{Steady and unsteady equations}
\label{Appendix steady and unsteady equations}

For the convenience of the reader, in this appendix we derive the nonlocal formulations for the water wave with a finite dipole problem \eqref{equation eta phi x_bar y_bar}.

Using the definitions of $\varphi$ in \eqref{defn varphi} and $\mathcal{G}(\eta)$ in \eqref{Dirichlet Neumann operator}, we obtain
\begin{equation} \label{relation Phi a1 and a2}
	\nabla \Phi = \frac{1}{\langle \eta' \rangle^2}
	\begin{pmatrix} 1 & -\eta' \\ \eta' & 1 \end{pmatrix}
	\begin{pmatrix}	\varphi' \\ \mathcal{G}(\eta) \varphi \end{pmatrix} =
	\frac{1}{\langle \eta' \rangle^2}
	\begin{pmatrix}	\varphi' - \eta' \mathcal{G}(\eta) \varphi \\ \eta' \varphi' + \mathcal{G}(\eta) \varphi \end{pmatrix}.
\end{equation}
Combining with the definitions of $\psi$ in \eqref{defn psi} gives
\begin{align} \label{relationship between varphi and psi}
	\begin{pmatrix} \mathcal{G}(\eta)\varphi \\ \varphi' \end{pmatrix} = \begin{pmatrix} \psi' \\ - \mathcal{G}(\eta)\psi \end{pmatrix}.
\end{align}
Then from the incompressible Euler equation \eqref{incompressible Euler eqn}, we can derive the \emph{unsteady equation for velocity potential} on $S$
\begin{multline} \label{unsteady velocity potential equation}
	\partial_t \varphi = -\frac{1}{2\langle\eta'\rangle^2} \left( (\varphi')^2 - 2\eta'\varphi'\mathcal{G}(\eta)\varphi - (\mathcal{G}(\eta)\varphi)^2 \right) - \epsilon \partial_t \Theta + \epsilon \varphi' \partial_{x_2}\Gamma - \frac{\epsilon^2}{2} |\nabla\Gamma|^2 \\
	- \eta + b \frac{\eta''}{\langle \eta' \rangle^3}.
\end{multline}
Using the relation \eqref{relationship between varphi and psi}, we also have the \emph{unsteady equation for stream function} on $S$:
\begin{multline} \label{unsteady stream function equation}
	\partial_t \varphi = -\frac{1}{2\langle\eta'\rangle^2} \left( (\mathcal{G}(\eta)\psi)^2 + 2\eta'\psi'\mathcal{G}(\eta)\psi - (\psi')^2 \right) - \epsilon \partial_t \Theta - \epsilon \mathcal{G}(\eta)\psi \, \partial_{x_2}\Gamma - \frac{\epsilon^2}{2} |\nabla\Gamma|^2 \\
	- \eta + b \frac{\eta''}{\langle \eta' \rangle^3}.
\end{multline}
For the traveling water waves, the \emph{steady equation for velocity potential} on $S$ is
\begin{multline} \label{steady equation varphi veloctiy potential}
	-\frac{c}{\langle\eta'\rangle^2} (\varphi' - \eta' \mathcal{G}(\eta)\varphi) + c \epsilon \partial_{x_2}\Gamma + \frac{1}{2\langle\eta'\rangle^2} \left[ (\varphi')^2 + (\mathcal{G}(\eta)\varphi)^2 \right] \\
	+ \frac{\epsilon}{\langle\eta'\rangle^2} \left[ -\varphi'\nabla_\perp\Gamma + \mathcal{G}(\eta)\varphi \; \nabla_\top\Gamma \right] + \frac{\epsilon^2}{2} |\nabla\Gamma|^2 + \eta - b \frac{\eta''}{\langle \eta' \rangle^3} = 0,
\end{multline}
and the \emph{steady equation for stream function} on $S$ is:
\begin{multline} \label{steady equation psi stream function}
	\frac{c}{\langle\eta'\rangle^2} (\psi' + \eta' \mathcal{G}(\eta)\psi) + c \epsilon \partial_{x_2}\Gamma + \frac{1}{2\langle\eta'\rangle^2} \left[ (\psi')^2 + (\mathcal{G}(\eta)\psi)^2 \right] \\
	+ \frac{\epsilon}{\langle\eta'\rangle^2} \left[ \mathcal{G}(\eta)\psi \nabla_\perp\Gamma + \psi' \; \nabla_\top\Gamma \right]	+ \frac{\epsilon^2}{2} |\nabla\Gamma|^2 + \eta - b \frac{\eta''}{\langle \eta' \rangle^3} = 0.
\end{multline}

\section{Variations of the energy and momentum}
\label{Appendix variations of E and P}

Finally, in this appendix we record the first and second Fr\'echet derivatives of the energy and momentum.  

Recall that 
\[
	\mathfrak{a}=(\nabla(\mathcal{H}\varphi))|_{S_t}, \quad  \xi=(\Upsilon_{1_{x_1}},\Xi_{1_{x_2}})^T, \quad \textrm{and} \quad  \zeta=(\Upsilon_{2_{x_1}},\Xi_{2_{x_2}})^T.
\]
Let $\nabla\xi:=(\Upsilon_{1_{x_1 x_1}},\Xi_{1_{x_2 x_2}})^T$, $\nabla\zeta:=(\Upsilon_{2_{x_1 x_1}},\Xi_{2_{x_2 x_2}})^T$, and
\[
	D^2_{\bar x} \Theta := 
	\begin{pmatrix} 
		\Upsilon_{1_{x_1 x_1}} & \Xi_{1_{x_1 x_2}} \\
		\Xi_{1_{x_1 x_2}}      & \Upsilon_{1_{x_2 x_2}}
	\end{pmatrix},
	\quad \mathrm{and} \quad
	D^2_{\bar y} \Theta := 
	\begin{pmatrix} 
	\Upsilon_{2_{x_1 x_1}} & \Xi_{2_{x_1 x_2}} \\
	\Xi_{2_{x_1 x_2}}      & \Upsilon_{2_{x_2 x_2}}
	\end{pmatrix}.
\]

\subsection*{Variations of $K_0(u)$}

We compute that
\[
	D_\varphi K_0(u) \dot{\varphi} = \int_\mathbb{R} \dot{\varphi} \mathcal{G}(\eta)\varphi \,\mathrm{d}x_1, \quad
	D_\eta K_0(u) \dot{\eta} = \frac{1}{2} \int_\mathbb{R} \varphi \langle D_\eta\mathcal{G}(\eta)\dot{\eta}, \varphi \rangle \,\mathrm{d}x_1,
\]
and 
\[
	\langle D_\varphi^2 K_0(u)\dot{\varphi}, \dot{\varphi} \rangle = \int_\mathbb{R} \dot{\varphi}\mathcal{G}(\eta)\dot{\varphi} \,\mathrm{d}x_1,
\]
\[
	\langle D_\varphi D_\eta K_0(u)\dot{\varphi}, \dot{\eta} \rangle = \int_\mathbb{R} \dot{\varphi} \langle D_\eta \mathcal{G}(\eta)\dot{\eta},\varphi \rangle \,\mathrm{d}x_1 = \int_{\mathbb{R}} \dot{\eta} (\mathfrak{a}_1 \dot{\varphi}' - \mathfrak{a}_2 \mathcal{G}(\eta)\dot{\varphi}) \,\mathrm{d}x_1
\]
\[
	\langle D_\eta^2 K_0(u)\dot{\eta}, \dot{\eta} \rangle = \frac{1}{2} \int_\mathbb{R} \varphi \langle \langle D_\eta^2 \mathcal{G}(\eta)\dot{\eta}, \dot{\eta} \rangle, \varphi \rangle \,\mathrm{d}x_1 = \int_{\mathbb{R}} (\mathfrak{a}_1'\mathfrak{a}_2\dot{\eta}^2 + \mathfrak{a}_2\dot{\eta}\mathcal{G}(\eta)(\mathfrak{a}_2\dot{\eta})) \,\mathrm{d}x_1.
\]

\subsection*{Variations of $K_1(u)$}
Likewise, the first variations of $K_1$ are
\begin{align*}
	D_\varphi K_1(u) \dot{\varphi} = \int_\mathbb{R} \dot{\varphi} \nabla_\perp \Theta \,\mathrm{d}x_1, \qquad
	D_\eta K_1(u) \dot{\eta} = \int_\mathbb{R} \dot\eta \varphi' \Theta_{x_1}|_S \,\mathrm{d}x_1,
\end{align*}
\begin{align*}
	\nabla_{\bar x} K_1(u) = -\int_\mathbb{R} \varphi \nabla_\perp \xi \,\mathrm{d}x_1,
	\qquad
	\nabla_{\bar y} K_1(u) = -\int_\mathbb{R} \varphi \nabla_\perp \zeta \,\mathrm{d}x_1,
\end{align*}
and the second are given by
\[
	\langle D_\varphi D_\eta K_1(u)\dot{\eta}, \dot{\varphi} \rangle = \int_\mathbb{R} \dot{\eta}\dot{\varphi}' \Theta_{x_1}|_S \,\mathrm{d}x_1, \qquad
	\langle D^2_\eta K_1(u)\dot{\eta}, \dot{\eta} \rangle = \int_\mathbb{R} \dot{\eta}^2 \varphi' \Theta_{x_1 x_2}|_S \,\mathrm{d}x_1,
\]
\[
	D^2_{\bar x} K_1(u) = \int_{\mathbb{R}} \varphi \nabla_\perp D^2_{\bar x}\Theta \,\mathrm{d}x_1, \qquad
	D^2_{\bar y} K_1(u) = \int_{\mathbb{R}} \varphi \nabla_\perp D^2_{\bar y}\Theta \,\mathrm{d}x_1,
\]
\[
	\nabla_{\bar x} D_{\eta} K_1(u) \dot{\eta} = -\int_{\mathbb{R}} \dot{\eta}\varphi' (\nabla\xi)|_S \,\mathrm{d}x_1,
	\qquad
	\nabla_{\bar y} D_{\eta} K_1(u) \dot{\eta} = -\int_{\mathbb{R}} \dot{\eta}\varphi' (\nabla\zeta)|_S \,\mathrm{d}x_1,
\]
\[
	\nabla_{\bar x} D_\varphi K_1(u)\dot{\varphi} = -\int_\mathbb{R} \dot{\varphi} \nabla_\perp \xi \,\mathrm{d}x_1, 
	\qquad
	\nabla_{\bar y} D_\varphi K_1(u)\dot{\varphi} = -\int_\mathbb{R} \dot{\varphi} \nabla_\perp \zeta \,\mathrm{d}x_1.
\]

\subsection*{Variations of $K_2(u)$}
It is straightforward to compute that 
\begin{align*}
	D_\eta K_2(u)\dot{\eta} &= \frac{1}{2} \int_\mathbb{R} \dot{\eta} |(\nabla\Theta)|_S|^2 \,\mathrm{d}x_1,
\end{align*}
\[
	\nabla_{\bar x} K_2(u) = \nabla_{\bar x}\Gamma^* - \frac{1}{2} \int_{\mathbb{R}} \nabla_\perp(\xi\Theta) \,\mathrm{d}x_1, \quad
	\nabla_{\bar y} K_2(u) = \nabla_{\bar y}\Gamma^* - \frac{1}{2} \int_{\mathbb{R}} \nabla_\perp(\zeta\Theta) \,\mathrm{d}x_1, 
\]
and
\[
	\langle D^2_\eta K_2(u)\dot{\eta},\dot{\eta} \rangle = \int_\mathbb{R} \dot{\eta}^2 \Big( \Theta_{x_1}\Theta_{x_1 x_2} + \Theta_{x_2}\Theta_{x_2 x_2} \Big)\Big|_S \,\mathrm{d}x_1,
\]
\[
	D^2_{\bar x} K_2(u) = 2D^2_{\bar x} \Gamma^* + \frac{1}{2} \int_{\mathbb{R}} \nabla_\perp (\Theta D^2_{\bar x}\Theta + \xi\xi^T) \,\mathrm{d}x_1,
\]
\[
	D^2_{\bar y} K_2(u) = 2D^2_{\bar y} \Gamma^* + \frac{1}{2} \int_{\mathbb{R}} \nabla_\perp (\Theta D^2_{\bar y}\Theta + \zeta\zeta^T) \,\mathrm{d}x_1,
\]
\[
	\nabla_{\bar x}\nabla_{\bar y} K_2(u) = \nabla_{\bar x}\nabla_{\bar y} \Gamma^* + \frac{1}{2} \int_{\mathbb{R}} \nabla_\perp(\xi \odot \zeta) \,\mathrm{d}x_1,
\]
\[
	\nabla_{\bar x} D_\eta K_2(u)\dot\eta = -\int_{\mathbb{R}} \dot\eta ((D_x \xi)\nabla\Theta)|_S \,\mathrm{d}x_1, \quad
	\nabla_{\bar y} D_\eta K_2(u)\dot\eta = -\int_{\mathbb{R}} \dot\eta ((D_x \zeta)\nabla\Theta)|_S \,\mathrm{d}x_1.
\]

\subsection*{Variations of $V(u)$}
Similarly, we find that 
\begin{align*}
	D_\eta V(u)\dot{\eta} &= \int_\mathbb{R} \dot{\eta} \left( g\eta - b \frac{\eta''}{\langle\eta'\rangle^3} \right) \,\mathrm{d}x_1, \\
	\langle D_{\eta}^2 V(u)\dot{\eta}, \hat{\eta} \rangle &= \int_{\mathbb{R}} \left( g\hat{\eta}\dot{\eta} + \frac{b}{\langle\eta'\rangle^3} \hat{\eta}'\dot{\eta}' \right) \,\mathrm{d}x_1.
\end{align*}

\subsection*{Variations of $P(u)$}

Finally, the first variations of momentum $P(u)$ are given in Section~\ref{subsection Hamiltonian formulation}.  The second derivatives are as follows:
\[
	\langle D_\eta D_\varphi P(u)\dot{\varphi},\dot{\eta} \rangle = -\int_\mathbb{R} \dot{\eta}'\dot{\varphi} \,\mathrm{d}x_1, \qquad
	\langle D^2_\eta P(u)\dot{\eta},\dot{\eta} \rangle = \epsilon \int_\mathbb{R} \dot{\eta}^2 \Theta_{x_1 x_2}|_S \,\mathrm{d}x_1,
\]
\[
	D^2_{\bar x} P(u) = -\epsilon\int_{\mathbb{R}} \eta' (D^2_{\bar x} \Theta)|_S \,\mathrm{d}x_1, \qquad
	D^2_{\bar y} P(u) = -\epsilon\int_{\mathbb{R}} \eta' (D^2_{\bar y} \Theta)|_S \,\mathrm{d}x_1,
\]
\[
	\nabla_{\bar x} D_\eta P(u)\dot{\eta} = -\epsilon\int_\mathbb{R} \dot{\eta} (\nabla\xi)|_S \,\mathrm{d}x_1, \qquad
	\nabla_{\bar y} D_\eta P(u)\dot{\eta} = -\epsilon\int_\mathbb{R} \dot{\eta} (\nabla\zeta)|_S \,\mathrm{d}x_1.
\]

\bibliography{Le_references}
\bibliographystyle{plain}

\end{document}